\newtheorem{theorem}{Theorem}[section]
\newtheorem{lemma}[theorem]{Lemma}
\newtheorem{corollary}[theorem]{Corollary}
\newtheorem{example}[theorem]{Example}
\newtheorem{remark}[theorem]{Remark}
\newcommand{\imp}{\mathbin{\rightarrow}}
\begin{document}

\title{Strong standard completeness theorems for S5-modal \L ukasiewicz logics}

\author{D. Castaño, J. P. Díaz Varela, G. Savoy}

\maketitle

\begin{abstract}
We study the S5-modal expansion of the logic based on the \L ukasiewicz t-norm. We exhibit a finitary propositional calculus and show that it is finitely strongly complete with respect to this logic. This propositional calculus is then expanded with an infinitary rule to achieve strong completeness. These results are derived from properties of monadic MV-algebras: functional representations of simple and finitely subdirectly irreducible algebras, as well as the finite embeddability property. We also show similar completeness theorems for the extension of the logic based on models with bounded universe.
\end{abstract}

\section{Introduction}

In \cite{Hajek98} Hájek introduced an S5-modal expansion of any axiomatic extension $\mathcal{C}$ of his Basic Logic which is equivalent to the one-variable monadic fragment of the first-order extension $\mathcal{C}\forall$ of $\mathcal{C}$. We present next a slight generalization of his definition. Let $Prop$ be a countably infinite set of propositional variables, and let $Fm$ be the set of formulas built from $Prop$ in the language of Basic Logic expanded with two unary connectives $\square$ and $\lozenge$. Consider a class $\mathbb{C}$ of totally ordered BL-algebras. To interpret the formulas in $Fm$, consider triples $\mathbf{K} := \langle X,e,\mathbf{A} \rangle$ where $X$ is a non-empty set, $\mathbf{A} \in \mathbb{C}$, and $e\colon X \times Prop \to A$ is a function. The {\em truth value} $\|\varphi\|_{\mathbf{K},x}$ of a formula $\varphi$ in $\mathbf{K}$ at a point $x \in X$ is defined by recursion. For propositional variables $p \in Prop$ put $\|p\|_{\mathbf{K},x} := e(x,p)$. The definition of the truth value is then extended for the logical connectives in the language of Basic Logic in the usual way, and for the new unary connectives by
\[
\|\square \psi\|_{\mathbf{K},x} := \inf_{x' \in X} \|\psi\|_{\mathbf{K},x'}, \quad \text{ and } \quad \|\lozenge \psi\|_{\mathbf{K},x} := \sup_{x' \in X} \|\psi\|_{\mathbf{K},x'}.
\]
Note that the infima and suprema above may not exist in general in $\mathbf{A}$; hence, we restrict our attention to {\em safe} structures, that is, structures $\mathbf{K}$ for which $\|\varphi\|_{\mathbf{K},x}$ is defined for every $\varphi \in Fm$ at every point $x$. Given a $\Gamma \subseteq Fm$, we say that a safe structure $\mathbf{K}$ is a {\em model} of $\Gamma$ if $\|\varphi\|_{\mathbf{K},x} = 1$ for every $x \in X$ and $\varphi \in \Gamma$. For a set of formulas $\Gamma \cup \{\varphi\}$ we write $\Gamma \vDash_{\mathrm{S5}(\mathbb{C})} \varphi$ if every model of $\Gamma$ is also a model of $\varphi$. The logic thus defined depends on the class $\mathbb{C}$ and is denoted by $\mathrm{S5}(\mathbb{C})$. In case $\mathbb{C}$ is the class of totally ordered $\mathcal{C}$-algebras corresponding to an axiomatic extension $\mathcal{C}$ of Basic Logic we get the original definition given by Hájek; this logic was denoted by $\mathrm{S5}(\mathcal{C})$, but we reserve this notation for a related logic defined by means of an axiomatic system. 

In this article we are only interested in expansions of the infinite-valued \L ukasiewicz logic, which we denote by $\mathcal{L}$. Recall that the equivalent algebraic semantics of $\mathcal{L}$ is the variety $\mathbb{MV}$ of MV-algebras. We write $\mathbb{MV}_\mathrm{to}$ for the class of totally ordered MV-algebras. Thus, $\mathrm{S5}(\mathbb{MV}_\mathrm{to})$ is the S5-modal expansion of $\mathcal{L}$ defined by Hájek. Consider now the logic $\mathrm{S5}(\mathcal{L})$ on the same language as $\mathrm{S5}(\mathbb{MV}_\mathrm{to})$ defined by the following axiomatic system:
\begin{itemize} \setlength{\itemsep}{0pt}
\item Axioms:
\begin{itemize}
\item[] Instantiations of axiom-schemata of $\mathcal{L}$
\item[] $\square \varphi \imp \varphi$
\item[] $\varphi \imp \lozenge \varphi$
\item[] $\square(\nu \imp \varphi) \imp (\nu \imp \square \varphi)$
\item[] $\square(\varphi \imp \nu) \imp (\lozenge \varphi \imp \nu)$
\item[] $\square(\varphi \vee \nu) \imp (\square \varphi \vee \nu)$
\item[] $\lozenge (\varphi * \varphi) \equiv (\lozenge \varphi) * (\lozenge \varphi)$
\end{itemize}
where $\varphi$ is any formula, $\nu$ is any propositional combination of formulas beginning with $\square$ or $\lozenge$, and $\alpha \equiv \beta$ abbreviates $(\alpha \imp \beta) \wedge (\beta \imp \alpha)$.
\item Rules of inference:
\begin{itemize}
\item[] Modus Ponens: $\displaystyle \underline{\varphi, \varphi \imp \psi} \atop \displaystyle {\psi}$
\item[] Necessitation: $\displaystyle \underline{\phantom{..}\varphi\phantom{..}} \atop \displaystyle  {\square \varphi}$
\end{itemize}
\end{itemize}
In \cite{CCDVR21} the authors show a strong completeness theorem stating that $\mathrm{S5}(\mathbb{MV}_\mathrm{to}) = \mathrm{S5}(\mathcal{L})$. In this article we study the logic $\mathrm{S5}([0,1]_\textrm{\L})$ where $[0,1]_\textrm{\L}$ is the standard \L ukasiewicz t-norm on the unit real interval (of course, $\mathrm{S5}([0,1]_\textrm{\L})$ is a shorthand for $\mathrm{S5}(\{[0,1]_\textrm{\L}\})$). Note that $\mathrm{S5}([0,1]_\textrm{\L})$ is not finitary since it is a conservative expansion of the logic of $[0,1]_\textrm{\L}$, which is not finitary. Thus, a strong completeness theorem for $\mathrm{S5}(\mathcal{L})$ with respect to $\mathrm{S5}([0,1]_\textrm{\L})$ is not possible. We will show, however, that a finite strong completeness theorem does hold (Theorem \ref{TEO: finite strong completeness - general case}); in other words, $\mathrm{S5}(\mathcal{L})$ is the finitary companion of $\mathrm{S5}([0,1]_\textrm{\L})$. Weak completeness had already been proved by Rutledge in \cite{Rutledge59}. Our result generalizes Rutledge's and is obtained more directly.

In addition, we will show that adding one infinitary rule to the axiomatic system defining $\mathrm{S5}(\mathcal{L})$ is enough to obtain a logic $\mathrm{S5}(\mathcal{L})_\infty$ strongly complete with respect to $\mathrm{S5}([0,1]_\textrm{\L})$ (Theorem \ref{TEO: completitud estandar infinitaria}). This had already been shown for the propositional and first-order cases in \cite{Montagna07}. We follow the ideas in \cite{Kulacka18} and provide an adequate algebraic representation for simple algebras needed to obtain the monadic completeness theorem.

An interesting extension of $\mathrm{S5}(\mathbb{C})$ can be obtained by considering only safe structures $\mathbf{K} = (X,e,\mathbf{A})$ where $X$ has at most $k$ elements for a fixed positive integer $k$. We denote this logic by $\mathrm{S5}_k(\mathbb{C})$. We will show that an axiomatic extension of $\mathrm{S5}(\mathcal{L})$ by one axiom schema is finitely strongly complete with respect to $\mathrm{S5}_k([0,1]_\textrm{\L})$ (Theorem \ref{TEO: finite strong completeness - width k}). Moreover, the axiomatic extension of $\mathrm{S5}(\mathcal{L})_\infty$ obtained by adding the same axiom schema is strongly complete with respect to $\mathrm{S5}_k([0,1]_\textrm{\L})$ (Theorem \ref{TEO: strong standard completenes - width k}).

We use an algebraic method to prove the completeness results stated in the previous paragraphs. The representation theorems and properties that we prove here for monadic MV-algebras are also interesting in their own right since they improve our understanding of these structures.

We assume familiarity with the structural properties of MV-algebras. Most general facts about MV-algebras can be found in \cite{CDM00}. An important fact that we use is that the class of totally ordered MV-algebras has the amalgamation property. This is a direct consequence of the amalgamation property for totally ordered Abelian $\ell$-groups (see \cite{Peirce72}) by means of Mundici's functor. Another important result about totally ordered MV-algebras is that they enjoy the finite embeddability property. A proof in the context of Wajsberg hoops and a discussion of other proofs can be found in \cite{BF00}.

\section{Finitary S5-modal logics based on $[0,1]_\text{\rm \L}$}

\subsection{Algebraic results}

In this section we show most of the algebraic results needed to obtain the completeness theorems alluded to in the introduction. We start by reviewing the definition of monadic MV-algebras and their basic properties. Then we show a functional representation theorem for finitely subdirectly irreducible algebras and prove that this class of algebras has the finite embeddability property. Finally, we exhibit families of algebras that generate this class as a quasivariety; this leads directly to the main completeness result in the next section.

\subsubsection*{Basic definitions and results}

Monadic MV-algebras are MV-algebras endowed with two unary operations $\forall$ and $\exists$ that satisfy the following identities:
\begin{enumerate}[(M1)] \itemsep0em
\item\label{M1} $\forall x\imp x\approx 1$.
\item\label{M2} $\forall ( x\imp \forall y)\approx \exists x\imp \forall y$.
\item \label{M3}$\forall (\forall x\imp y)\approx \forall x\imp \forall y$.
\item\label{M4} $\forall ( \exists x\vee y)\approx \exists x\vee \forall y$.
\item\label{M5} $\exists (x*x)\approx \exists x*\exists x$.
\end{enumerate}
The previous definition is equivalent to the original one, given by Rutledge in \cite{Rutledge59}; see \cite{CDV14}. We denote by $\mathbb{MMV}$ the variety of monadic MV-algebras. This variety is the equivalent algebraic semantics of $\mathrm{S5}(\mathcal{L})$ (see \cite{CCDVR21}). We stick to the tradition of using $\forall$ and $\exists$ for monadic algebras; in contrast, we retain $\square$ and $\lozenge$ for logics. We usually write algebras in $\mathbb{MMV}$ as $\langle \mathbf{A}, \exists, \forall\rangle$, where we assume $\mathbf{A} \in \mathbb{MV}$.

\begin{example} \rm \label{EJ: canonical algebras}
Let $X$ be a non-empty set and let $[0,1]_\textrm{\rm \L}^X$ be the MV-algebra of functions from $X$ into the \L ukasiewicz t-norm $[0,1]_\textrm{\rm \L}$. Define $\exists_\vee, \forall_\wedge\colon [0,1]_\textrm{\rm \L}^X \to [0,1]_\textrm{\rm \L}^X$ in the following way
\[
\exists_\vee(f)(x) := \bigvee \{f(y): y \in X\} \qquad \text{ and }  \qquad \forall_\wedge(f)(x) := \bigwedge \{f(y): y \in X\}
\]
for $f \in [0,1]_\text{\rm \L}^X$ and $x \in X$. Note that $\exists_\vee(f)$ and $\forall_\wedge(f)$ are constant maps for each $f \in [0,1]_\text{\rm \L}^X$. The structure $\langle [0,1]_\text{\rm \L}^X, \exists_\vee, \forall_\wedge\rangle$ is a monadic MV-algebra. Note that, if $|X| = |Y|$, then $\langle [0,1]_\text{\rm \L}^X, \exists_\vee, \forall_\wedge\rangle$ is isomorphic to $\langle [0,1]_\text{\rm \L}^Y, \exists_\vee, \forall_\wedge\rangle$ through the obvious bijection. Also, if $|X| \leq |Y|$, there is an embedding from $\langle [0,1]_\text{\rm \L}^X, \exists_\vee, \forall_\wedge\rangle$ into $\langle [0,1]_\text{\rm \L}^Y, \exists_\vee, \forall_\wedge\rangle$. Indeed, without loss of generality assume $X \subseteq Y$ and fix $x_0 \in X$. Now map each $f\colon X \to [0,1]$ to the function $f'\colon Y \to [0,1]$ such that $f'(x) = f(x)$ for all $x \in X$ and $f'(y) = f(x_0)$ for all $y \in Y \setminus X$.

If $|X| = k$ is finite, we simply write $\langle [0,1]_\text{\rm \L}^k, \exists_\vee, \forall_\wedge\rangle$. If $X$ is infinite, $\langle [0,1]_\text{\rm \L}^X, \exists_\vee, \forall_\wedge\rangle$ generates $\mathbb{MMV}$ as a variety (see \cite{Rutledge59} or \cite{CCDVR21}). Moreover, we show below that the least quasi-variety that contains $\langle [0,1]_\text{\rm \L}^X, \exists_\vee, \forall_\wedge\rangle$ is also $\mathbb{MMV}$.

We can also define the monadic MV-algebra $\langle \mathbf{L}_m^n, \exists_\vee, \forall_\wedge\rangle$ where $\mathbf{L}_m$ is the $(m+1)$-element MV-chain and $\exists_\vee, \forall_\wedge$ are defined as above. Clearly $\langle \mathbf{L}_m^n, \exists_\vee, \forall_\wedge\rangle$ embeds into $\langle [0,1]_\text{\rm \L}^n, \exists_\vee,\forall_\wedge\rangle$.
\end{example}

The following lemma contains some structural properties of monadic MV-algebras. These properties are true for monadic BL-algebras (of which monadic MV-algebras are a special case); proofs can be found in \cite{CCDVR17}.

\begin{lemma} \label{LEMA: props basicas}
Let $\langle \mathbf{A}, \exists, \forall\rangle$ be a monadic MV-algebra. Then:
\begin{enumerate}[$(1)$]  \itemsep0em
\item $\exists A = \forall A$;
\item $\exists \mathbf{A}$ is a subalgebra of $\mathbf{A}$;
\item $\exists a = \min\{c \in \exists A: c \geq a\}$ and $\forall a = \max\{c \in \exists A: c \leq a\}$ for every $a \in A$;
\item the lattices of congruences of $\langle \mathbf{A}, \exists, \forall\rangle$ and $\exists \mathbf{A}$ are isomorphic;
\item $\langle \mathbf{A}, \exists, \forall\rangle$ is FSI if and only if $\exists \mathbf{A}$ is totally ordered;
\item $\langle \mathbf{A}, \exists, \forall\rangle$ is subdirectly irreducible if and only if $\exists \mathbf{A}$ is subdirectly irreducible;
\item $\langle \mathbf{A}, \exists, \forall\rangle$ is simple if and only if $\exists \mathbf{A}$ is simple.
\end{enumerate}
\end{lemma}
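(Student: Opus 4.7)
The plan is to derive items (1)--(3) by direct computation from (M1)--(M5), establish the congruence-correspondence (4), and then obtain (5)--(7) as immediate consequences.

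For (1)--(2), I would first extract the auxiliary identities $\forall\forall x \approx \forall x$, $\exists\exists x \approx \exists x$, $\forall\exists x \approx \exists x$, $\exists\forall x \approx \forall x$ and the De Morgan-type relation $\exists x \approx \neg\forall\neg x$ from (M1)--(M4), by suitable instantiations. For example, (M1) gives $\forall 0 \approx 0$, and feeding $y = 0$ into (M2) yields $\forall \neg x \approx \neg \exists x$; (M3) with $y = b$ easily gives monotonicity of $\forall$. These identities show that $\forall A = \exists A$ and that this set is the common fixed-point set of $\forall$ and $\exists$, proving (1); for (2), closure of this set under $\imp$ is then a direct application of (M3) to $a = \forall a$ and $b = \forall b$, closure under $\vee$ follows from (M4), and the remaining MV-operations are covered since they are definable from $\imp$ and $0$. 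Item (3) follows at once: $a \leq \exists a$ is a consequence of the De Morgan relation and (M1), $\exists a \in \exists A$ by idempotence, and if $c \in \exists A$ with $c \geq a$ then $c = \exists c \geq \exists a$ by monotonicity of $\exists$; the statement for $\forall$ is dual.

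For (4), I would exhibit mutually inverse order-preserving maps between the congruence lattices of $\langle \mathbf{A}, \exists, \forall\rangle$ and $\exists \mathbf{A}$. Restriction $\theta \mapsto \theta \cap (\exists A)^2$ gives one direction. In the converse direction, given an MV-ideal $I$ of $\exists \mathbf{A}$, I lift to $J := \exists^{-1}(I) \subseteq A$ and verify that $J$ is an MV-ideal of $\mathbf{A}$ closed under both quantifiers (so the induced MV-congruence is monadic); this uses monotonicity of $\exists$ and the inequality $\exists(a \oplus b) \leq \exists a \oplus \exists b$, both derivable from the axioms. The two passages are mutually inverse thanks to the minimality characterization in (3).

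Items (5)--(7) then follow formally: FSI, SI and simple are properties of the congruence lattice, which is transported by the isomorphism in (4); for (5) one additionally uses the standard characterization of FSI MV-algebras as totally ordered. The main obstacle is (4): its cleanest formulation requires first establishing the fixed-point identities of the second paragraph, and then carefully verifying that $J = \exists^{-1}(I)$ behaves as expected under $\oplus$ and the quantifiers. The computations in (1)--(3) are elementary but call for some experimentation to pin down the right instantiations of (M2) and (M3).
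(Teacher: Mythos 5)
The paper does not actually prove this lemma: it delegates entirely to the reference \cite{CCDVR17}, where these facts are established for the more general class of monadic BL-algebras. Your outline is essentially that standard argument, and it is sound. The instantiations you propose do work: (M1) gives $\forall 0 = 0$, (M2) with $y=0$ gives $\forall\neg x = \neg\exists x$ and hence the De Morgan relation, (M1) and (M3) together give $\forall 1 = 1$ and monotonicity, and the fixed-point identities follow from (M2) with $x=1$, (M4) with $y=0$, and (M3) with $y=0$; closure of $\forall A$ under $\imp$ via (M3) then yields (2), and the min/max characterization in (3) is immediate. The one place where your sketch is thinner than the verification actually required is in (4): knowing that $J=\exists^{-1}(I)$ is an MV-ideal closed under both quantifiers is not yet the same as knowing that its associated MV-congruence is compatible with $\exists$ and $\forall$; for that you need inequalities of the form $d(\exists a,\exists b)\leq \exists d(a,b)$ and $d(\forall a,\forall b)\leq \exists d(a,b)$ for the Chang distance $d$, which take a short computation (the characterization in (3) makes them easy: both sides compare against an element of $\exists A$ dominating $d(a,b)$). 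With that point made explicit, items (5)--(7) do follow formally from the lattice isomorphism together with the classification of FSI, subdirectly irreducible and simple MV-algebras, exactly as you say.
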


The next lemma includes several arithmetical properties that are used constantly throughout the paper. Again these properties are also true for monadic BL-algebras (see \cite{CCDVR17}).

\begin{lemma}
Let $\langle \mathbf{A}, \exists, \forall\rangle$ be a monadic MV-algebra. Then, for any $a,b \in A$ and $c \in \exists A$:
\begin{multicols}{2}
\begin{enumerate}[$(1)$]  \itemsep0em
\item $\forall 1 = \exists 1 = 1$ and $\forall 0 = \exists 0 = 0$;
\item $\forall c = \exists c = c$;
\item $\forall a \leq a \leq \exists a$;
\item if $a \leq b$, then $\forall a \leq \forall b$ and $\exists a \leq \exists b$;
\item $\forall (a \vee c) = \forall a \vee c$;
\item $\exists (a \vee b) = \exists a \vee \exists b$;
\item $\forall (a \wedge b) = \forall a \wedge \forall b$;
\item $\exists (a \wedge c) = \exists a \wedge c$;
\item $\forall (a \imp c) = \exists a \imp c$;
\item $\exists (a \imp c) \leq \forall a \imp c$;
\item $\forall (c \imp a) = c \imp \forall a$;
\item $\exists (c \imp a) \leq c \imp \exists a$;
\item $\forall \neg a = \neg \exists a$;
\item $\exists \neg a \leq \neg \forall a$.
\end{enumerate}
\end{multicols}
\end{lemma}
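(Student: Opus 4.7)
The plan is to lean on the characterization from Lemma \ref{LEMA: props basicas}(3): $\forall a = \max\{c \in \exists A : c \leq a\}$ and $\exists a = \min\{c \in \exists A : c \geq a\}$, together with the fact that $\exists A$ is a subalgebra (Lemma \ref{LEMA: props basicas}(2)). Items $(1)$--$(4)$ fall out immediately: $0,1 \in \exists A$ because $\exists A$ is a subalgebra, giving $(1)$; any $c \in \exists A$ is its own extremal approximant, giving $(2)$; the extremal definitions yield $\forall a \leq a \leq \exists a$ and the order-preservation of $\forall,\exists$.

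For the identities matching the axioms, I would derive $(5)$, $(9)$, $(11)$, and $(13)$ by rewriting the axioms using $\exists A = \forall A$. Specifically, since every $c \in \exists A$ can be written as $c = \exists c = \forall c$ by $(2)$, axiom (M4) $\forall(\exists x \vee y) = \exists x \vee \forall y$ yields $\forall(c \vee a) = c \vee \forall a$, axiom (M2) with $y$ chosen so that $\forall y = c$ yields $\forall(a \imp c) = \exists a \imp c$, axiom (M3) with $x$ chosen so that $\forall x = c$ yields $\forall(c \imp a) = c \imp \forall a$, and $(9)$ with $c = 0$ gives $\forall \neg a = \neg \exists a$, which is $(13)$. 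The $\vee$/$\wedge$ identities $(6)$ and $(7)$ are treated symmetrically by the extremal characterization: since $\exists a \vee \exists b \in \exists A$ is an upper bound of $a \vee b$, we get $\exists(a \vee b) \leq \exists a \vee \exists b$, and monotonicity gives the reverse, while $\forall a \wedge \forall b \in \exists A$ is a lower bound of $a \wedge b$, so $(7)$ follows analogously.

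The remaining items reduce to the ones above. For $(8)$, I would first observe that $(13)$ combined with involutive negation in MV gives the duality $\exists a = \neg\forall\neg a$; then $\exists(a \wedge c) = \neg\forall(\neg a \vee \neg c)$, and since $\neg c \in \exists A$ (subalgebra), $(5)$ rewrites this as $\neg(\forall \neg a \vee \neg c) = \exists a \wedge c$. For $(10)$ and $(12)$, I would use the antitonicity/monotonicity of $\imp$ together with $\forall a \leq a$ and $a \leq \exists a$ to bound $a \imp c \leq \forall a \imp c$ and $c \imp a \leq c \imp \exists a$; since $\forall a \imp c$ and $c \imp \exists a$ lie in $\exists A$ (subalgebra), they are fixed by $\exists$ by $(2)$, so applying $\exists$ and monotonicity $(4)$ yields the desired inequalities. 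Finally, $(14)$ is obtained by applying $\neg$ to $(13)$ (or is immediate from the duality just mentioned).

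The main obstacle, I expect, is just bookkeeping: arranging the items in a dependency order so that nothing is used before it is proved. The substantive step is extracting $(9)$, $(11)$, $(13)$ from the axioms by substituting $\forall y$ (resp.\ $\forall x$) with $c \in \exists A$; once these and the extremal characterization are in hand, everything else is a short chain of rewrites using that $\exists A$ is a subalgebra closed under $\neg$.
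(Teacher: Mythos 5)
Your proof is correct, but it is worth noting that the paper does not actually prove this lemma at all: it simply remarks that the properties hold for monadic BL-algebras and defers to \cite{CCDVR17}. Your argument is therefore a genuine, self-contained derivation (modulo Lemma \ref{LEMA: props basicas}, which the paper also imports from \cite{CCDVR17}), and it hangs together: items $(1)$--$(4)$, $(6)$, $(7)$ from the extremal characterization $\forall a = \max\{c \in \exists A : c \leq a\}$, $\exists a = \min\{c \in \exists A : c \geq a\}$ together with closure of $\exists A$ under the operations; items $(5)$, $(9)$, $(11)$ by instantiating (M4), (M2), (M3) at $y = c$ (resp.\ $x = c$) and using $\forall c = \exists c = c$; and the rest by rewriting with involutive negation and the subalgebra property. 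The dependency order you sketch is acyclic, and each reduction checks out. One small observation: your route to $(14)$ via the duality $\forall a = \neg\exists\neg a$ actually yields the \emph{equality} $\exists\neg a = \neg\forall a$, which is stronger than the stated inequality; the lemma is phrased with $\leq$ only because the statement is inherited verbatim from the monadic BL setting, where negation need not be involutive. This is not an error, but if one wanted a proof valid at the BL level of generality (as in the cited source), items $(8)$, $(13)$, $(14)$ and the duality-based steps would have to be reargued without double-negation elimination; in the MV-specific context of this paper your shortcuts are perfectly legitimate.
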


The following lemma collects some properties specific to monadic MV-algebras that are used below.

\begin{lemma} \label{LEMA: prop de los cuantif en MMV}
Let $\langle \mathbf{A}, \exists, \forall\rangle$ be a monadic MV-algebra. Then, for any $a \in A$:
\begin{enumerate}[$(1)$]  \itemsep0em
\item $\exists a = \neg \forall \neg a$ and $\forall a = \neg \exists \neg a$;
\item $\exists(\exists a \imp a) = 1$;
\item $\exists a^n = (\exists a)^n$ and $\exists na = n \exists a$;
\item $\forall a^n = (\forall a)^n$ and $\forall na = n \forall a$.
\end{enumerate}
\end{lemma}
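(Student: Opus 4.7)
Part (1) follows immediately from identity (13) of the preceding lemma, $\forall \neg a = \neg \exists a$, together with MV-involutivity $\neg \neg a = a$: applying (13) to $\neg a$ yields $\forall a = \forall \neg \neg a = \neg \exists \neg a$, while negating both sides of (13) gives $\neg \forall \neg a = \neg \neg \exists a = \exists a$.

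For part (2), I first rewrite $\exists a \imp a = \neg(\exists a * \neg a)$ by De Morgan, so by part (1) the claim $\exists(\exists a \imp a) = 1$ reduces to $\forall(\exists a * \neg a) = 0$. The key observation is that $\exists a \in \exists A$, which allows one to apply the auxiliary distributivity $\forall(c * b) = c * \forall b$, valid for any $c \in \exists A$. This auxiliary identity can be derived from identity (9) of the preceding lemma, $\forall(a \imp c) = \exists a \imp c$ for $c \in \exists A$, via De Morgan together with part (1) just established. Specialising to $c := \exists a$ and $b := \neg a$ gives $\forall(\exists a * \neg a) = \exists a * \forall \neg a = \exists a * \neg \exists a$, and the last expression is $0$ by the MV identity $c * \neg c = 0$.

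Parts (3) and (4) are mutually dual via part (1) and the MV identities $\neg(a^n) = n \neg a$ and $\neg(na) = (\neg a)^n$, so it suffices to establish $\exists(a^n) = (\exists a)^n$; the remaining three statements then follow by applying $\neg$ and (1). The base case $n = 2$ is precisely (M5). For even exponents $n = 2k$ the induction is straightforward: writing $a^{2k} = (a^k)^2$, (M5) yields $\exists(a^{2k}) = (\exists(a^k))^2$, which by the inductive hypothesis equals $((\exists a)^k)^2 = (\exists a)^{2k}$.

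The main obstacle I anticipate is the odd-exponent case, which cannot be dispatched by a single application of (M5). A plausible route is to combine the distributivity $\exists(c * b) = c * \exists b$ for $c \in \exists A$ (derivable dually to the identity invoked in (2)) with a carefully chosen strong induction — for instance, using that $a^{2n+1}$ squared is $a^{4n+2}$, reducing via the even case and then arguing about "square roots" within the totally ordered setting. Alternatively, one can reduce to the finitely subdirectly irreducible case, in which $\exists A$ is totally ordered by item (5) of the preceding lemma, and exploit the chain structure there to compare $\exists(a^n)$ with $(\exists a)^n$ element by element.
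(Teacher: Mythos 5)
Your part (1) is correct and is exactly the paper's argument. The rest has two genuine gaps. In part (2), the auxiliary identity $\forall(c * b) = c * \forall b$ (for $c \in \exists A$) is indeed true in every monadic MV-algebra, but it does \emph{not} follow from item (9) by De Morgan as you claim. Negating $\forall(a \imp c) = \exists a \imp c$ and using involution yields $\exists(a * \neg c) = \exists a * \neg c$, i.e.\ the \emph{dual} law ``$\exists$ distributes over $*$ with a constant''; the law you need, ``$\forall$ distributes over $*$ with a constant'', is instead the De Morgan dual of the \emph{equality} form of item (12), $\exists(c \imp a) = c \imp \exists a$ — and the preceding lemma only records the inequality $\exists(c \imp a) \leq c \imp \exists a$. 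Pushing that inequality through your De Morgan computation gives $\forall(\exists a * \neg a) \geq \exists a * \neg\exists a = 0$, which is the trivial direction. The missing equality is precisely the non-trivial content here: the paper's proof of (2) imports from \cite{CCDVR17} the identity $\exists(\exists a \imp b) = \exists a \imp \exists b$ (which is that equality with $c = \exists a$) and then takes $b = a$. So your reduction is fine but the step that closes it is unproved.

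For parts (3)--(4), your duality reductions and the even-exponent induction via (M5) are correct, but the odd case — which is where all the work lies — is explicitly left as a speculation; neither of your two suggested routes is carried out, and the ``square roots in a chain'' idea does not obviously terminate. The paper simply cites \cite[Lemma 4.2]{CDV14} for these items. If you want a self-contained repair: the inequality $\exists(a^n) \leq (\exists a)^n$ is immediate since $a^n \leq (\exists a)^n \in \exists A$ and $\exists x$ is the least element of $\exists A$ above $x$; for the converse, note that $\exists x = 1$ implies $\exists(x^n) = 1$ (take $2^k \geq n$ and iterate (M5) to get $\exists(x^{2^k}) = 1$, then use $x^n \geq x^{2^k}$), apply this to $x = \exists a \imp a$ using part (2), and combine the MV-inequality $(x \imp y)^n \leq x^n \imp y^n$ with item (12) to get $1 = \exists\bigl((\exists a \imp a)^n\bigr) \leq \exists\bigl((\exists a)^n \imp a^n\bigr) \leq (\exists a)^n \imp \exists(a^n)$. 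Note, however, that this route depends on part (2), so the gap there must be closed first.
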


\begin{proof}
Item (1) is a straightforward consequence of item (13) in the previous lemma and the involution of $\neg$. Item (2) follows from the fact that $\exists(\exists a \imp b) = \exists a \imp \exists b$ for any $a,b \in A$, which is proved in \cite{CCDVR17} to hold for every monadic MV-algebra. Finally, the proof of last two items can be found in \cite[Lemma 4.2]{CDV14}. 
\end{proof}

\subsubsection*{Functional representation}

An {\em $\mathcal{L}$-functional algebra} is an algebra $\langle \mathbf{A}, \exists, \forall\rangle$ such that there is an MV-chain $\mathbf{C}$ and a non-empty set $X$ with $\mathbf{A} \leq \mathbf{C}^X$ and the following holds:
\begin{equation} \label{EQ: testigos}
(\exists a)(x) = \bigvee \{a(y): y \in X\} \qquad \text{ and }  \qquad (\forall a)(x) = \bigwedge \{a(y): y \in X\} \tag{$*$}
\end{equation}
for all $a \in A$, $x \in X$. It is easy to check that $\mathcal{L}$-functional algebras are, in fact, FSI monadic MV-algebras. If $\mathbf{A}$ satisfies, in addition, that for each $a \in A$ there is $x_a \in X$ such that $(\forall a)(x) = a(x_a)$ for all $x \in X$, we say that $\mathbf{A}$ is an {\em $\mathcal{L}$-functional algebra with witnesses}. Observe that, since the identity $\exists x \approx \neg \forall \neg x$ holds in every monadic MV-algebra, the condition just stated is equivalent to the corresponding one for $\exists$.

\begin{theorem} \label{TEO: las fsi son funcionales con para todo que se alcanza}
Every FSI monadic MV-algebra is isomorphic to an $\mathcal{L}$-functional algebra with witnesses.
\end{theorem}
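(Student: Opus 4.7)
My plan is to produce, for each $a \in A$, a chain-valued MV-homomorphism $h_a \colon \mathbf{A} \to \mathbf{C}$ that fixes $\exists \mathbf{A}$ pointwise and sends $a$ to $\forall a$; then take $X$ to be the set of all such homomorphisms and $\Phi(a)(h) := h(a)$ as the embedding into $\mathbf{C}^X$. Here $\mathbf{C}$ will be a single MV-chain obtained by amalgamating the quotients that realize the individual witnesses. Lemma~\ref{LEMA: props basicas}(5) guarantees that $\exists \mathbf{A}$ is totally ordered, which is essential both for the witness construction and for the amalgamation step.

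The key construction is a witnessing filter. Given $a \in A$, let $F_0$ be the MV-filter of $\mathbf{A}$ generated by $a \imp \forall a$. I would first check that $F_0 \cap \exists A = \{1\}$: via involution, $a \imp \forall a$ coincides with $\exists(\neg a) \imp \neg a$, so Lemma~\ref{LEMA: prop de los cuantif en MMV}(2) gives $\exists(a \imp \forall a) = 1$, and Lemma~\ref{LEMA: prop de los cuantif en MMV}(3) then yields $\exists((a \imp \forall a)^n) = 1$ for every $n$; any $c \in \exists A$ dominating a power of $a \imp \forall a$ must therefore equal $1$. Use Zorn's Lemma to extend $F_0$ to a filter $F_a$ maximal with respect to meeting $\exists A$ only at $1$. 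The crucial claim is that $F_a$ is prime. I would argue by contradiction: if neither $b_1 \imp b_2$ nor $b_2 \imp b_1$ lies in $F_a$, maximality yields elements $c_i \in \exists A \setminus \{1\}$ in the enlarged filters, and totality of $\exists \mathbf{A}$ lets us pick a common dominating $c$ together with $g \in F_a$ and $N \geq 1$ satisfying $g * (b_1 \imp b_2)^N \leq c$ and $g * (b_2 \imp b_1)^N \leq c$; distributivity of $*$ over $\vee$, the identity $(x \vee y)^N = x^N \vee y^N$ (valid in MV-algebras as subdirect products of chains), and pre-linearity $(b_1 \imp b_2) \vee (b_2 \imp b_1) = 1$ then force $g \leq c$, so $c \in F_a$, a contradiction. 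Hence $\mathbf{A}/F_a$ is an MV-chain into which $\exists \mathbf{A}$ embeds, and by construction the image of $a$ coincides with that of $\forall a$.

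Finally, amalgamate the family $\{\mathbf{A}/F_a\}_{a \in A}$ over the common subalgebra $\exists \mathbf{A}$ using the amalgamation property for totally ordered MV-algebras (noted in the introduction), extended from the binary case to arbitrary families via a directed-colimit construction. This produces an MV-chain $\mathbf{C}$ and embeddings $\iota_a \colon \mathbf{A}/F_a \hookrightarrow \mathbf{C}$ all fixing $\exists \mathbf{A}$. Let $X$ be the set of MV-homomorphisms $\mathbf{A} \to \mathbf{C}$ fixing $\exists \mathbf{A}$ (nonempty, since each composite $\iota_a \circ \pi_{F_a}$ belongs to it) and define $\Phi(a)(h) := h(a)$. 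Routine verifications then show that $\Phi$ is an MV-embedding, that $\Phi(\forall a)$ is the constant function $\forall a$ realizing the infimum of the $h(a)$'s (witnessed by $\iota_a \circ \pi_{F_a}$), and that the dual statement for $\exists$ follows by applying the same construction to $\neg a$ together with $\exists = \neg \forall \neg$; injectivity of $\Phi$ reduces to the existence of $\forall$-witnesses for $a \imp b$ and $b \imp a$. The main obstacle is the primality argument, which has to weave together distributivity, the chain identity for powers, and pre-linearity simultaneously under the maximality hypothesis on $F_a$.
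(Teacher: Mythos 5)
Your proof is correct, and it is worth noting that the paper itself does not actually prove this theorem: it defers entirely to the proof of Theorem 3.8 in \cite{CCDVR21}. Your argument is a self-contained reconstruction built from exactly the toolkit this paper deploys elsewhere for the bounded-width analogue (Lemma \ref{LEMA: ampliacion de filtros de Rutledge} and Theorem \ref{TEO: FSI de ancho k son k funcionales}): prime filters meeting $\exists A$ trivially, followed by amalgamation of the quotient chains over $\exists \mathbf{A}$. All the key steps check out. The filter generated by $a \imp \forall a$ avoids $\exists A \setminus \{1\}$ because $a \imp \forall a = \exists(\neg a) \imp \neg a$, so $\exists\bigl((a \imp \forall a)^n\bigr) = 1$ by Lemma \ref{LEMA: prop de los cuantif en MMV}(2)--(3), and any $c \in \exists A$ above a power of it satisfies $c = \exists c = 1$. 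The primality of a filter maximal with respect to $F \cap \exists A = \{1\}$ follows exactly as you say from prelinearity, distributivity of $*$ over $\vee$, and $(x \vee y)^N = x^N \vee y^N$; the totality of $\exists \mathbf{A}$ (Lemma \ref{LEMA: props basicas}(5), i.e., the FSI hypothesis) is what lets you replace $c_1, c_2$ by a single $c \ne 1$. The witness, injectivity, and sup/inf verifications at the end are all routine as claimed, with the $\exists$-witness obtained from the $\forall$-witness of $\neg a$. The only step you should not leave implicit is the amalgamation of the \emph{infinite} family $\{\mathbf{A}/F_a\}_{a \in A}$ over $\exists\mathbf{A}$: the introduction only records the binary amalgamation property for MV-chains, so you need the standard transfinite iteration, using at limit stages that a union of a chain of MV-chains under embeddings is again an MV-chain. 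That is exactly the "directed-colimit" remark you make, and it is routine, but it is the one point where a referee would ask you to write out the induction.
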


\begin{proof}
It follows from the proof of Theorem 3.8 in \cite{CCDVR21}.
\end{proof}

\begin{corollary} \label{CORO: las funcionales con para todo que se alcanza generan}
The class of $\mathcal{L}$-functional algebras with witnesses generates $\mathbb{MMV}$ as a quasi-variety.
\end{corollary}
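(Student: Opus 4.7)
The plan is to bootstrap from Theorem \ref{TEO: las fsi son funcionales con para todo que se alcanza} via Birkhoff's subdirect representation theorem. Recall that the quasi-variety generated by a class $\mathcal{K}$ is $\mathbb{ISPP}_U(\mathcal{K})$, so to prove the corollary it is enough to show every monadic MV-algebra embeds into a product of $\mathcal{L}$-functional algebras with witnesses.

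First I would take an arbitrary $\langle \mathbf{A}, \exists, \forall\rangle \in \mathbb{MMV}$ and apply Birkhoff's theorem to write it as a subdirect product of subdirectly irreducible monadic MV-algebras $\langle \mathbf{A}_i, \exists_i, \forall_i\rangle$. In particular, $\mathbf{A}$ embeds into $\prod_i \mathbf{A}_i$. Next I would observe that every subdirectly irreducible algebra is finitely subdirectly irreducible (if $\theta_1 \wedge \theta_2 = \Delta$ with both nontrivial, then both lie above the minimum nontrivial congruence, contradicting $\theta_1 \wedge \theta_2 = \Delta$), so each $\langle \mathbf{A}_i, \exists_i, \forall_i\rangle$ is FSI.

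Then I would invoke Theorem \ref{TEO: las fsi son funcionales con para todo que se alcanza} to replace each factor $\mathbf{A}_i$ by an isomorphic $\mathcal{L}$-functional algebra with witnesses $\mathbf{B}_i$. Combining these isomorphisms into the product, $\mathbf{A}$ embeds into $\prod_i \mathbf{B}_i$, so $\langle \mathbf{A}, \exists, \forall\rangle$ lies in $\mathbb{ISP}$ of the class of $\mathcal{L}$-functional algebras with witnesses. This shows $\mathbb{MMV}$ is contained in the quasi-variety they generate. The reverse inclusion is immediate because $\mathcal{L}$-functional algebras with witnesses are themselves monadic MV-algebras and $\mathbb{MMV}$ is a variety, hence closed under $\mathbb{I}, \mathbb{S}, \mathbb{P}, \mathbb{P}_U$.

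There is no real obstacle here: all of the algebraic content has been packed into the preceding theorem, and the remainder is a routine application of Birkhoff's theorem together with the trivial implication SI $\Rightarrow$ FSI. The only thing to be mildly careful about is not to confuse variety generation with quasi-variety generation — but since every subdirect representation already yields an embedding into a direct product, the argument stays within $\mathbb{ISP}$ and never needs $\mathbb{H}$.
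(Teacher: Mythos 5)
Your argument is correct and is exactly the intended derivation: the paper leaves the corollary unproved precisely because it follows from Theorem \ref{TEO: las fsi son funcionales con para todo que se alcanza} by the standard route of Birkhoff's subdirect representation (every algebra in the variety $\mathbb{MMV}$ embeds into a product of its subdirectly irreducible, hence FSI, quotients, each of which is isomorphic to an $\mathcal{L}$-functional algebra with witnesses). Your care in staying inside $\mathbb{ISP}$ and noting the trivial reverse inclusion is exactly what is needed.
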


\subsubsection*{Finite embedabbility property}

We now show that $\mathcal{L}$-functional algebras with witnesses can be partially embedded into finite algebras of the form $\langle \mathbf{L}_m^n, \exists_\vee, \forall_\wedge\rangle$ (see Example \ref{EJ: canonical algebras}).

\begin{lemma} \label{LEMA: fep para funcionales}
Let $\mathbf{B} := \langle \mathbf{A}, \exists, \forall\rangle$ be an $\mathcal{L}$-functional algebra with witnesses. Suppose $\mathbf{A} \leq \mathbf{C}^X$ for some MV-chain $\mathbf{C}$ and non-empty set $X$ such that for each $a \in A$ there is $x_a \in X$ such that $(\forall^\mathbf{B} a)(x) = a(x_a)$ for every $x \in X$. For any finite subset $S \subseteq A$ there are an algebra $\mathbf{D} := \langle \mathbf{L}_m^n, \exists_\vee, \forall_\wedge\rangle$, where $n$ and $m$ are natural numbers, $n \leq |X|$, and a one-to-one function $h\colon S \to L_m^n$ such that:
\begin{itemize}
\item $h(0^\mathbf{B}) = 0^\mathbf{D}$ if $0^\mathbf{B} \in S$,
\item $h(a_1 \imp^\mathbf{B} a_2) = h(a_1) \imp^\mathbf{D} h(a_2)$ if $a_1,a_2,a_1 \imp^\mathbf{B} a_2 \in S$,
\item $h(\forall^\mathbf{B} a) = \forall^\mathbf{D} h(a)$ if $a, \forall^\mathbf{B} a \in S$.
\end{itemize}
\end{lemma}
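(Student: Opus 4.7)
The plan is to reduce to a finite set of evaluation points in $X$, restrict, and then apply the finite embeddability property of MV-chains pointwise. The two jobs that $Y \subseteq X$ must do are: (i) separate the finitely many elements of $S$, so the restriction map is injective, and (ii) contain a witness $x_a$ for every $a \in S$ with $\forall^\mathbf{B} a \in S$, so that $\forall$ is preserved after restriction. Concretely, I would set
\[
Y := \{x_a : a \in S,\ \forall^\mathbf{B} a \in S\} \cup \{y_{a,b} : a,b \in S,\ a \neq b\},
\]
where $y_{a,b} \in X$ is any point with $a(y_{a,b}) \neq b(y_{a,b})$ (existing because $a \neq b$ as elements of $C^X$). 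Then $n := |Y|$ is finite and $n \leq |X|$.

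Next I would consider the restriction $\rho\colon A \to \mathbf{C}^Y$, $\rho(a) := a|_Y$. It is a homomorphism of MV-algebras, injective on $S$ by construction. For $a \in S$ with $\forall^\mathbf{B} a \in S$, the function $\forall^\mathbf{B} a$ is constant on $X$ with value $a(x_a)$, while $\forall_\wedge \rho(a)$ is constant on $Y$ with value $\bigwedge_{y \in Y} a(y)$; since $x_a \in Y$ and $a(x_a) = \bigwedge_{y \in X} a(y) \leq a(y)$ for every $y \in Y$, the two minima agree, so $\rho(\forall^\mathbf{B} a) = \forall_\wedge \rho(a)$. Thus $\rho$ has exactly the partial-embedding properties we want, but into the (possibly infinite) chain $\mathbf{C}$.

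To land in an algebra of the form $\langle \mathbf{L}_m^n, \exists_\vee, \forall_\wedge\rangle$, I would invoke the finite embeddability property of totally ordered MV-algebras, mentioned in the introduction, applied to the finite set
\[
T := \{a(y) : a \in S,\ y \in Y\} \cup \{0\} \subseteq C.
\]
This yields some $\mathbf{L}_m$ and a one-to-one partial MV-homomorphism $\iota\colon T \to L_m$ with $\iota(0)=0$; in particular $\iota$ is order-preserving on $T$. Defining $h(a)(y) := \iota(a(y))$ gives a map $h\colon S \to L_m^n$ which is injective (composition of two injections on the relevant sets), sends $0^\mathbf{B}$ to $0^\mathbf{D}$, and partially preserves $\imp$ because for $a_1,a_2,a_1\imp a_2 \in S$ all the values $a_1(y),a_2(y),a_1(y)\imp a_2(y)$ lie in $T$. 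For $\forall$, we use order-preservation of $\iota$ on $T$ together with the previous paragraph to conclude
\[
\forall_\wedge h(a) = \textstyle\bigwedge_{y \in Y} \iota(a(y)) = \iota(a(x_a)) = \iota((\forall^\mathbf{B} a)(y)) = h(\forall^\mathbf{B} a).
\]
The only delicate point is the verification of $\forall$-preservation, which is precisely where the witness hypothesis on $\mathbf{B}$ is essential; the rest is bookkeeping around the MV-chain FEP.
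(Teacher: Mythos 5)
Your proof is correct and follows essentially the same route as the paper's: restrict to a finite set of points containing the witnesses $x_a$ and enough separating points, then apply the finite embeddability property of MV-chains to the finite set of values and transport everything pointwise. The one small difference is that the paper also puts $1$ into the finite value set (your $T$), and this is what actually licenses your claim that $\iota$ is order-preserving where needed: from $a(x_a) \imp a(y) = 1 \in T$ one gets $\iota(a(x_a)) \imp \iota(a(y)) = \iota(1) = 1$, so you should add $1$ to $T$ to make that step airtight.
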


\begin{proof}
Let $X_0$ be a finite subset of $X$ such that:
\begin{itemize}
\item $x_{a} \in X_0$ for $a \in S$,
\item for $a,b \in S$ with $a \ne b$, there is $x \in X_0$ such that $a(x) \ne b(x)$.
\end{itemize}
Let $C_0 := \{a(x): a \in S, x \in X_0\} \cup \{0,1\}$. Since the class of MV-chains has the finite embeddability property, there is a finite MV-chain $\mathbf{L}_m$ and a one-to-one function $f\colon C_0 \to L_m$ such that
\begin{itemize}
\item $f(0^\mathbf{C}) = 0^{\mathbf{L}_m}$, $f(1^\mathbf{C}) = 1^{\mathbf{L}_m}$,
\item $f(c_1 \imp^\mathbf{C} c_2) = f(c_1) \imp^{\mathbf{L}_m} f(c_2)$ if $c_1, c_2, c_1 \imp^\mathbf{C} c_2 \in C_0$.
\end{itemize}
Consider the algebra $\mathbf{D} := \langle \mathbf{L}_m^{X_0}, \exists_\vee, \forall_\wedge\rangle$. Let $h\colon S \to L_m^{X_0}$ be defined by $h(a)(x) := f(a(x))$ for $a \in S$, $x \in X_0$. We prove now that $h$ has the desired properties.
\begin{itemize}
\item $h$ is one-to-one: Let $a,b \in S$ with $a \ne b$. There is $x \in X_0$ such that $a(x) \ne b(x)$. Since $f$ is one-to-one, $f(a(x)) \ne f(b(x))$, so  $h(a)(x) \ne h(b)(x)$. This shows that $h(a) \ne h(b)$.
\item Assume $0^\mathbf{B} \in S$. Then, for every $x \in X_0$ we have $h(0^\mathbf{B})(x) = f(0^\mathbf{B}(x)) = f(0^\mathbf{C}) = 0^{\mathbf{L}_m} = 0^\mathbf{D}(x)$. This shows that $h(0^\mathbf{B}) = 0^\mathbf{D}$.
\item Assume $a_1, a_2, a_1 \imp^\mathbf{B} a_2 \in S$. Then, for every $x \in X_0$ we have that
\begin{align*}
h(a_1 \imp^\mathbf{B} a_2)(x) & = f((a_1 \imp^\mathbf{B} a_2)(x)) \\
& = f(a_1(x) \imp^\mathbf{C} a_2(x)) \\
& = f(a_1(x)) \imp^{\mathbf{L}_m} f(a_2(x)) \\
& = h(a_1)(x) \imp^{\mathbf{L}_m} h(a_2)(x) \\
& = (h(a_1) \imp^\mathbf{D} h(a_2))(x).
\end{align*}
This shows that $h(a_1 \imp^\mathbf{B} a_2) = h(a_1) \imp^\mathbf{D} h(a_2)$.
\item Assume $a, \forall^\mathbf{B}a \in S$. By assumption, $(\forall^\mathbf{B}a)(x) = a(x_a)$ for every $x \in X$. In particular, for every $x \in X_0$ we have that $a(x_a) \imp^\mathbf{C} a(x) = 1^\mathbf{C}$, and, since $a(x_a), a(x), 1 \in C_0$, it follows that $f(a(x_a) \to^\mathbf{C} a(x)) = f(a(x_a)) \imp^{\mathbf{L}_m} f(a(x)) = f(1^\mathbf{C}) = 1^{\mathbf{L}_m}$. This proves that $f(a(x_a)) \leq^{\mathbf{L}_m} f(a(x))$ for every $x \in X_0$, that is, $h(a)(x_a) \leq^{\mathbf{L}_m} h(a)(x)$ for every $x \in X_0$. Thus, for every $x \in X_0$ we have that $(\forall^\mathbf{D} h(a))(x) = h(a)(x_a) = f(a(x_a)) = f((\forall^\mathbf{B}a)(x)) = h(\forall^\mathbf{B}a)(x)$. This shows that $\forall^\mathbf{D} h(a) = h(\forall^\mathbf{B}a)$.
\qedhere
\end{itemize}
\end{proof}

\begin{theorem}
$\mathbb{MMV}_\mathrm{FSI}$, and hence also $\mathbb{MMV}$, has the finite embeddability property. 
\end{theorem}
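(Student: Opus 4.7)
The plan is to assemble the FSI case directly from Theorem \ref{TEO: las fsi son funcionales con para todo que se alcanza} and Lemma \ref{LEMA: fep para funcionales}, and then bootstrap to the full variety via Birkhoff's subdirect representation.

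For the FSI part, given $\mathbf{B} \in \mathbb{MMV}_{\mathrm{FSI}}$ and a finite $S \subseteq B$, Theorem \ref{TEO: las fsi son funcionales con para todo que se alcanza} lets me assume, up to isomorphism, that $\mathbf{B}$ is an $\mathcal{L}$-functional algebra with witnesses. Lemma \ref{LEMA: fep para funcionales} then produces a finite algebra $\mathbf{D} = \langle \mathbf{L}_m^n, \exists_\vee, \forall_\wedge\rangle$ and a one-to-one partial homomorphism $h\colon S \to D$ preserving $0$, $\imp$, and $\forall$ on $S$. Since every operation in the signature of $\mathbb{MMV}$ is a term in $\{0, \imp, \forall\}$ (e.g.\ $\neg x = x \imp 0$ and $\exists x = \neg \forall \neg x$), this yields the partial embedding required by FEP. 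I still need $\mathbf{D} \in \mathbb{MMV}_{\mathrm{FSI}}$: the algebra belongs to $\mathbb{MMV}$ by Example \ref{EJ: canonical algebras}, and by Lemma \ref{LEMA: props basicas}(5) it is FSI because $\exists \mathbf{D}$ is the set of constant functions, which is isomorphic to the chain $\mathbf{L}_m$.

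For the "hence" clause I would invoke Birkhoff's theorem: any $\mathbf{A} \in \mathbb{MMV}$ embeds as a subdirect product of subdirectly irreducible (hence FSI) algebras $\{\mathbf{A}_i\}_{i \in I}$. Given finite $S \subseteq A$, finitely many coordinates $i_1,\dots,i_k$ suffice to separate the distinct pairs in $S$. Applying the FSI case to the projection $S_{i_j} \subseteq A_{i_j}$ yields a finite FSI algebra $\mathbf{D}_{i_j}$ and a partial embedding $h_{i_j}\colon S_{i_j} \to D_{i_j}$. The composite $S \to \prod_j \mathbf{A}_{i_j} \to \prod_j \mathbf{D}_{i_j}$ then lands in a finite member of $\mathbb{MMV}$, is injective by the separation property, and inherits partial preservation of operations coordinatewise from the individual $h_{i_j}$.

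The main obstacle --- and it is fairly tame --- is the gap between the three-operation partial preservation furnished by Lemma \ref{LEMA: fep para funcionales} and the full signature required by FEP. I would address this either by first enlarging $S$ (still finitely) so that it is closed under the missing basic operations evaluated on its original elements, or by inspecting the proof of Lemma \ref{LEMA: fep para funcionales} and adding the straightforward verifications for $\neg, \vee, \wedge, \oplus, \odot, \exists$, each of which goes through by the same witness and finite-embeddability-of-MV-chains argument used there.
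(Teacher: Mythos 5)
Your proposal is correct and follows essentially the same route as the paper: the FSI case is exactly the combination of Theorem \ref{TEO: las fsi son funcionales con para todo que se alcanza} with Lemma \ref{LEMA: fep para funcionales}, and the passage to all of $\mathbb{MMV}$ via Birkhoff's subdirect representation is precisely the ``easy exercise'' the paper leaves to the reader. Your extra care about upgrading the $\{0,\imp,\forall\}$-partial preservation to the full signature (by enlarging $S$ to contain the intermediate subterm values, or by direct verification) is a legitimate point the paper glosses over, and you handle it correctly.
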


\begin{proof}
It follows immediately from Theorem \ref{TEO: las fsi son funcionales con para todo que se alcanza} and Lemma \ref{LEMA: fep para funcionales} that $\mathbb{MMV}_\textrm{FSI}$ has the finite embeddability property. An easy exercise proves then that $\mathbb{MMV}$ also has this property.
\end{proof}

\subsubsection*{Generating $\mathbb{MMV}$ as a quasivariety}

As another consequence of Lemma \ref{LEMA: fep para funcionales} we show several families of monadic MV-algebras that generate $\mathbb{MMV}$ as a quasi-variety; one of these families consists of only one (generic) algebra.

\begin{theorem} \label{TEO: MMV generada como cuasi por las finitas}
$\mathbb{MMV}$ is generated as a quasivariety by the algebras $\langle \mathbf{L}_m^n, \exists_\vee, \forall_\wedge\rangle$ for $m, n \geq 1$.
\end{theorem}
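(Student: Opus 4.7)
The plan is to combine Corollary \ref{CORO: las funcionales con para todo que se alcanza generan} with the partial-embedding Lemma \ref{LEMA: fep para funcionales} in the standard ``failure transfer'' pattern for finite embeddability arguments. Write $\mathcal{K}$ for the class of algebras $\langle \mathbf{L}_m^n, \exists_\vee, \forall_\wedge\rangle$ with $m,n\geq 1$. It suffices to show that any quasi-identity valid in every member of $\mathcal{K}$ is valid in $\mathbb{MMV}$. By Corollary \ref{CORO: las funcionales con para todo que se alcanza generan}, I only need to check validity on $\mathcal{L}$-functional algebras with witnesses: if a quasi-identity $q$ fails in $\mathbb{MMV}$, it fails in some such $\mathbf{B}$.

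Before invoking Lemma \ref{LEMA: fep para funcionales}, I would first reduce the language to $\{\imp, 0, \forall\}$, since that is all the lemma directly preserves. Every other monadic MV operation is term-definable from these: the MV-operations via the usual definitions from $\imp$ and $0$, and $\exists x := \neg\forall\neg x$ by Lemma \ref{LEMA: prop de los cuantif en MMV}(1). So I rewrite $q$ as a quasi-identity in this reduced language without changing its semantics.

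Now suppose $q\colon \bigwedge_{i=1}^k (s_i\approx t_i) \Rightarrow (s\approx t)$ fails in $\mathbf{B}$ under some assignment $v$. Let $S \subseteq B$ be the finite set of values $u^{\mathbf{B}}(v)$ where $u$ ranges over all subterms of the $s_i,t_i,s,t$. By construction $S$ is closed under subterm evaluation, so Lemma \ref{LEMA: fep para funcionales} applies and yields $\mathbf{D}=\langle \mathbf{L}_m^n,\exists_\vee,\forall_\wedge\rangle\in\mathcal{K}$ together with an injection $h\colon S\to L_m^n$ preserving $0$, $\imp$ and $\forall$ on the pairs that arise. Define $v'(x) := h(v(x))$ for each variable $x$. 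A routine induction on the complexity of subterms $u$ (using the three preservation clauses of the lemma) shows $u^{\mathbf{D}}(v') = h(u^{\mathbf{B}}(v))$. Hence the premises of $q$ still hold under $v'$, while injectivity of $h$ forces $s^{\mathbf{D}}(v')\neq t^{\mathbf{D}}(v')$, so $q$ fails in $\mathbf{D}\in\mathcal{K}$, a contradiction.

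The only mildly delicate point — and the ``main obstacle'' such as it is — is ensuring that the reduction to the signature $\{\imp,0,\forall\}$ is legitimate and that the closure of $S$ under subterm values really does supply all the pairs needed to apply the three preservation bullets of Lemma \ref{LEMA: fep para funcionales}; once these bookkeeping points are handled, the inductive transfer of the countermodel is mechanical.
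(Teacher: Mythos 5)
Your proposal is correct and follows essentially the same route as the paper's own proof: reduce the quasi-identity to the signature $\{\imp,0,\forall\}$, use Corollary \ref{CORO: las funcionales con para todo que se alcanza generan} to locate a failing $\mathcal{L}$-functional algebra with witnesses, take $S$ to be the set of subterm values, and transfer the counterexample to some $\langle \mathbf{L}_m^n, \exists_\vee, \forall_\wedge\rangle$ via the partial embedding of Lemma \ref{LEMA: fep para funcionales} together with the induction $u^{\mathbf{D}}(v') = h(u^{\mathbf{B}}(v))$. The ``delicate points'' you flag (language reduction and closure of $S$ under subterms) are exactly the ones the paper handles in the same way, so nothing further is needed.
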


\begin{proof}
Let $$\varphi := \forall x_1,\ldots,x_n \, (s_1(\bar{x}) = t_1(\bar{x}) \mathbin{\&} \ldots \mathbin{\&} s_n(\bar{x}) = t_n(\bar{x}) \Rightarrow s(\bar{x}) = t(\bar{x}))$$ be a quasi-identity that is not valid in $\mathbb{MMV}$. We may assume, without loss of generality, that $\varphi$ is in the language $\{\to,0,\forall\}$, since all the other basic operations are expressible by terms in this sublanguage.

By Corollary \ref{CORO: las funcionales con para todo que se alcanza generan}, $\varphi$ does not hold in some $\mathcal{L}$-functional algebra $\mathbf{B} := \langle \mathbf{A}, \exists, \forall\rangle$ with witnesses. Thus, there are elements $a_1,\ldots,a_n \in A$ such that $s_i^\mathbf{B}(\bar{a}) = t_i^\mathbf{B}(\bar{a})$ for $1 \leq i \leq n$, but $s^\mathbf{B}(\bar{a}) \ne t^\mathbf{B}(\bar{a})$.

Let $S := \{r^\mathbf{B}(\bar{a}): r(\bar{x}) \text{ subterm of a term appearing in } \varphi\}$. By Lemma \ref{LEMA: fep para funcionales}, there are natural numbers $m,q$ and and a one-to-one map $f\colon S \to L_m^q$ such that:
\begin{itemize}
\item $h(0^\mathbf{B}) = 0^\mathbf{C}$ if $0^\mathbf{B} \in S$,
\item $h(a \imp^\mathbf{B} b) = h(a) \imp^\mathbf{C} h(b)$ if $a,b, a \imp^\mathbf{B} b \in S$,
\item $h(\forall^\mathbf{B} a) = \forall^\mathbf{C}h(a)$ if $a, \forall^\mathbf{B} a \in S$,
\end{itemize}
where $\mathbf{C} := \langle \mathbf{L}_m^q, \exists_\vee, \forall_\wedge\rangle$. A simple induction proves that for every subterm $r(\bar{x})$ of a term in $\varphi$, $h(r^\mathbf{B}(\bar{a})) = r^\mathbf{C}(h(\bar{a}))$, where $h(\bar{a})$ stands for $(h(a_1),\ldots,h(a_n))$. Thus, for $1 \leq i \leq r$ we have that $s_i^\mathbf{C}(h(\bar{a})) = h(s_i^\mathbf{B}(\bar{a})) = h(t_i^\mathbf{B}(\bar{a})) = t_i^\mathbf{C}(h(\bar{a}))$; however, since $s^\mathbf{B}(\bar{a}) \ne t^\mathbf{B}(\bar{a})$ and $h$ is one-to-one, we have that $h(s^\mathbf{B}(\bar{a})) \ne h(t^\mathbf{B}(\bar{a}))$, so $s^\mathbf{C}(h(\bar{a})) \ne t^\mathbf{C}(h(\bar{a}))$. This shows that $\varphi$ does not hold in $\mathbf{C}$.
\end{proof}

\begin{corollary} \label{COR: mmv generadas como cuasi por...}
\
\begin{enumerate}[$(1)$]
\item $\mathbb{MMV}$ is generated as a quasivariety by the algebras $\langle \mathbf{L}_m^\omega, \exists_\vee, \forall_\wedge\rangle$ for $m \geq 1$.
\item $\mathbb{MMV}$ is generated as a quasivariety by the algebras $\langle [0,1]_\text{\rm \L}^k, \exists_\vee, \forall_\wedge\rangle$ for $k \geq 1$.
\item $\mathbb{MMV}$ is generated as a quasivariety by the algebra $\langle [0,1]_\text{\rm \L}^\omega, \exists_\vee, \forall_\wedge\rangle$.
\end{enumerate}
\end{corollary}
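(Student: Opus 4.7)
The plan is to deduce all three statements from Theorem \ref{TEO: MMV generada como cuasi por las finitas} by showing that each generating algebra $\langle \mathbf{L}_m^n, \exists_\vee, \forall_\wedge\rangle$ embeds into some member of the target family. Since quasivarieties are closed under $\mathbf{I}$ and $\mathbf{S}$, exhibiting such embeddings is enough to transfer the ``generates as a quasivariety'' property. The Example \ref{EJ: canonical algebras} has already recorded the two tools I need: the ``extension by constant at a fixed point'' embedding that enlarges the index set, and the pointwise-action embedding that enlarges the fibre.

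For part (1), I would fix $n \geq 1$, pick $x_0$ in $\{1,\dots,n\}$, and map $f \in L_m^n$ to $f' \in L_m^\omega$ defined by $f'(x) = f(x)$ for $x \leq n$ and $f'(x) = f(x_0)$ otherwise. This is an MV-embedding, and it preserves $\exists_\vee, \forall_\wedge$ because the ranges of $f$ and $f'$ coincide as finite subsets of $L_m$, so the sup and inf (which exist and are achieved since $L_m$ is finite) are the same in both algebras. For part (2), note that the standard inclusion $\iota\colon \mathbf{L}_m \hookrightarrow [0,1]_\text{\rm \L}$ induces the pointwise map $\iota^*\colon L_m^n \to [0,1]^n$ given by $\iota^*(f)(x) = \iota(f(x))$; this is an MV-embedding, and the finite sup/inf used to define $\exists_\vee$ and $\forall_\wedge$ commute with $\iota$ because any MV-embedding preserves the lattice operations and $n$ is finite. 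Composing with part (1)'s embedding $\langle \mathbf{L}_m^n, \exists_\vee, \forall_\wedge\rangle \hookrightarrow \langle \mathbf{L}_m^\omega, \exists_\vee, \forall_\wedge\rangle$ (which also works with $[0,1]_\text{\rm \L}$ in place of $\mathbf{L}_m$, again by the Example) and reapplying the enlargement trick to go from $[0,1]_\text{\rm \L}^k$ to $[0,1]_\text{\rm \L}^\omega$ yields part (3).

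There is no real obstacle here; the corollary is essentially a bookkeeping exercise given Theorem \ref{TEO: MMV generada como cuasi por las finitas} and Example \ref{EJ: canonical algebras}. The one subtlety worth being explicit about is the preservation of the quantifier operations, which rests on having the relevant suprema and infima actually achieved: this is automatic whenever the fibre is finite (parts (1) and (2)), and in the extension-by-constant embedding it holds because the function's range of values is not enlarged, so that the sup in the bigger index set reduces to the same finite maximum.
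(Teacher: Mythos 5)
Your proposal is correct and follows essentially the same route as the paper: the paper's proof also reduces everything to Theorem \ref{TEO: MMV generada como cuasi por las finitas} via the embeddings of $\langle \mathbf{L}_m^n, \exists_\vee, \forall_\wedge\rangle$ into the three target families recorded in Example \ref{EJ: canonical algebras}. Your extra care about why the extension-by-constant and pointwise-inclusion maps preserve $\exists_\vee$ and $\forall_\wedge$ is just a more explicit version of what the paper leaves implicit.
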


\begin{proof}
The proof is immediate from the fact that $\langle \mathbf{L}_m^k, \exists_\vee, \forall_\wedge\rangle$ embeds into $\langle \mathbf{L}_m^\omega, \exists_\vee, \forall_\wedge\rangle$, into $\langle [0,1]_\text{\rm \L}^k, \exists_\vee, \forall_\wedge\rangle$ and into $\langle [0,1]_\text{\rm \L}^\omega, \exists_\vee, \forall_\wedge\rangle$ (see Example \ref{EJ: canonical algebras}).
\end{proof}

\subsection{Main completeness theorem}

In this section we derive the completeness theorem for the logic $\mathrm{S5}(\mathcal{L})$ from the algebraic results of the previous section.

Recall first the non-standard completeness theorem from \cite[Theorem 3.10]{CCDVR21} and the fact that $\mathbb{MMV}$ is the equivalent algebraic semantics of $\mathrm{S5}(\mathcal{L})$ from \cite{CCDVR17}.

\begin{theorem}[\cite{CCDVR21,CCDVR17}]
\
\begin{enumerate}[$(1)$]
\item $\mathrm{S5}(\mathcal{L})$ is strongly complete with respecto to $\mathrm{S5}(\mathbb{MV}_\mathrm{to})$.
\item $\mathbb{MMV}$ is the equivalent algebraic semantics of $\mathrm{S5}(\mathcal{L})$.
\end{enumerate}
\end{theorem}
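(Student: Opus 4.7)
The plan is to establish (2) first via the Blok--Pigozzi algebraizability criteria, and then to derive (1) from (2) combined with the functional representation of Theorem~\ref{TEO: las fsi son funcionales con para todo que se alcanza}.

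For (2), take the translations $\tau(\varphi) := \{\varphi \approx 1\}$ and $\rho(s \approx t) := \{s \equiv t\}$. Checking that $\varphi$ is interderivable in $\mathrm{S5}(\mathcal{L})$ with $\rho(\tau(\varphi))$, and that $\{s \approx t\}$ is equivalent modulo $\mathbb{MMV}$ to $\tau(\rho(s \approx t))$, both reduce to the corresponding algebraizability of $\mathcal{L}$ with respect to $\mathbb{MV}$; the extra modal axioms are precisely what force $\forall$ and $\exists$ to satisfy the identities (M1)--(M5), with Necessitation ensuring that these quantifier axioms are preserved by consequence. This is essentially bookkeeping and can be extracted directly from the cited references.

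For (1), suppose $\Gamma \not\vdash_{\mathrm{S5}(\mathcal{L})} \varphi$. By (2) there exist $\mathbf{B} = \langle \mathbf{A}, \exists, \forall\rangle \in \mathbb{MMV}$ and a valuation $v \colon Fm \to B$ with $v(\psi) = 1$ for all $\psi \in \Gamma$ and $v(\varphi) \ne 1$. Since $\forall a \leq a$ holds in every monadic MV-algebra, also $\forall v(\varphi) \ne 1$. By Lemma~\ref{LEMA: props basicas}(4) the congruences of $\mathbf{B}$ correspond bijectively to the filters of $\exists \mathbf{A}$, so the prime filter theorem applied to $\exists \mathbf{A}$ produces a prime filter $F$ with $\forall v(\varphi) \notin F$. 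The associated congruence $\theta$ on $\mathbf{B}$ gives a quotient $\mathbf{B}/\theta$ whose image under $\exists$ is isomorphic to $(\exists \mathbf{A})/F$, hence totally ordered; by Lemma~\ref{LEMA: props basicas}(5), $\mathbf{B}/\theta$ is FSI. Moreover $v(\psi)/\theta = 1$ for every $\psi \in \Gamma$ and $v(\varphi)/\theta \ne 1$.

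Now Theorem~\ref{TEO: las fsi son funcionales con para todo que se alcanza} realizes $\mathbf{B}/\theta$ as a subalgebra of $\mathbf{C}^X$ for some MV-chain $\mathbf{C}$ and non-empty set $X$, with $\exists$ and $\forall$ computed pointwise by (\ref{EQ: testigos}). Define a safe structure $\mathbf{K} := \langle X, e, \mathbf{C}\rangle$ by setting $e(x, p) := v(p)(x)$ on propositional variables; a routine induction on formula complexity yields $\|\psi\|_{\mathbf{K}, x} = v(\psi)(x)$, where the modal case is precisely (\ref{EQ: testigos}). Therefore $\mathbf{K}$ models $\Gamma$ but not $\varphi$, so $\Gamma \not\vDash_{\mathrm{S5}(\mathbb{MV}_\mathrm{to})} \varphi$. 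The main obstacle is locating the right FSI quotient: one must kill $\Gamma$ trivially, keep $v(\varphi)$ off $1$, and end up in an FSI algebra so that Theorem~\ref{TEO: las fsi son funcionales con para todo que se alcanza} applies. What makes this work is the combination of the correspondence between monadic congruences and filters of the (potentially much smaller) algebra $\exists \mathbf{A}$ with the elementary implication $v(\varphi) \ne 1 \Rightarrow \forall v(\varphi) \ne 1$, which together reduce the problem to the standard prime filter argument for MV-algebras.
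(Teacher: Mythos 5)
The paper gives no proof of this theorem: both items are imported from \cite{CCDVR21} and \cite{CCDVR17}, so there is nothing internal to compare your argument against line by line. Your reconstruction, however, follows essentially the same route as those sources and as the paper's own subsequent completeness proofs (Theorem \ref{TEO: finite strong completeness - general case} runs the identical pattern: algebraizability converts non-derivability into a failed quasi-identity, and a representation theorem converts the separating algebra into a safe structure). The core of your argument for (1) is sound: passing to an FSI quotient via the congruence correspondence of Lemma \ref{LEMA: props basicas}, using $v(\varphi)\ne 1 \Rightarrow \forall v(\varphi)\ne 1$ together with a prime filter of $\exists\mathbf{A}$ avoiding $\forall v(\varphi)$, and then reading off a safe structure from Theorem \ref{TEO: las fsi son funcionales con para todo que se alcanza}. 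One detail you should make explicit is that the monadic filter of $\mathbf{B}$ corresponding to the prime filter $F\subseteq \exists A$ is $\{a\in A:\forall a\in F\}$; this is exactly what guarantees $v(\varphi)/\theta\ne 1$, and without it the claim that the quotient still refutes $\varphi$ is unsupported.

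Two omissions are worth naming. First, you prove only the completeness half of item (1), whereas ``strongly complete with respect to'' as the paper uses it includes soundness (the paper later invokes ``the soundness part of'' this very theorem); the soundness check is routine for most axioms but the validity of $\lozenge(\varphi*\varphi)\equiv(\lozenge\varphi)*(\lozenge\varphi)$ in every safe structure over an MV-chain is a genuine (if standard) verification, not a formality. Second, in item (2) the assertion that the translated modal axioms together with Necessitation are equivalent to the identities (M1)--(M5) is precisely the non-trivial content of \cite{CCDVR17}; deferring it wholesale is acceptable here only because the statement under review is itself a citation, but it is the part of (2) that actually requires work.
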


We are now ready to prove one of the main results in the article.

\begin{theorem} \label{TEO: finite strong completeness - general case}
$\mathrm{S5}(\mathcal{L})$ is finitely strongly complete with respect to $\mathrm{S5}([0,1]_\textrm{\L})$.
\end{theorem}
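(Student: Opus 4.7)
The plan is to deduce the theorem from Corollary~\ref{COR: mmv generadas como cuasi por...}(3), which says that $\mathbb{MMV}$ is generated as a quasivariety by $\langle [0,1]_\text{\L}^\omega, \exists_\vee, \forall_\wedge\rangle$, together with the algebraizability of $\mathrm{S5}(\mathcal{L})$ by $\mathbb{MMV}$. Soundness of $\mathrm{S5}(\mathcal{L})$ with respect to $\mathrm{S5}([0,1]_\text{\L})$ is routine: every axiom is a tautology in any safe $[0,1]_\text{\L}$-structure, and the rules of Modus Ponens and Necessitation preserve global validity (here is where the infima/suprema needed to interpret $\square$ and $\lozenge$ exist automatically, since $[0,1]_\text{\L}$ is a complete chain).

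For completeness, suppose $\Gamma$ is finite and $\Gamma \nvdash_{\mathrm{S5}(\mathcal{L})} \varphi$. By algebraizability, this failure is equivalent to the failure, in $\mathbb{MMV}$, of the quasi-identity
\[
\bigwedge_{\gamma \in \Gamma} (\gamma \approx 1) \;\Rightarrow\; \varphi \approx 1.
\]
Finiteness of $\Gamma$ is essential: it guarantees a single quasi-identity, so we can invoke quasivariety generation. By Corollary~\ref{COR: mmv generadas como cuasi por...}(3), the quasi-identity above must already fail in $\mathbf{A}_\omega := \langle [0,1]_\text{\L}^\omega, \exists_\vee, \forall_\wedge\rangle$. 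Hence there is a valuation $v\colon Prop \to [0,1]_\text{\L}^\omega$ such that $\gamma^{\mathbf{A}_\omega}(v) = 1$ for every $\gamma \in \Gamma$ while $\varphi^{\mathbf{A}_\omega}(v) \neq 1$.

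From the valuation $v$ I would build the safe structure $\mathbf{K} := \langle \omega, e, [0,1]_\text{\L}\rangle$ by setting $e(x,p) := v(p)(x)$ for $x \in \omega$ and $p \in Prop$. The structure is safe because $[0,1]_\text{\L}$ is a complete chain, so all the relevant infima and suprema exist. A straightforward induction on $\psi$ then shows that
\[
\|\psi\|_{\mathbf{K},x} = \psi^{\mathbf{A}_\omega}(v)(x) \quad \text{for all } x \in \omega,
\]
using the pointwise interpretation of the MV-operations and the definitions of $\exists_\vee, \forall_\wedge$ to handle the $\square$/$\lozenge$ cases. Consequently $\mathbf{K}$ is a model of $\Gamma$ (each $\gamma^{\mathbf{A}_\omega}(v)$ equals the top constant function $1$) but not of $\varphi$ (there is some $x \in \omega$ with $\varphi^{\mathbf{A}_\omega}(v)(x) \neq 1$), witnessing $\Gamma \not\vDash_{\mathrm{S5}([0,1]_\text{\L})} \varphi$.

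I do not expect a genuine obstacle here, since the heavy lifting has been done in Lemma~\ref{LEMA: fep para funcionales} and Theorem~\ref{TEO: MMV generada como cuasi por las finitas}; what remains is essentially bookkeeping. The only point that deserves care is the translation between the algebraic failure in $\mathbf{A}_\omega$ and the semantic counterexample: one must check that the induction identifying $\|\psi\|_{\mathbf{K},x}$ with the coordinate-$x$ value of $\psi^{\mathbf{A}_\omega}(v)$ really matches the definitions of $\exists_\vee$ and $\forall_\wedge$ (which produce constant functions), so that $\mathbf{K}$ indeed has domain $\omega$ and not some richer index set. The restriction to finite $\Gamma$ in the statement is unavoidable, as already noted in the introduction, because $\mathrm{S5}([0,1]_\text{\L})$ is not finitary.
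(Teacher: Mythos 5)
Your proposal is correct and follows essentially the same route as the paper: algebraize the failure of derivability as a quasi-identity, invoke Corollary~\ref{COR: mmv generadas como cuasi por...}(3) to refute it in $\langle [0,1]_\text{\L}^\omega, \exists_\vee, \forall_\wedge\rangle$, and read off a safe countermodel over $\omega$. The only cosmetic difference is in the soundness direction, where the paper cites the prior completeness theorem for $\mathrm{S5}(\mathbb{MV}_\mathrm{to})$ instead of verifying the axioms and rules directly; both are fine.
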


\begin{proof}
Let $\varphi_1, \ldots, \varphi_n, \varphi$ be formulas in the monadic language. If $\{\varphi_1,\ldots,\varphi_n\} \vdash_{\mathrm{S5}(\mathcal{L})} \varphi$, then, by the (soundness part of the) non-standard completeness theorem (item 1 in the previous theorem), we have that $\{\varphi_1,\ldots,\varphi_n\} \vDash_{\mathrm{S5}(\mathbb{MV}_\mathrm{to})} \varphi$ and, in particular, $\{\varphi_1,\ldots,\varphi_n\} \vDash_{\mathrm{S5}([0,1]_\textrm{\L})} \varphi$.

Conversely, assume $\{\varphi_1,\ldots,\varphi_n\} \nvdash_{\mathrm{S5}(\mathcal{L})} \varphi$. Since $\mathbb{MMV}$ is the equivalent algebraic semantics of $\mathrm{S5}(\mathcal{L})$ (item 2 in the previous theorem), we have that $\mathbb{MMV} \nvDash \bigwedge_{i=1}^n \varphi_i \approx 1 \Rightarrow \varphi \approx 1$. Using Corollary \ref{COR: mmv generadas como cuasi por...}, there is a valuation $h$ in $\langle [0,1]_\textrm{\L}^\omega, \exists_\vee, \forall_\wedge\rangle$ such that $h(\varphi_i) = 1$ for $1 \leq i \leq n$, but $h(\varphi) \ne 1$. Let $\mathbf{K} := \langle \omega, e, [0,1]_\textrm{\L}\rangle$, where $e\colon Prop \times \omega \to [0,1]_\textrm{\L}$ is defined by $e(p,m) := h(p)(m)$ for $p \in Prop$ and $m \in \omega$. Then, $\mathbf{K}$ is a safe structure which is a model of $\{\varphi_1,\ldots,\varphi_n\}$, but not a model of $\varphi$. Thus, $\{\varphi_1,\ldots,\varphi_n\} \nvDash_{\mathrm{S5}([0,1]_\textrm{\L})} \varphi$.
\end{proof}

\subsection{The logic based on bounded universe models}

We can define an interesting extension of $\mathrm{S5}([0,1]_\textrm{\L})$ by restricting interpretations to models whose universes are of bounded size. More precisely, given a natural number $k$ we consider the logic $\mathrm{S5}_k([0,1]_\textrm{\L})$ defined in the same way as $\mathrm{S5}([0,1]_\textrm{\L})$ but restricting interpretations to models $\mathbf{K} := \langle X, e, \mathbf{A}\rangle$ where $|X| \leq k$. In this section we show that it suffices to add one axiom to $\mathrm{S5}(\mathcal{L})$ to get a formal system that is finitely strongly complete with respect to $\mathrm{S5}_k([0,1]_\textrm{\L})$. Moreover, we show the relation between these logics with monadic MV-algebras of width $\leq k$, already studied in \cite{CDV14}.

Let $\mathrm{S5}_k(\mathcal{L})$ be the axiomatic extension of $\mathrm{S5}(\mathcal{L})$ by the axiom schema
\begin{equation}
\bigwedge_{1 \leq i < j \leq k+1} \square(\varphi_i \vee \varphi_j) \imp \bigvee_{i=1}^{k+1} \square \varphi_i. \tag{$W_k$}
\end{equation}
Since $\mathbb{MMV}$ is the equivalent algebraic semantics of $\mathrm{S5}(\mathcal{L})$, it follows that the equivalent algebraic semantics of $\mathrm{S5}_k(\mathcal{L})$ is the subvariety of $\mathbb{MMV}$, which we denote by $\mathbb{MMV}_k$, determined by the equation:
\begin{equation}
\bigwedge_{1 \leq i < j \leq k+1} \forall(x_i \vee x_j) \imp \bigvee_{i=1}^{k+1} \forall x_i \approx 1. \tag{$W_k$}
\end{equation}
A monadic MV-algebra that satisfies identity $W_k$ is said to have {\em width} less than or equal to $k$. The following theorem explains this terminology. An {\em orthogonal set} in a (monadic) MV-algebra $\mathbf{A}$ is a subset $S \subseteq A \setminus \{1\}$ such that $x \vee y = 1$ for every $x,y \in S$, $x \ne y$.

\begin{theorem} \label{TEO: equivalencias para ancho k}
Let $\langle \mathbf{A}, \exists, \forall\rangle$ be an FSI monadic MV-algebra. The following conditions are equivalent:
\begin{enumerate}[$(1)$]
\item $\langle \mathbf{A}, \exists, \forall\rangle$ satisfies the identity $(W_k)$;
\item any orthogonal set in $\mathbf{A}$ has at most $k$ elements;
\item there are prime filters $P_1, \ldots, P_r$ in $\mathbf{A}$, $r \leq k$, such that $\bigcap_{i=1}^r P_i = \{1\}$ and $P_i \cap \exists A = \{1\}$ for $1 \leq i \leq r$.
\end{enumerate}
\end{theorem}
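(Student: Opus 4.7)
The proof follows the cycle $(1)\Rightarrow(2)\Rightarrow(3)\Rightarrow(1)$, using at key steps the $\mathcal{L}$-functional representation with witnesses $\mathbf{A}\le\mathbf{C}^X$ from Theorem \ref{TEO: las fsi son funcionales con para todo que se alcanza}. For $(1)\Rightarrow(2)$, assume $\{a_1,\ldots,a_{k+1}\}\subseteq A\setminus\{1\}$ is orthogonal. Then $a_i\vee a_j=1$ yields $\forall(a_i\vee a_j)=1$ for $i\ne j$, so the antecedent of $(W_k)$ equals $1$; but each $\forall a_i\in\exists A$, which is totally ordered by Lemma \ref{LEMA: props basicas}(5), and $\forall a_i\le a_i<1$, so $\bigvee_i\forall a_i=\max_i\forall a_i<1$, contradicting $(W_k)$.

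For $(3)\Rightarrow(1)$ I factor as $(3)\Rightarrow(2)\Rightarrow(1)$. For $(3)\Rightarrow(2)$: the quotients $\mathbf{A}/P_i$ are MV-chains and, since $\bigcap P_i=\{1\}$, the product map $\mathbf{A}\hookrightarrow\prod_{i=1}^r\mathbf{A}/P_i$ is an injective MV-homomorphism. If $\{b_1,\ldots,b_s\}\subseteq A\setminus\{1\}$ is orthogonal, then for each $l$ some coordinate $c_l\in\{1,\ldots,r\}$ makes the image of $b_l$ lie below $1$, and $b_l\vee b_{l'}=1$ forces the image of $b_{l'}$ at $c_l$ to be $1$; hence $c_l\ne c_{l'}$, giving $s\le r\le k$. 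For $(2)\Rightarrow(1)$: suppose $(W_k)$ fails at $x_1,\ldots,x_{k+1}\in A$, so that $\beta:=\bigwedge_{i<j}\forall(x_i\vee x_j)>\alpha:=\bigvee_i\forall x_i$ in $\exists A$. In the functional representation $\beta$ and $\alpha$ are constants, and at every $y\in X$ the second smallest of $\{x_1(y),\ldots,x_{k+1}(y)\}$ equals the value at $y$ of $\bigwedge_{i<j}(x_i\vee x_j)$ and is therefore $\ge\beta$; so at most one index $i$ has $x_i(y)<\beta$. Setting $b_i:=\beta\imp x_i\in A$, the support $\{y:b_i(y)<1\}=\{y:x_i(y)<\beta\}$ is non-empty (because $\forall x_i\le\alpha<\beta$ and $\forall x_i$ is attained at a witness), and these supports are pairwise disjoint, so $\{b_1,\ldots,b_{k+1}\}$ is an orthogonal set of size $k+1$, violating (2).

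For $(2)\Rightarrow(3)$, the delicate step, I use the functional representation once more. After collapsing $X$ by $x\sim x'\iff a(x)=a(x')$ for all $a\in A$, assume $X$ is separating. The sets $P_x:=\{a:a(x)=1\}$ are prime filters of $\mathbf{A}$ (the quotient $a\mapsto a(x)$ embeds into the chain $\mathbf{C}$), satisfy $P_x\cap\exists A=\{1\}$ (since elements of $\exists A$ are constant in the functional rep), and $\bigcap_{x\in X}P_x=\{1\}$ by separation. To obtain (3) with $r\le k$ I take $X_0\subseteq X$ minimal among subsets still separating $\mathbf{A}$; for each $y\in X_0$, minimality provides $a\ne a'$ in $A$ with $a|_{X_0\setminus\{y\}}=a'|_{X_0\setminus\{y\}}$ and $a(y)\ne a'(y)$, so one of $a\imp a'$ or $a'\imp a$, call it $b_y$, belongs to $A\setminus\{1\}$ and equals $1$ on $X_0\setminus\{y\}$. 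The family $\{b_y:y\in X_0\}$ is pairwise orthogonal of size $|X_0|$: indeed $(b_y\vee b_{y'})|_{X_0}=1$ and the restriction $\mathbf{A}\to\mathbf{C}^{X_0}$ is injective, so $b_y\vee b_{y'}=1$ in $\mathbf{A}$. Hence (2) forces $|X_0|\le k$, yielding the desired $P_1,\ldots,P_{|X_0|}$.

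The main obstacle is establishing the existence of a minimal separating subset $X_0\subseteq X$ when $X$ is infinite: a descending chain of separating subsets need not have a separating intersection, so Zorn's lemma does not apply directly. One resolves this by combining a transfinite well-ordering of $X$ with removal of redundant points, using at each limit stage the hypothesis (2) to prevent generating an infinite orthogonal set; alternatively, one reduces via the finite embeddability property (Lemma \ref{LEMA: fep para funcionales}) to the case of finitely generated subalgebras, where minimal separating sets trivially exist, and assembles the global conclusion by a compactness-type argument.
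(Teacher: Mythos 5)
The paper gives no argument of its own here (it simply cites \cite[Theorem 2.1]{CCDVR21-Godel}), so your attempt has to be judged on its own terms. Your implications $(1)\Rightarrow(2)$, $(2)\Rightarrow(1)$ and $(3)\Rightarrow(2)$ are correct: the use of the functional representation with witnesses from Theorem \ref{TEO: las fsi son funcionales con para todo que se alcanza} to build the orthogonal set $\{\beta\imp x_1,\ldots,\beta\imp x_{k+1}\}$ in $(2)\Rightarrow(1)$ is sound (the witness for $\forall x_i$ guarantees a non-trivial support, constancy of $\beta\in\exists A$ gives disjointness of supports, and injectivity of the representation upgrades pointwise joins to joins in $\mathbf{A}$).

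The gap is in $(2)\Rightarrow(3)$, and it is exactly the one you flag: the existence of a minimal separating subset $X_0\subseteq X$. The intersection of a descending chain of separating subsets need not separate, and neither of your proposed repairs is carried out. The transfinite-removal scheme breaks at limit stages: to certify that $\bigcap_{\alpha<\lambda}X_\alpha$ still separates a pair $a\ne b$ you are driven to an increasing sequence of removed points witnessing $a\ne b$, and hypothesis (2) gives no visible control there, because the elements you would like to declare orthogonal are only known to equal $1$ on a set that is not yet known to separate. The FEP/compactness suggestion is circular as stated: condition (3) is an existential statement about filters of $\mathbf{A}$, not a property transported from finite partial subalgebras, and for FSI algebras it is equivalent to the very identity $(W_k)$ under discussion. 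A clean repair stays with filters rather than points. Every prime filter contains a minimal prime filter (here Zorn does apply, since the intersection of a descending chain of prime filters is prime), and distinct minimal primes $Q,Q'$ admit $a\in Q\setminus Q'$ and $b\in Q'\setminus Q$ with $a\vee b=1$; taking, for each $i$, the join $c_i:=\bigvee_{j\ne i}u_{ij}$ of such elements $u_{ij}\in Q_j\setminus Q_i$ turns $k+1$ distinct minimal primes into an orthogonal set of size $k+1$, so (2) bounds the number of minimal primes by $k$. Finally, each minimal prime $Q$ of an FSI monadic MV-algebra meets $\exists A$ trivially (if $c\in Q\cap\exists A$ with $c\ne 1$, pick $b\notin Q$ with $c\vee b=1$; then $1=\forall(c\vee b)=c\vee\forall b$ forces $\forall b=1$ in the chain $\exists\mathbf{A}$, hence $b=1\in Q$, a contradiction), and the minimal primes intersect to $\{1\}$ because all primes do. Substituting this for your point-based argument closes the proof.
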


\begin{proof}
Identical to that in \cite[Theorem 2.1]{CCDVR21-Godel}.
\end{proof}

As a consequence of the last theorem it is clear that $\mathbb{MMV}_k \subseteq \mathbb{MMV}_{k+1}$ for every $k$. If a monadic MV-algebra satisfies $W_k$ but does not satisfy $W_{k-1}$ we say that it has {\em width $k$}. Observe that, by the last theorem, an FSI monadic MV-algebra of width $k$ must have $k$ prime filters $P_1, \ldots, P_k$ satisfying the conditions in item (3).

In order to obtain the necessary representation theorem for FSI algebras in $\mathbb{MMV}_k$ that yields the desired completeness theorem, we need first the following technical representation theorem.

\begin{lemma} \label{LEMA: ampliacion de filtros de Rutledge}
Let $\langle \mathbf{A}, \exists, \forall\rangle$ be an FSI monadic MV-algebra and let $\{P_i: i \in I\}$ be a family of prime filters of $\mathbf{A}$ such that $P_i \cap \exists A = \{1\}$ for each $i \in I$ and $\bigcap_{i \in I} P_i = \{1\}$. Then, there is a family of prime filters $\{Q_i: i \in I\}$ of $\mathbf{A}$ such that:
\begin{enumerate}[$(1)$]
\item $P_i \subseteq Q_i$ for each $i \in I$,
\item $Q_i \cap \exists A = \{1\}$ for each $i \in I$,
\item $\bigcap_{i \in I} Q_i = \{1\}$,
\item for every $i \in I$ and $a \in A \setminus Q_i$, there are a positive integer $n$ and an element $c \in \exists A \setminus \{1\}$ such that $a^n \imp c \in Q_i$.
\end{enumerate}
\end{lemma}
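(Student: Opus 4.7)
The plan is to build the $Q_i$'s by a Zorn's-Lemma maximality argument and then verify the four conditions in turn; condition~(3) will be the most delicate point.

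For each fixed $i \in I$, the first step is to consider the family $\mathcal{F}_i$ of filters $F \subseteq A$ with $P_i \subseteq F$ and $F \cap \exists A = \{1\}$. By hypothesis $P_i \in \mathcal{F}_i$, and since both conditions are preserved under unions of chains, Zorn's Lemma produces a maximal $Q_i \in \mathcal{F}_i$. Conditions~(1) and~(2) are built into the definition. Condition~(4) follows from maximality: if $a \notin Q_i$, the filter $F_a$ generated by $Q_i \cup \{a\}$ has the explicit form $\{x : q * a^n \leq x \text{ for some } q \in Q_i \text{ and some positive integer } n\}$ and properly extends $Q_i$; by maximality some $c \in \exists A \setminus \{1\}$ belongs to $F_a$, so $q * a^n \leq c$ for some $q \in Q_i$, and residuation yields $q \leq a^n \imp c$, hence $a^n \imp c \in Q_i$ by upward closure.

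Primeness of $Q_i$ comes from the same $\exists A$-maximality: given $a, b \notin Q_i$ with $a \vee b \in Q_i$, apply~(4) to obtain $c_a, c_b \in \exists A \setminus \{1\}$ and $q_a, q_b \in Q_i$ with $q_a * a^n \leq c_a$, $q_b * b^n \leq c_b$ (common exponent $n$). Combining them via the inequality $(a \vee b)^{2n} \leq a^n \vee b^n$ (verified in each MV-chain and lifted by subdirect representation) together with distributivity of $*$ over $\vee$, one gets $q * (a \vee b)^{2n} \leq c_a \vee c_b$ for $q := q_a \wedge q_b \in Q_i$. Since $a \vee b \in Q_i$, the left side lies in $Q_i$, forcing $c_a \vee c_b \in Q_i \cap \exists A = \{1\}$. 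But $\exists A$ is totally ordered because $\mathbf{A}$ is FSI (Lemma~\ref{LEMA: props basicas}(5)), so $c_a \vee c_b \in \{c_a, c_b\}$, contradicting $c_a, c_b \ne 1$.

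The hard part is~(3), since the componentwise Zorn extension may well enlarge the $P_i$'s past what~(3) can accommodate. I will handle this by running Zorn's Lemma on \emph{tuples}: let $\mathcal{T}$ be the set of families $(F_i)_{i \in I}$ of filters satisfying (1), (2), and (3), ordered componentwise. The initial family $(P_i)$ lies in $\mathcal{T}$ by hypothesis. In the intended width-$k$ applications (via Theorem~\ref{TEO: equivalencias para ancho k}) $I$ is finite, so chains in $\mathcal{T}$ have componentwise unions in $\mathcal{T}$: if some $b \ne 1$ lay in $\bigcap_i \bigcup_\alpha F_i^\alpha$, then $b \in F_i^{\alpha_i}$ for some $\alpha_i$, and the maximum of the finitely many $\alpha_i$'s would give a single stage at which $b$ belonged to every $F_i$, contradicting~(3) there. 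A maximal tuple $(Q_i)$ then satisfies (1)--(3) by design, and~(4) should follow from the same residuation trick, now with a case split: enlarging $Q_i$ by $a \notin Q_i$ must violate (2) or~(3). The (2)-violation case gives~(4) directly as in the second paragraph, and the (3)-violation case gives $b \in \bigcap_{j \ne i} Q_j \setminus \{1\}$ with $q * a^n \leq b$, from which $c := \exists b$ lies in $\exists A$ with $b \leq c$, yielding $a^n \imp c \in Q_i$ provided $c \ne 1$. Ruling out $\exists b = 1$ is the step I expect to require the most care; I would approach it either by iterating the maximality argument (replacing $b$ by an element produced by $\exists$-powers and a further maximality step) or by strengthening the Zorn family defining $\mathcal{T}$ to enforce~(4) directly.
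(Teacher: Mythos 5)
Your single-filter Zorn argument cleanly delivers (1), (2), (4) and primeness, but, as you recognize, it says nothing about (3), and the tuple-Zorn patch you propose does not close the remaining gaps. Concretely: (i) your chain-union argument for preserving (3) requires $I$ finite, whereas the lemma is stated for an arbitrary index set (and is used in that generality as a stepping stone, even if the width-$k$ application is finite); (ii) in the maximal-tuple setting, maximality is now relative to (1)--(3) jointly, so when adjoining $a \notin Q_i$ violates only (3) you obtain some $b \ne 1$ with $q * a^n \leq b$ and $b \in \bigcap_{j \ne i} Q_j$, and your candidate $c := \exists b$ genuinely can equal $1$ (e.g.\ any non-unit element whose existential closure is $1$, which abounds in functional algebras), so (4) is not established; and (iii) since your primeness proof was derived from (4), primeness of the $Q_i$ in the tuple construction is also left open. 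The unresolved case is not a technicality: it is exactly the point where one must exploit the monadic structure, and nothing in your maximality setup forces the ``obstruction'' $b$ to be controlled by an element of $\exists A$.

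The paper avoids Zorn's Lemma entirely and constructs $Q_i$ explicitly: it sets $F_i := \{a/P_i : c/P_i < a^n/P_i \text{ for all } c \in \exists A \setminus \{1\} \text{ and all } n\}$ in the quotient $\mathbf{A}/P_i$, checks this is a filter, and takes $Q_i$ to be its preimage. Properties (1), (2), (4) are then immediate from the definition (in particular (4) is built in: $a \notin Q_i$ means precisely that $a^n/P_i \leq c/P_i$ for some $n$ and some $c \in \exists A \setminus \{1\}$). Property (3) is secured by a preliminary arithmetic fact that your proposal lacks and that is the real content of the lemma: for every $a \in A$ there is $i \in I$ with $a^2 \imp \forall a \in P_i$ (proved by prelinearity, the identity $\forall(a^2) = (\forall a)^2$ from Lemma \ref{LEMA: prop de los cuantif en MMV}, and the fact that $\exists \mathbf{A}$ is a chain). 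This supplies, for each $a \in \bigcap_i Q_i$, an index $i$ forcing $\forall a \in Q_i \cap \exists A = \{1\}$, hence $a = 1$. If you want to salvage your approach, you would need to import this fact (or an equivalent) to handle your $\exists b = 1$ case; without it the maximality argument alone does not suffice.
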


\begin{proof}
We first prove a property of the family $\{P_i: i \in I\}$: for every $a \in A$ there is $i \in I$ such that $a^2 \imp \forall a \in P_i$. Indeed, assume $a^2 \imp \forall a \not\in P_i$ for every  $i \in I$. Then $\forall a \imp a^2 \in P_i$ for every $i \in I$, so $\forall a \leq a^2$. Thus, $\forall a \leq \forall(a^2) = (\forall a)^2 \leq \forall a$, which shows that $(\forall a)^2 = \forall a$. Since $\exists \mathbf{A}$ is an MV-chain, it follows that $\forall a = 0$ or $\forall a = 1$. By assumption $\forall a \ne 1$, so $\forall a = 0$ and $2(\neg a) = \neg a^2 = a^2 \imp 0 \not\in P_i$ for every $i \in I$. But $2a \vee 2(\neg a) = 2(a \vee \neg a) = 1 \in P_i$, so $2a \in P_i$ for every $i \in I$. Hence $2a = 1$, and $1 = \forall 2a = 2 \forall a = 0$, a contradiction.

In order to define the filter $Q_i$ we first define a filter $F_i$ of the quotient algebra $\mathbf{A}/P_i$. For each $i \in I$ put
\[
F_i := \{a/P_i \in A/P_i: c/P_i < a^n/P_i \text{ for every } c \in \exists A \setminus \{1\} \text{ and every positive integer } n\}.
\]
We claim that $F_i$ is a filter of $\mathbf{A}/P_i$. It is easy to see that $1/P_i \in F_i$ and that $F_i$ is an increasing subset of $\mathbf{A}/P_i$. In addition, let $a_1/P_i,a_2/P_i \in F_i$, and consider a positive integer $n$ and an element $c \in \exists A \setminus \{1\}$. Note that, since $P_i$ is a prime filter, $\mathbf{A}/P_i$ is totally ordered, so we can assume $a_1/P_i \leq a_2/P_i$. Hence, $(a_1a_2)^n/P_i \geq (a_1 \wedge a_2)^{2n}/P_i = a_1^{2n}/P_i > c/P_i$. Hence $a_1a_2/P_i \in F_i$.

Now define $Q_i := \{a \in A: a/P_i \in F_i\}$ for $i \in I$. Clearly $Q_i$ is a prime filter on $\mathbf{A}$ containing $P_i$. It remains to show that the family $\{Q_i: i \in I\}$ has the properties 2-4 stated in the Lemma. To show property 2, suppose $Q_i \cap \exists A \ne \{1\}$. Let $c \in \exists A \setminus \{1\}$ be such that $c \in Q_i$. Then $c/P_i \in F_i$, which is a contradiction from the definition of $F_i$ (take $n = 1$). Now we prove property 3. Let $a \in \bigcap_{i \in I} Q_i$. Using the property of the family $\{P_i: i \in I\}$ shown at the start of the proof, there is $i \in I$ such that $a^2 \imp\forall a \in P_i$, so $a^2 \imp \forall a \in Q_i$. Since $a \in Q_i$, it follows that $\forall a \in Q_i$. Thus, $\forall a = 1$, so $a = 1$. Finally we prove property 4. Let $a \in A \setminus Q_i$. Then there is $c \in \exists A \setminus \{1\}$ and a positive integer $n$ such that $a^n/P_i \leq c/P_i$, that is, $a^n \imp c \in P_i \subseteq Q_i$.
\end{proof}

Observe that, if we apply Theorem \ref{TEO: las fsi son funcionales con para todo que se alcanza} to an FSI monadic MV-algebra $\langle \mathbf{A}, \exists, \forall\rangle$ of width $k$, we obtain an MV-chain $\mathbf{C}$ and a non-empty set $X$ such that $\mathbf{A} \leq \mathbf{C}^X$ and the equations in \eqref{EQ: testigos} (see p. \pageref{EQ: testigos}) are satisfied. However, Theorem \ref{TEO: las fsi son funcionales con para todo que se alcanza} says nothing about $|X|$. The following theorem guarantees that we can take $|X| = k$ for algebras of width $k$.

\begin{theorem} \label{TEO: FSI de ancho k son k funcionales}
Let $\langle \mathbf{A}, \exists, \forall\rangle$ be an FSI monadic MV-algebra of width $k$. Then there is a totally ordered MV-algebra $\mathbf{V}$ and an embedding $\alpha\colon \langle \mathbf{A},\exists, \forall\rangle \to \langle \mathbf{V}^k, \exists_\vee, \forall_\wedge\rangle$.
\end{theorem}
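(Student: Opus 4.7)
My plan is to realise $\mathbf{A}$ as a monadic subalgebra of $\langle \mathbf{V}^k, \exists_\vee, \forall_\wedge \rangle$ by taking $\mathbf{V}$ to be an amalgamation of $k$ suitable totally ordered quotients of $\mathbf{A}$ glued along $\exists A$.

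First, I would apply Theorem \ref{TEO: equivalencias para ancho k} and the remark following it to extract $k$ prime filters $P_1,\ldots,P_k$ of $\mathbf{A}$ with $\bigcap_i P_i=\{1\}$ and $P_i\cap \exists A=\{1\}$. Then I would invoke Lemma \ref{LEMA: ampliacion de filtros de Rutledge} to enlarge each $P_i$ to a prime filter $Q_i$ retaining the same two properties and moreover satisfying the archimedean condition (4): whenever $a\notin Q_i$, some power $a^n$ satisfies $a^n \imp c\in Q_i$ for a suitable $c\in \exists A\setminus\{1\}$.

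Next, set $\mathbf{V}_i:=\mathbf{A}/Q_i$; each is a totally ordered MV-algebra since $Q_i$ is prime, and the condition $Q_i\cap \exists A=\{1\}$ makes the quotient map injective on $\exists A$, so $\exists A$ appears as a common sub-MV-algebra of the $\mathbf{V}_i$. Using the amalgamation property for totally ordered MV-algebras (mentioned in the introduction), I would iteratively amalgamate $\mathbf{V}_1,\ldots,\mathbf{V}_k$ over $\exists A$ to obtain a totally ordered MV-algebra $\mathbf{V}$ together with embeddings $\beta_i\colon \mathbf{V}_i\to \mathbf{V}$ whose restrictions to $\exists A$ all coincide in a single map $\beta\colon \exists A\hookrightarrow \mathbf{V}$. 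Now define $\alpha\colon \mathbf{A}\to \mathbf{V}^k$ by $\alpha(a)(i):=\beta_i(a/Q_i)$. Coordinate-wise $\alpha$ is an MV-homomorphism, and injectivity follows at once from $\bigcap Q_i=\{1\}$: if $\alpha(a)=\alpha(b)$, then the injectivity of each $\beta_i$ forces $a\equiv b\pmod{Q_i}$ for every $i$, hence $a\leftrightarrow b=1$.

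The crucial step, and the one I expect to be the main obstacle, is showing that $\alpha$ preserves $\forall$ (equivalently, $\exists$, by Lemma \ref{LEMA: prop de los cuantif en MMV}(1)). Since $\forall a\in \exists A$, the tuple $\alpha(\forall a)$ has constant value $\beta(\forall a)$, while $\forall_\wedge \alpha(a)$ has constant value $m:=\min_j \beta_j(a/Q_j)$; the inequality $\beta(\forall a)\leq m$ is immediate from $\forall a\leq a$. For the reverse inequality, since $\mathbf{V}$ is totally ordered and only finitely many elements are involved, the minimum is attained at some $j_0$, so the task reduces to producing $j_0$ with $a\imp \forall a\in Q_{j_0}$ (then $a/Q_{j_0}=\forall a/Q_{j_0}$ and $\beta_{j_0}(a/Q_{j_0})=\beta(\forall a)$).

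This final subclaim is where the archimedean property (4)---not merely primeness---of the $Q_i$'s is indispensable. I would argue it by contradiction: assuming $a\imp \forall a\notin Q_j$ for every $j$, property (4) applied to $a\imp \forall a$ supplies integers $n_j$ and non-unit elements $c_j\in \exists A$ with $(a\imp\forall a)^{n_j}\imp c_j\in Q_j$; setting $N:=\max_j n_j$ and $c:=\bigvee_j c_j<1$ in the totally ordered $\exists A$, the upward closure of each $Q_j$ gives $(a\imp\forall a)^N\imp c\in\bigcap_j Q_j=\{1\}$, i.e.\ $(a\imp\forall a)^N\leq c$ in $\mathbf{A}$. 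Applying $\forall$ and using Lemma \ref{LEMA: prop de los cuantif en MMV}(4) together with Lemma 1.2(9) (i.e.\ $\forall(a\imp \forall a)=\exists a\imp \forall a$) yields $(\exists a\imp\forall a)^N\leq c<1$ in $\exists A$; combining this with the idempotency/primeness style argument that opens the proof of Lemma \ref{LEMA: ampliacion de filtros de Rutledge} (applied now to $b:=a\imp\forall a$, using the auxiliary property "for every $b$ there is $i$ with $b^2\imp \forall b\in P_i$" established there) delivers the desired contradiction. All other pieces of the proof are routine bookkeeping with amalgamation and quotients; the real work is in making the archimedean condition (4) force $\min_j\beta_j(a/Q_j)=\beta(\forall a)$.
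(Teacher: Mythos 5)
Your overall architecture coincides with the paper's: extract $P_1,\dots,P_k$ from Theorem \ref{TEO: equivalencias para ancho k}, enlarge them to $Q_1,\dots,Q_k$ via Lemma \ref{LEMA: ampliacion de filtros de Rutledge}, amalgamate the chains $\mathbf{A}/Q_i$ over $\exists\mathbf{A}$, define $\alpha(a)=(\beta_i(a/Q_i))_i$, and reduce everything to the claim that for each $a$ the quantifier is ``attained'' in some coordinate $Q_{j_0}$. Up to dualizing ($a\imp\forall a$ versus the paper's $\exists a\imp a$), all of that is the paper's proof.

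The gap is in your final step. Having derived $(a\imp\forall a)^N\leq c$ with $c\in\exists A\setminus\{1\}$, you apply $\forall$, obtaining $(\exists a\imp\forall a)^N\leq c<1$. This is not a contradiction: $\exists a\imp\forall a$ is strictly below $1$ for every $a\notin\exists A$, so this inequality carries no tension, and the subsequent appeal to the auxiliary property ``for every $b$ there is $i$ with $b^2\imp\forall b\in P_i$'' applied to $b:=a\imp\forall a$ does not visibly close the argument --- the facts you then hold ($b\notin Q_j$ for all $j$, $b^2\imp\forall b\in Q_i$ for some $i$, $(\forall b)^N\leq c$) do not combine into a contradiction by any of the routes you indicate. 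The correct move is to apply $\exists$, not $\forall$: by Lemma \ref{LEMA: prop de los cuantif en MMV}(2) applied to $\neg a$ one has $\exists(a\imp\forall a)=\exists(\exists\neg a\imp\neg a)=1$, so monotonicity and Lemma \ref{LEMA: prop de los cuantif en MMV}(3) give $c\geq\exists\bigl((a\imp\forall a)^N\bigr)=\bigl(\exists(a\imp\forall a)\bigr)^N=1$, contradicting $c\neq 1$. This is exactly how the paper finishes (phrased for $\exists a\imp a$). So the proof is reparable by a one-line substitution, but as written the contradiction is not delivered.
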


\begin{proof}
Since $\langle \mathbf{A},\exists,\forall\rangle$ is an FSI monadic MV-algebra of width $k$, there are prime filters $P_1,\ldots,P_k$ such that $\bigcap P_i = \{1\}$ and $P_i \cap \exists A = \{1\}$ for $i \in \{1,\ldots,k\}$. By Lemma \ref{LEMA: ampliacion de filtros de Rutledge} there are prime filters $Q_1, \ldots, Q_k$ with properties 1-4 stated in the Lemma. We claim that for every $a \in A$ there is $i \in \{1,\ldots,k\}$ such that $\exists a \imp a \in Q_i$. Indeed, suppose $\exists a \imp a \notin Q_i$ for any $i \in \{1,\ldots,k\}$. Using property $4$, there are elements $c_1,\ldots,c_k \in \exists A \setminus \{1\}$ and positive integers $n_1,\ldots,n_k$ such that $(\exists a \imp a)^{n_i} \imp c_i \in Q_i$ for $i \in \{1,\ldots,k\}$. Letting $n := \max \{n_1,\ldots,n_k\}$ and $c := \max\{c_1,\ldots,c_k\}$, we get that  $(\exists a \imp a)^n \imp c \in \bigcap Q_i = \{1\}$. Thus $(\exists a \imp a)^n \leq c$. Hence $c \geq \exists((\exists a \imp a)^n) = (\exists(\exists a \imp a))^n = 1$, a contradiction.

Now consider the embeddings $\nu_i|_{\exists A}\colon \exists \mathbf{A} \to \mathbf{A}/Q_i$ for $i \in \{1,\ldots,k\}$, obtained by restriction of the canonical maps $\nu_i\colon \mathbf{A} \to \mathbf{A}/Q_i$. Since the class of totally ordered MV-algebras has the amalgamation property, there is a totally ordered MV-algebra $\mathbf{V}$ and embeddings $\beta_i\colon \mathbf{A}/Q_i \to \mathbf{V}$ such that $\beta_i \circ \nu_i|_{\exists a} = \beta_j \circ \nu_j|_{\exists a}$ for $i,j \in \{1,\ldots,k\}$. Let $\alpha\colon \mathbf{A} \to \mathbf{V}^k$ be the embedding given by $\alpha(x) := (\beta_1(x/Q_1),\ldots,\beta_k(x/Q_k))$. We claim that $\alpha(\exists a) = \exists_\vee \alpha(a)$ for every $a \in A$. Indeed, for $a \in A$ there is $i \in \{1,\ldots,k\}$ such that $\exists a \imp a \in Q_i$, so $a/Q_i = \exists a/Q_i$, which implies that $\beta_j(a/Q_j) \leq \beta_j(\exists a/Q_j) = \beta_i(\exists a/Q_i) = \beta_i(a/Q_i)$ for any $j \in \{1,\ldots,k\}$.
\end{proof}

\begin{theorem} 
$\mathbb{MMV}_k$ is generated as a quasivariety by the algebras $\langle \mathbf{L}_m^k, \exists_\vee, \forall_\wedge\rangle$ for $m \geq 1$.
\end{theorem}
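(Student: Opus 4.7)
The plan is to adapt the proof of Theorem \ref{TEO: MMV generada como cuasi por las finitas} by replacing the use of Corollary \ref{CORO: las funcionales con para todo que se alcanza generan} with Theorem \ref{TEO: FSI de ancho k son k funcionales}, which crucially pins down the size of the underlying set at $k$.

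First I would take a quasi-identity $\varphi$ in the language $\{\imp, 0, \forall\}$ that fails in $\mathbb{MMV}_k$, say witnessed by elements $a_1, \ldots, a_n$ in some $\langle \mathbf{A}, \exists, \forall\rangle \in \mathbb{MMV}_k$ making all premise equations true but the conclusion $s^{\mathbf{A}}(\bar{a}) = t^{\mathbf{A}}(\bar{a})$ false. Since $\mathbb{MMV}_k$ is a variety (defined by the identity $W_k$ together with the identities defining $\mathbb{MMV}$), every algebra in it is a subdirect product of FSI algebras in $\mathbb{MMV}_k$; choosing a factor in which $s(\bar{a})$ and $t(\bar{a})$ stay separated and observing that the premise equations persist under homomorphisms, I get an FSI monadic MV-algebra $\langle \mathbf{B}, \exists, \forall\rangle$ of width $\leq k$ in which $\varphi$ fails.

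Next I would invoke Theorem \ref{TEO: FSI de ancho k son k funcionales} to embed $\langle \mathbf{B}, \exists, \forall\rangle$ into $\langle \mathbf{V}^k, \exists_\vee, \forall_\wedge\rangle$ for some totally ordered MV-algebra $\mathbf{V}$. Because $X := \{1, \ldots, k\}$ is finite, infima and suprema are attained, so the image is an $\mathcal{L}$-functional algebra with witnesses on a set of size exactly $k$; $\varphi$ still fails there under the images $h(\bar a)$ of the $\bar a$. Now I apply Lemma \ref{LEMA: fep para funcionales} to the finite set $S := \{r^{\mathbf{B}}(\bar{a}) : r \text{ subterm of a term in } \varphi\}$ to obtain natural numbers $m, n$ with $n \leq k$ and a one-to-one map $g\colon S \to L_m^n$ that preserves $0$, $\imp$, and $\forall$ on the relevant triples. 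An easy induction on subterms $r$ of terms in $\varphi$ gives $g(r^{\mathbf{B}}(\bar a)) = r^{\langle \mathbf{L}_m^n, \exists_\vee, \forall_\wedge\rangle}(g(\bar a))$, so since $g$ is injective, the quasi-identity $\varphi$ fails in $\langle \mathbf{L}_m^n, \exists_\vee, \forall_\wedge\rangle$.

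Finally, since $n \leq k$, Example \ref{EJ: canonical algebras} gives an embedding $\langle \mathbf{L}_m^n, \exists_\vee, \forall_\wedge\rangle \hookrightarrow \langle \mathbf{L}_m^k, \exists_\vee, \forall_\wedge\rangle$, and quasi-identity failure is preserved by embeddings, so $\varphi$ fails in $\langle \mathbf{L}_m^k, \exists_\vee, \forall_\wedge\rangle$. Soundness is automatic: each $\langle \mathbf{L}_m^k, \exists_\vee, \forall_\wedge\rangle$ has width $\leq k$ (the range of $\forall_\wedge$ consists of constant maps and their join-width is at most $k$, or directly: an orthogonal set projects to an orthogonal set coordinate-wise in $\mathbf{L}_m$, which has width $1$, limiting size to $k$), hence lies in $\mathbb{MMV}_k$. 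The only non-routine step is the subdirect reduction to an FSI member of $\mathbb{MMV}_k$, and even that is standard because FSI is intrinsic to the algebra and $W_k$ is an identity preserved under homomorphic images.
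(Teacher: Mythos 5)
Your proposal is correct and is exactly the argument the paper intends: its proof is the one-line remark that the argument of Theorem \ref{TEO: MMV generada como cuasi por las finitas} goes through verbatim once Corollary \ref{CORO: las funcionales con para todo que se alcanza generan} is replaced by Theorem \ref{TEO: FSI de ancho k son k funcionales}, combined with Lemma \ref{LEMA: fep para funcionales}. You have simply (and correctly) spelled out the details, including the subdirect reduction to an FSI member of $\mathbb{MMV}_k$, the bound $n \leq k$ from the lemma, the final embedding $\langle \mathbf{L}_m^n, \exists_\vee, \forall_\wedge\rangle \hookrightarrow \langle \mathbf{L}_m^k, \exists_\vee, \forall_\wedge\rangle$, and the soundness check that these algebras satisfy $(W_k)$.
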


\begin{proof}
The proof is the same as that of Theorem \ref{TEO: MMV generada como cuasi por las finitas} using Theorem \ref{TEO: FSI de ancho k son k funcionales} and Lemma \ref{LEMA: fep para funcionales}.
\end{proof}

\begin{corollary} \label{COR: mmv de ancho k generadas por 01 a la k}
$\mathbb{MMV}_k$ is generated as a quasivariety by the algebra $\langle [0,1]_\text{\rm \L}^k, \exists_\vee, \forall_\wedge\rangle$.
\end{corollary}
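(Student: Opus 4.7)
The plan is to piggyback on the theorem immediately preceding the corollary. That theorem tells us $\mathbb{MMV}_k$ is already generated as a quasivariety by the family $\{\langle \mathbf{L}_m^k, \exists_\vee, \forall_\wedge\rangle : m \geq 1\}$. Since quasivarieties are closed under taking subalgebras, it is enough to exhibit, for every $m \geq 1$, an embedding $\varepsilon_m\colon \langle \mathbf{L}_m^k, \exists_\vee, \forall_\wedge\rangle \hookrightarrow \langle [0,1]_\text{\rm \L}^k, \exists_\vee, \forall_\wedge\rangle$ of monadic MV-algebras; this then forces the quasivariety generated by $\langle [0,1]_\text{\rm \L}^k, \exists_\vee, \forall_\wedge\rangle$ to contain each $\langle \mathbf{L}_m^k, \exists_\vee, \forall_\wedge\rangle$, and hence to contain all of $\mathbb{MMV}_k$.

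First I would recall that $\mathbf{L}_m = \{0, 1/m, \ldots, m/m\}$ is literally a subalgebra of the MV-chain $[0,1]_\text{\rm \L}$. Taking the coordinatewise inclusion yields an MV-algebra embedding $\varepsilon_m\colon \mathbf{L}_m^k \to [0,1]_\text{\rm \L}^k$; the only thing left to check is that $\varepsilon_m$ commutes with the quantifier operations. But for any $f \in L_m^k$, the values $\exists_\vee(f)$ and $\forall_\wedge(f)$ are constant functions whose value is the supremum, respectively the infimum, of the finite set $\{f(x) : x \in X\} \subseteq L_m$. Since a chain embedding preserves suprema and infima of finite subsets, these values coincide whether computed in $\mathbf{L}_m$ or in $[0,1]_\text{\rm \L}$. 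Hence $\varepsilon_m$ is an embedding in the full monadic language. (This observation is exactly the embedding already mentioned at the end of Example \ref{EJ: canonical algebras}, so one can simply invoke it.)

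Finally, to close the argument I would note the reverse containment: $\langle [0,1]_\text{\rm \L}^k, \exists_\vee, \forall_\wedge\rangle$ itself lies in $\mathbb{MMV}_k$. This can be verified directly against the identity $(W_k)$ via a pigeonhole argument — given $f_1, \ldots, f_{k+1} \in [0,1]_\text{\rm \L}^k$, pick for each $i$ a point $x_i \in X$ realising the minimum of $f_i$, and since $|X| = k$ two of the $x_i$'s coincide, forcing $\bigwedge_{i<j} \forall_\wedge(f_i \vee f_j) \leq \bigvee_i \forall_\wedge f_i$ — or observed via condition (2) of Theorem \ref{TEO: equivalencias para ancho k} on orthogonal sets.

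There is no real obstacle: both directions are routine and the bulk of the content has already been done in the previous theorem. The only point where one has to be slightly careful is justifying that $\varepsilon_m$ respects $\exists_\vee$ and $\forall_\wedge$, but this is a one-line check because those operations are defined in exactly the same way in the two algebras and involve only finite sups and infs over the fixed index set $X$ of size $k$.
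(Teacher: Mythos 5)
Your proposal is correct and matches the paper's (implicit) argument: the paper states this corollary without a written proof, but, exactly as in Corollary \ref{COR: mmv generadas como cuasi por...}, it is meant to follow from the preceding theorem together with the embedding of $\langle \mathbf{L}_m^k, \exists_\vee, \forall_\wedge\rangle$ into $\langle [0,1]_\text{\rm \L}^k, \exists_\vee, \forall_\wedge\rangle$ recorded in Example \ref{EJ: canonical algebras}. Your additional check that $\langle [0,1]_\text{\rm \L}^k, \exists_\vee, \forall_\wedge\rangle$ itself satisfies $(W_k)$ is a reasonable piece of diligence that the paper leaves tacit.
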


We are now ready to prove that $\mathrm{S5}_k(\mathcal{L})$ is finitely strongly complete with respect to the logic $\mathrm{S5}_k([0,1]_\textrm{\L})$.

\begin{theorem} \label{TEO: finite strong completeness - width k}
$\mathrm{S5}_k(\mathcal{L})$ is finitely strongly complete with respect to $\mathrm{S5}_k([0,1]_\textrm{\L})$.
\end{theorem}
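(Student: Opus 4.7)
The plan is to follow the proof of Theorem~\ref{TEO: finite strong completeness - general case} almost verbatim, substituting the variety $\mathbb{MMV}$ by the subvariety $\mathbb{MMV}_k$ and the generating algebra $\langle [0,1]_\textrm{\L}^\omega, \exists_\vee, \forall_\wedge\rangle$ by $\langle [0,1]_\textrm{\L}^k, \exists_\vee, \forall_\wedge\rangle$, whose quasivariety-generation property is supplied by Corollary~\ref{COR: mmv de ancho k generadas por 01 a la k}. By construction the equivalent algebraic semantics of $\mathrm{S5}_k(\mathcal{L})$ is $\mathbb{MMV}_k$, which furnishes the usual dictionary between syntactic derivability and failure of quasi-identities.

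For the soundness direction, suppose $\{\varphi_1,\ldots,\varphi_n\} \vdash_{\mathrm{S5}_k(\mathcal{L})} \varphi$ and let $\mathbf{K} = \langle X, e, [0,1]_\textrm{\L}\rangle$ be any safe structure with $|X| \leq k$ in which each $\varphi_i$ is true. As in the general case, the truth values under $\mathbf{K}$ are computed by a homomorphism into $\langle [0,1]_\textrm{\L}^X, \exists_\vee, \forall_\wedge\rangle$, so it suffices to show that this algebra lies in $\mathbb{MMV}_k$. By Theorem~\ref{TEO: equivalencias para ancho k}(2) this reduces to bounding the size of orthogonal sets: for any orthogonal family $\{f_i\}_{i \in I} \subseteq [0,1]_\textrm{\L}^X \setminus \{1\}$ the sets $Z(f_i) := \{x \in X : f_i(x) < 1\}$ are nonempty and, since $f_i \vee f_j = 1$ for $i \ne j$, pairwise disjoint, giving $|I| \leq |X| \leq k$. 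Hence $W_k$ holds in $\langle [0,1]_\textrm{\L}^X, \exists_\vee, \forall_\wedge\rangle$ and $\mathbf{K}$ must make $\varphi$ true.

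For the completeness direction, assume $\{\varphi_1,\ldots,\varphi_n\} \nvdash_{\mathrm{S5}_k(\mathcal{L})} \varphi$. Then the quasi-identity $\bigwedge_{i=1}^n \varphi_i \approx 1 \Rightarrow \varphi \approx 1$ fails in $\mathbb{MMV}_k$, so Corollary~\ref{COR: mmv de ancho k generadas por 01 a la k} yields a valuation $h$ into $\langle [0,1]_\textrm{\L}^k, \exists_\vee, \forall_\wedge\rangle$ with $h(\varphi_i) = 1$ for each $i$ and $h(\varphi) \ne 1$. Taking $\mathbf{K} := \langle \{1,\ldots,k\}, e, [0,1]_\textrm{\L}\rangle$ with $e(m,p) := h(p)(m)$ gives a safe structure of size exactly $k$, and a routine induction on formulas (using that $\exists_\vee$ and $\forall_\wedge$ are precisely the sup and inf over the index set $\{1,\ldots,k\}$) shows that $\|\psi\|_{\mathbf{K},m} = h(\psi)(m)$ for every formula $\psi$ and every $m$. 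Consequently $\mathbf{K}$ models each $\varphi_i$ but not $\varphi$, refuting $\{\varphi_1,\ldots,\varphi_n\} \vDash_{\mathrm{S5}_k([0,1]_\textrm{\L})} \varphi$.

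I expect no substantial obstacle: the hard work—the representation of FSI algebras of width $k$ inside $\mathbf{V}^k$ (Theorem~\ref{TEO: FSI de ancho k son k funcionales}) and the resulting quasivariety generation by a single standard algebra (Corollary~\ref{COR: mmv de ancho k generadas por 01 a la k})—is already in place, and the translation between the algebraic and structural semantics is identical to the unrestricted case. The only point requiring a brief extra verification beyond the general proof is the orthogonal-set computation needed for the soundness of $W_k$ in $k$-element-universe models, which is essentially a one-line pigeonhole argument.
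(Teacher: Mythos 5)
Your proposal is correct and the completeness half is exactly the paper's argument: algebraize via $\mathbb{MMV}_k$, invoke Corollary~\ref{COR: mmv de ancho k generadas por 01 a la k} to get a refuting valuation in $\langle [0,1]_\textrm{\L}^k, \exists_\vee, \forall_\wedge\rangle$, and read it off as a $k$-element-universe model. The only place you diverge is the soundness of $(W_k)$: the paper verifies it by a direct semantic computation in $\mathbf{K}$ (choosing, for each $x_j$, an index $i_j$ realizing the minimum and then picking $i^*$ outside $\{i_1,\ldots,i_r\}$ by pigeonhole), whereas you show the ambient algebra $\langle [0,1]_\textrm{\L}^X, \exists_\vee, \forall_\wedge\rangle$ lies in $\mathbb{MMV}_k$ by bounding orthogonal sets via Theorem~\ref{TEO: equivalencias para ancho k}(2); your disjoint-support argument is correct and is the same pigeonhole in algebraic clothing. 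To apply Theorem~\ref{TEO: equivalencias para ancho k} you should note explicitly that $\langle [0,1]_\textrm{\L}^X, \exists_\vee, \forall_\wedge\rangle$ is FSI, which is immediate from Lemma~\ref{LEMA: props basicas}(5) since its image under $\exists_\vee$ consists of the constant functions and is totally ordered; the paper's direct computation avoids needing this, and also avoids relying on a theorem whose proof is only cited from elsewhere, but your route is otherwise equally valid.
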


\begin{proof}
To prove the soundness implication it is enough to show that any safe structure $\mathbf{K} := \langle X, e, [0,1]_\textrm{\L}\rangle$ with $|X| \leq k$ is a model of the axiom $W_k$. Indeed, put $X = \{x_j: 1 \leq j \leq r\}$, where $r \leq k$, let $\varphi_1,\ldots,\varphi_{k+1}$ be formulas, and let $a_{ij} := \|\varphi_i\|_{\mathbf{K},x_j}$, $1 \leq i \leq k+1$, $1 \leq j \leq r$. For each $j$, let $i_j$ be such that $a_{i_jj} = \min \{a_{ij}: 1 \leq i \leq k+1\}$. Choose $i^*$ in $\{1,\ldots,k+1\} \setminus \{i_1,\ldots,i_r\}$. Then 
\[
\inf_{1 \leq i < i' \leq k+1} (a_{ij} \vee a_{i'j}) \leq a_{i^*j}
\]
for $1 \leq j \leq r$. Hence, for every $x \in X$ we have
\begin{align*}
\left\|\bigwedge_{1 \leq i < i' \leq k+1} \square(\varphi_i \vee \varphi_{i'})\right\|_{\mathbf{K},x} & = \inf_{1 \leq i < i' \leq k+1} \|\square(\varphi_i \vee \varphi_{i'})\|_{\mathbf{K},x} \\
& = \inf_{1 \leq i < i' \leq k+1} \left( \inf_{1 \leq j \leq r} \|\varphi_i \vee \varphi_{i'}\|_{\mathbf{K},x_j} \right) \\
& = \inf_{1 \leq i < i' \leq k+1} \left( \inf_{1 \leq j \leq r} (a_{ij} \vee a_{i'j}) \right) \\
& \leq \inf_{1 \leq j \leq r} a_{i^*j} \\
& = \|\square \varphi_{i^*}\|_{\mathbf{K},x} \\
& \leq \left\|\bigvee_{i=1}^{k+1} \square \varphi_i\right\|_{\mathbf{K},x}.
\end{align*}
This proves that $\mathbf{K}$ is a model of $W_k$.

Conversely, assume $\{\varphi_1,\ldots,\varphi_n\} \nvDash_{\mathrm{S5}_k(\mathcal{L})} \varphi$ for formulas  $\varphi_1, \ldots, \varphi_n, \varphi$ in the monadic language. Since $\mathbb{MMV}_k$ is the equivalent algebraic semantics of $\mathrm{S5}_k(\mathcal{L})$, we have that $\mathbb{MMV}_k \nvDash \bigwedge_{i=1}^n \varphi_i \approx 1 \to \varphi \approx 1$. Using Corollary \ref{COR: mmv de ancho k generadas por 01 a la k}, there is a valuation $h$ in $\langle [0,1]_\textrm{\L}^k, \exists_\vee, \forall_\wedge\rangle$ such that $h(\varphi_i) = 1$ for $1 \leq i \leq n$, but $h(\varphi) \ne 1$. Let $\mathbf{K} := \langle \{1,\ldots,k\}, e, [0,1]_\textrm{\L}\rangle$, where $e\colon Prop \times \omega \to [0,1]_\textrm{\L}$ is defined by $e(p,m) := h(p)(m)$ for $p \in Prop$ and $m \in \{1,\ldots,k\}$. Then $\mathbf{K}$ is a safe structure which is a model of $\{\varphi_1,\ldots,\varphi_n\}$, but not a model of $\varphi$. Thus, $\{\varphi_1,\ldots,\varphi_n\} \nvDash_{\mathrm{S5}_k([0,1]_\textrm{\L})} \varphi$.
\end{proof}

\section{Infinitary S5-modal logics based on $[0,1]_\text{\rm \L}$}

The completeness theorems in the previous section were necessarily {\em finite strong} completeness theorems because the logics $\mathrm{S5}([0,1]_\textrm{\L})$ and $\mathrm{S5}_k([0,1]_\textrm{\L})$ are infinitary, while the logics $\mathrm{S5}(\mathcal{L})$ and $\mathrm{S5}_k(\mathcal{L})$ are finitary. In this section we add an infinary rule to the latter calculi to get strong completeness theorems.

In \cite{Kulacka18} A. Kulacka adds an infinitary rule to Hájek's BL calculus and obtains a strong completeness theorem for the class of continuous t-norms. We consider the following modification of Ku\l acka's infinitary rule:
\begin{itemize}
\item[] $\square$Inf: $\displaystyle \underline{\square \varphi \vee (\square \alpha \imp (\square \beta)^n) \text{ for every } n \in \mathbb{N}} \atop \displaystyle {\square \varphi \vee (\square \alpha \imp \square \alpha * \square \beta)}$.
\end{itemize}
We define the logic $\mathrm{S5}(\mathcal{L})_\infty$ as the result of adding $\square$Inf to $\mathrm{S5}(\mathcal{L})$.

If $\Gamma \cup \{\varphi\}$ is a set of formulas of $\mathrm{S5}(\mathcal{L})_\infty$, the notation $\Gamma \vdash_{\mathrm{S5}(\mathcal{L})_\infty} \varphi$ means that there is a family of formulas $\{\psi_i: i \leq \xi\}$ indexed by a successor ordinal $\xi^+$ such that $\psi_\xi = \varphi$ and for each $i \leq \xi$ the formula $\varphi_i$ is an instance of an axiom or is the lower formula of an instance of an inference rule whose upper formulas are contained in the set $\{\psi_j: j < i\}$. The family $\{\psi_i: i \leq \xi\}$ is said to be a {\em proof of $\varphi$ from $\Gamma$ of length $\xi$}.

To avoid the cumbersome notation $\vdash_{\mathrm{S5}(\mathcal{L})_\infty}$, we simply write $\vdash$ in the rest of the section.

We aim to show that this calculus is strongly complete with respect to the logic $\mathrm{S5}([0,1]_\textrm{\L})$. We begin by proving the soundness part.

\begin{theorem} \label{TEO: strong standard soundness}
Let $\Gamma \cup \{\varphi\} \subseteq Fm$, then
\[
\Gamma \vdash \varphi  \text{ implies } \Gamma \vDash_{\mathrm{S5}([0,1]_\textrm{\rm \L})} \varphi.
\]
\end{theorem}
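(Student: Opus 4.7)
The plan is to proceed by transfinite induction on the length $\xi$ of the proof $\{\psi_i : i \leq \xi\}$ of $\varphi$ from $\Gamma$, showing at each stage that every model $\mathbf{K}$ of $\Gamma$ satisfies $\|\psi_i\|_{\mathbf{K},x} = 1$ for all $x \in X$. Hypotheses $\psi_i \in \Gamma$ are immediate, and axioms of $\mathrm{S5}(\mathcal{L})$ are valid in every safe structure over a totally ordered MV-algebra by the soundness half of the quoted non-standard completeness theorem; since $[0,1]_\textrm{\L}$ belongs to $\mathbb{MV}_\mathrm{to}$ and every $\mathbf{K}$ over $[0,1]_\textrm{\L}$ is automatically safe (the unit interval is a complete lattice), these cases are free. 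Modus Ponens and Necessitation preserve the property \emph{value $1$ at every point} by the standard routine verification.

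The genuinely new step is to show that the infinitary rule $\square\mathrm{Inf}$ preserves validity, which is where I would concentrate. Fixing $\mathbf{K} = \langle X, e, [0,1]_\textrm{\L}\rangle$, I would first exploit the fact that truth values of $\square$-formulas are constant across $X$ to reduce everything to three real numbers $p := \|\square\varphi\|_{\mathbf{K},x}$, $q := \|\square\alpha\|_{\mathbf{K},x}$, $r := \|\square\beta\|_{\mathbf{K},x}$ in $[0,1]$. Then I would use that in $[0,1]_\textrm{\L}$ the lattice join is the pointwise maximum, so the inductive hypothesis at index $n$ says simply $p = 1$ or $q \leq r^n$, and the conclusion to be established is $p = 1$ or $q \leq q * r$ (equivalently, $q \imp q*r = 1$).

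From there the argument splits on whether $r = 1$ (in which case $q * r = q$ and the conclusion is trivial) or $r < 1$; in the latter case the Archimedean identity $r^n = \max(0, 1 - n(1-r))$ gives $r^n \to 0$, so either $p = 1$ or the hypothesis forces $q = 0$, whence $q * r = 0$ and the conclusion is again trivial. The main obstacle is precisely this calculation: it is the one place where the proof uses that we are working over the standard t-norm $[0,1]_\textrm{\L}$ rather than a generic MV-chain, and it is exactly the fact that non-standard MV-chains can fail the Archimedean property that explains why the additional rule $\square\mathrm{Inf}$ is genuinely needed to capture $\mathrm{S5}([0,1]_\textrm{\L})$ on top of $\mathrm{S5}(\mathcal{L})$.
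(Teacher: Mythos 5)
Your proposal is correct and takes essentially the same route as the paper: the paper likewise reduces the theorem to the soundness of the single new rule $\square$Inf (handling axioms, Modus Ponens and Necessitation via the already-established finitary soundness) and then, fixing a point $x$, argues that either $\|\square\varphi\|_{\mathbf{K},x}=1$ or $\|\square\alpha\|_{\mathbf{K},x}\leq\|\square\beta\|_{\mathbf{K},x}^n$ for all $n$ forces $\|\square\beta\|_{\mathbf{K},x}=1$ or $\|\square\alpha\|_{\mathbf{K},x}=0$. The only cosmetic difference is that the paper invokes the simplicity of $[0,1]_\textrm{\L}$ at the point where you carry out the Archimedean computation $r^n=\max(0,1-n(1-r))$ explicitly; these are the same fact.
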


\begin{proof}
By Theorem \ref{TEO: finite strong completeness - width k}, it is enough to show that $\{\square \varphi \vee (\square \alpha \imp (\square \beta)^n): n \in \mathbb{N}\} \vDash_{\mathrm{S5}([0,1]_\textrm{\L})} \square \varphi \vee (\square \alpha \imp \square \alpha * \square \beta)$ for any $\alpha, \beta, \varphi \in Fm$. Let $\mathbf{K} := \langle X, e, [0,1]_\textrm{\L}\rangle$ be a model of $\square \varphi \vee (\square \alpha \imp (\square \beta)^n)$ for every $n \in \mathbb{N}$, that is, $\|\square \varphi \vee (\square \alpha \imp (\square \beta)^n)\|_{\mathbf{K},x} = 1$ for every $x \in X$ and $n \in \mathbb{N}$. Fix $x \in X$. If $\|\square \varphi\|_{\mathbf{K},x} = 1$, then $\|\square \varphi \vee (\square \alpha \imp \square \alpha * \square \beta)\|_{\mathbf{K},x} = 1$. Thus, we may assume that $\|\square \varphi\|_{\mathbf{K},x} \ne 1$. Then, $\|\square \alpha \imp (\square \beta)^n\|_{\mathbf{K},x} = 1$ for every $n \in \mathbb{N}$, so $\|\square \alpha\|_{\mathbf{K},x} \leq \|\square \beta\|_{\mathbf{K},x}^n$ for every $n \in \mathbb{N}$. Since, $[0,1]_\textrm{\L}$ is simple, we get that $\|\square \beta\|_{\mathbf{K},x} = 1$ or $\|\square \alpha\|_{\mathbf{K},x} = 0$. In any case, $\|\square \varphi \vee (\square \alpha \imp \square \alpha * \square \beta)\|_{\mathbf{K},x} = 1$.
\end{proof}

\begin{remark} \rm \label{OBS: props de la logica}
Let $\alpha$ be any formula and let $\nu$ be any BL-propositional combination of formulas that start with $\square$ or $\lozenge$. In the proofs below we make use of the following facts:
\begin{enumerate}[$(1)$]
\item \label{OBS: para todo de una o pasa a la primera} $\vdash \square(\alpha \vee \nu) \imp (\square \alpha \vee \nu)$,
\item \label{OBS: para todo de una o pasa a la segunda} $\vdash \square(\nu \vee \alpha) \imp (\nu \vee \square \alpha)$,
\item \label{OBS: para todo de una formula constante} $\vdash \nu \leftrightarrow \square \nu$.
\end{enumerate}
(1) is an axiom of $\mathrm{S5}(\mathcal{L})$, while (2) and (3) are straightforward.
\end{remark}

\begin{lemma}
Let $\Gamma \cup \{\alpha,\beta,\psi\}$ be a set of formulas. Then,
\[
\Gamma,\square\beta \vdash \psi \text{ implies } \Gamma,\square\alpha\vee\square\beta \vdash \square \alpha \vee\psi.
\]
\end{lemma}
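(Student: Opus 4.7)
The plan is to proceed by transfinite induction on the length $\xi$ of a proof of $\psi$ from $\Gamma \cup \{\square\beta\}$, showing the stronger claim that for every formula $\psi_i$ appearing at stage $i \leq \xi$ one has $\Gamma, \square\alpha \vee \square\beta \vdash \square\alpha \vee \psi_i$. The lemma is then the case $i = \xi$.

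For the initial stages, if $\psi_i$ is an axiom of $\mathrm{S5}(\mathcal{L})_\infty$ or lies in $\Gamma$, then $\Gamma \vdash \psi_i$ and hence $\Gamma \vdash \square\alpha \vee \psi_i$; if $\psi_i = \square\beta$, then the hypothesis $\square\alpha \vee \square\beta$ itself witnesses the claim. For a modus ponens step deriving $\psi_i$ from earlier $\varphi$ and $\varphi \imp \psi_i$, the inductive hypothesis provides $\square\alpha \vee \varphi$ and $\square\alpha \vee (\varphi \imp \psi_i)$, from which $\square\alpha \vee \psi_i$ follows by pure propositional reasoning already available in $\mathcal{L}$.

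For a necessitation step $\psi_i = \square\varphi$, the inductive hypothesis gives $\square\alpha \vee \varphi$; applying necessitation yields $\square(\square\alpha \vee \varphi)$, and then the axiom $\square(\varphi \vee \nu) \imp (\square\varphi \vee \nu)$ of $\mathrm{S5}(\mathcal{L})$, taken with $\nu := \square\alpha$, produces $\square\varphi \vee \square\alpha$, i.e., $\square\alpha \vee \psi_i$.

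The step for the infinitary rule $\square$Inf is the main obstacle. Here $\psi_i = \square\chi \vee (\square\gamma \imp \square\gamma * \square\delta)$ is obtained from the family $\{\square\chi \vee (\square\gamma \imp (\square\delta)^n) : n \in \mathbb{N}\}$. The inductive hypothesis supplies $\square\alpha \vee \square\chi \vee (\square\gamma \imp (\square\delta)^n)$ for every $n \in \mathbb{N}$, and the issue is that the disjunct $\square\alpha \vee \square\chi$ does not visibly sit under a single $\square$, as the rule's pattern demands. The key observation is that $\square\alpha \vee \square\chi$ is a propositional combination of formulas starting with $\square$, so Remark \ref{OBS: props de la logica}(\ref{OBS: para todo de una formula constante}) yields $\vdash (\square\alpha \vee \square\chi) \leftrightarrow \square(\square\alpha \vee \square\chi)$. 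Each premise can therefore be rewritten as $\square(\square\alpha \vee \square\chi) \vee (\square\gamma \imp (\square\delta)^n)$; a single application of $\square$Inf (with the slot $\varphi$ of the rule instantiated to $\square\alpha \vee \square\chi$ and with the rule's $\alpha,\beta$ instantiated to $\gamma,\delta$) delivers $\square(\square\alpha \vee \square\chi) \vee (\square\gamma \imp \square\gamma * \square\delta)$, and one more appeal to Remark \ref{OBS: props de la logica}(\ref{OBS: para todo de una formula constante}) collapses this to $\square\alpha \vee \square\chi \vee (\square\gamma \imp \square\gamma * \square\delta) = \square\alpha \vee \psi_i$, as required.
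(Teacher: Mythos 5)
Your proposal is correct and follows essentially the same route as the paper: induction on proof length, with the $\square$Inf case handled by boxing the extra disjunct $\square\alpha\vee\square\chi$ via Remark \ref{OBS: props de la logica}(\ref{OBS: para todo de una formula constante}) so that the rule's pattern applies, and then unboxing. The only cosmetic difference is that in the necessitation case you invoke the axiom $\square(\varphi\vee\nu)\imp(\square\varphi\vee\nu)$ directly (which needs a harmless commutation of the disjunction), where the paper cites the already-commuted version recorded as Remark \ref{OBS: props de la logica}(\ref{OBS: para todo de una o pasa a la segunda}).
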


\begin{proof}
We proceed by induction on the length of the proof of $\psi$ from $\Gamma, \square \beta$. Suppose first that there is a proof of $\psi$ from $\Gamma, \square \beta$ of length 1. If $\psi$ is an axiom or $\psi \in \Gamma$, then $\Gamma, \square \alpha \vee \square \beta \vdash \square \alpha \vee \psi$ since $\psi \vdash \square \alpha \vee \psi$. If $\psi = \square \beta$, then $\square \alpha \vee \psi = \square \alpha \vee \square \beta$ and the conclusion is trivial. Assume now that there is a proof of $\psi$ from $\Gamma, \square \beta$ of length $\xi$ and that the conclusion holds for formulas $\psi'$ whose proofs from $\Gamma, \square \beta$ are shorter than $\xi$. We can assume that $\psi$ follows by Modus Ponens, Necessitation or $\square$Inf. In the first case, there is a formula $\theta$ such that $\Gamma, \square \beta \vdash \theta$ and $\Gamma, \square \beta \vdash \theta \imp \psi$ with proofs shorter than $\xi$. By the induction hypothesis, $\Gamma, \square \alpha \vee \square \beta \vdash \square \alpha \vee \theta$ and $\Gamma, \square \alpha \vee \square \beta \vdash \square \alpha \vee (\theta \imp \psi)$. Since $\vdash (\chi \vee \theta) \imp ((\chi \vee (\theta \imp \psi)) \imp (\chi \vee \psi))$ (this holds in Basic Logic), we get that $\Gamma, \square \alpha \vee \square \beta \vdash \square \alpha \vee \psi$. If $\psi$ follows from Necessitation, then $\psi = \square \chi$ and $\Gamma, \square \beta \vdash \chi$ in less than $\xi$ steps. By the induction hypothesis, $\Gamma, \square \alpha \vee \square \beta \vdash \square \alpha \vee \chi$. Using Necessitation, Remark \ref{OBS: props de la logica}.$(\ref{OBS: para todo de una o pasa a la segunda})$ and Modus Ponens, we get that $\Gamma, \square \alpha \vee \square \beta \vdash \square \alpha \vee \square \chi$, as was to be proved. Finally, if $\psi$ follows from $\square$Inf, then $\psi = \square \chi \vee (\square \rho \imp \square \rho * \square \sigma)$ and $\Gamma, \square \beta \vdash \square \chi \vee (\square \rho \imp (\square \sigma)^n)$ in less than $\xi$ steps for every $n \in \mathbb{N}$. By the induction hypothesis, $\Gamma, \square \alpha \vee \square \beta \vdash \square \alpha \vee \square \chi \vee (\square \rho \imp (\square \sigma)^n)$ for every $n \in \mathbb{N}$. By Remark \ref{OBS: props de la logica}.$(\ref{OBS: para todo de una formula constante})$, we get that $\Gamma, \square \alpha \vee \square \beta \vdash \square(\square \alpha \vee \square \chi) \vee (\square \rho \imp (\square \sigma)^n)$ for every $n \in \mathbb{N}$. Using now $\square$Inf, we get that $\Gamma, \square \alpha \vee \square \beta \vdash \square(\square \alpha \vee \square \chi) \vee (\square \rho \imp \square \rho * \square \sigma)$. Since $\vdash (\square \gamma \vee \delta) \imp (\gamma \vee \delta)$, we finally obtain $\Gamma, \square \alpha \vee \square \beta \vdash \square \alpha \vee \square \chi \vee (\square \rho \imp \square \rho * \square \sigma)$, that is, $\Gamma, \square \alpha \vee \square \beta \vdash \square \alpha \vee \psi$.
\end{proof}

\begin{lemma} \label{LEMA: disyuncion en la hipotesis}
Let $\Gamma \cup \{\alpha,\beta,\varphi,\psi\}$ be a set of formulas. Then,
\[
\Gamma,\square\alpha \vdash \varphi \text{ and } \Gamma,\square\beta \vdash \psi \text{ imply } \Gamma,\square\alpha\vee\square\beta \vdash \varphi\vee\psi.
\]
\end{lemma}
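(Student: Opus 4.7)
The plan is to Necessitate both premises, apply the preceding lemma twice (with parameters chosen so that the two derivations can be spliced by cut), and then descend via the axiom $\square\varphi \imp \varphi$.

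Concretely, applying Necessitation to each hypothesis -- which is legitimate inside proofs from hypotheses, exactly as in the preceding lemma's proof -- upgrades them to $\Gamma,\square\alpha \vdash \square\varphi$ and $\Gamma,\square\beta \vdash \square\psi$. I would then invoke the preceding lemma on $\Gamma,\square\beta \vdash \square\psi$ with its free parameter instantiated to $\alpha$, obtaining
\[
\Gamma,\square\alpha \vee \square\beta \vdash \square\alpha \vee \square\psi,
\]
and on $\Gamma,\square\alpha \vdash \square\varphi$ with its free parameter instantiated to $\psi$, obtaining
\[
\Gamma,\square\psi \vee \square\alpha \vdash \square\psi \vee \square\varphi.
\]

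Modulo the commutativity of $\vee$, the conclusion of the first derivation is the extra hypothesis of the second, so a cut (the standard transfinite concatenation of proofs, which is valid in $\mathrm{S5}(\mathcal{L})_\infty$) yields
\[
\Gamma,\square\alpha \vee \square\beta \vdash \square\varphi \vee \square\psi.
\]
The axioms $\square\varphi \imp \varphi$ and $\square\psi \imp \psi$ yield the BL-theorem $\vdash (\square\varphi \vee \square\psi) \imp (\varphi \vee \psi)$, and Modus Ponens completes the proof.

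The main subtlety is recognizing why one cannot simply mimic the induction in the preceding lemma with the arbitrary $\psi$ in place of $\square\alpha$: the Necessitation step there used crucially that the formula being tacked on was modal, so that Remark~\ref{OBS: props de la logica} could commute $\square$ past it; with a non-modal $\psi$ in that position the argument breaks down. Pre-Necessitating the two conclusions circumvents this obstruction cleanly, since $\square\varphi$ and $\square\psi$ are themselves modal.
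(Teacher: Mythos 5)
Your proof is correct, but it takes a genuinely different route from the paper's. The paper proves this lemma by a second transfinite induction on the length of the derivation of $\varphi$ from $\Gamma,\square\alpha$, invoking the preceding lemma only in the base case $\varphi=\square\alpha$ and then handling Modus Ponens, Necessitation and $\square$Inf all over again (the Necessitation and $\square$Inf cases use the same trick you use globally: first strengthen $\Gamma,\square\beta\vdash\psi$ to $\Gamma,\square\beta\vdash\square\psi$ so that the tacked-on disjunct is modal and the axiom $\square(\nu\vee\chi)\imp(\nu\vee\square\chi)$ applies). You instead box both conclusions up front, apply the preceding lemma twice with its free parameter instantiated to $\alpha$ and to $\psi$ respectively, splice the two derivations by cut (which is indeed available here, and is used freely elsewhere in the paper, e.g.\ in the proof of Theorem \ref{TEO: prelinear extension}), and descend with $\square\varphi\imp\varphi$, $\square\psi\imp\psi$ and monotonicity of $\vee$ in BL. All the ingredients check out: global Necessitation under hypotheses is legitimate in this calculus (the paper itself writes ``Since $\Gamma,\square\beta\vdash\psi$, we have that $\Gamma,\square\beta\vdash\square\psi$''), the preceding lemma is quantified over arbitrary $\alpha$ and $\psi$ so both instantiations are licit, and commuting the disjunction before the cut is harmless. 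What your version buys is economy --- no second induction, the preceding lemma does all the transfinite work --- at the cost of a slightly less self-contained argument; your closing remark about why the induction cannot be run directly with a non-modal disjunct is also accurate and is precisely the reason the paper's own inductive cases pass through $\square\psi$ rather than $\psi$.
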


\begin{proof}
We proceed by transfinite induction on the length of a proof of $\varphi$ from $\Gamma, \square \alpha$. Suppose first there is such a proof of length 1. If $\varphi$ is an axiom or belongs to $\Gamma$, clearly $\Gamma,\square\alpha\vee\square\beta \vdash \varphi\vee\psi$ since $\varphi \vdash \varphi\vee\psi$. If $\varphi = \square \alpha$, the conclusion follows from the previous lemma.

Suppose now that there is a proof of $\varphi$ from $\Gamma, \square \alpha$ of length $\xi$ and that the theorem holds (forall all $\beta$ and $\psi$) if this length were smaller than $\xi$. We can assume $\varphi$ follows from Modus Ponens, Necessitation or $\square$Inf. The first case follows as above and is left to the reader. In case $\varphi$ follows from Necessitation we have that $\varphi = \square \chi$ and $\Gamma, \square \alpha \vdash \chi$. Since $\Gamma, \square \beta \vdash \psi$, we have that $\Gamma, \square \beta \vdash \square \psi$. Using the induction hypothesis on $\Gamma, \square \alpha \vdash \varphi$ and $\Gamma, \square \beta \vdash \square \psi$, we get that $\Gamma, \square \alpha \vee \square \beta \vdash \chi \vee \square \psi$. By Necessitation, $\Gamma, \square \alpha \vee \square \beta \vdash \square(\chi \vee \square \psi)$ and, using the axiom $\square(\chi \vee \square \psi) \imp (\square \chi \vee \square \psi)$ and Modus Ponens, we get that $\Gamma, \square \alpha \vee \square \beta \vdash \square \chi \vee \square \psi$. Finally, from the fact that $\vdash (\gamma \vee \square\delta) \imp (\gamma \vee \delta)$, we get that $\Gamma, \square \alpha \vee \square \beta \vdash \square \chi \vee \psi$. In case $\varphi$ follows from $\square$Inf, we have that $\varphi = \square \chi \vee (\square \rho \imp \square \rho * \square \sigma)$ and that $\Gamma, \square \alpha \vdash \square \chi \vee (\square \rho \imp (\square \sigma)^n))$ in less than $\xi$ steps for every $n \in \mathbb{N}$. As in the second case, we have that $\Gamma, \square \beta \vdash \square \psi$, so the induction hypothesis implies that $\Gamma, \square \alpha \vee \square \beta \vdash (\square \chi \vee (\square \rho \imp (\square \sigma)^n)) \vee \square \psi$. Commuting the last disjunction and proceeding as in the previous paragraph, we get that $\Gamma, \square \alpha \vee \square \beta \vdash \square \psi \vee \square \chi \vee (\square \rho \imp \square \rho * \square \sigma)$. Using commutativity again and the fact that $\vdash (\gamma \vee \square \delta) \imp (\gamma \vee \delta)$, we get that $\Gamma, \square \alpha \vee \square \beta \vdash \square \chi \vee (\square \rho \imp \square \rho * \square \sigma) \vee \psi$, as was to be proved.
\end{proof}

\begin{corollary} \label{CORO: prelinealidad monadica}
Let $\Gamma \cup \{\alpha,\beta,\varphi\}$ be a set of formulas. Then,
\[
\Gamma, \square \alpha \imp \square \beta \vdash \varphi \text{ and } \Gamma, \square \beta \imp \square \alpha  \vdash \varphi \text{ imply } \Gamma \vdash \varphi.
\]
\end{corollary}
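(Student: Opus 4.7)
The plan is to reduce the statement to a direct application of Lemma \ref{LEMA: disyuncion en la hipotesis} by casting both hypotheses into the shape required by that lemma (namely, extra premises of the form $\square\gamma$) and then establishing the corresponding disjunction as a theorem of $\mathrm{S5}(\mathcal{L})_\infty$.

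First I would observe that $\square\alpha \imp \square\beta$ and $\square\beta \imp \square\alpha$ are each a propositional combination of formulas starting with $\square$, so by Remark \ref{OBS: props de la logica}.$(\ref{OBS: para todo de una formula constante})$ we have $\vdash (\square\alpha \imp \square\beta) \leftrightarrow \square(\square\alpha \imp \square\beta)$, and similarly for the other direction. Consequently, the two hypotheses of the corollary can be rewritten as
\[
\Gamma, \square(\square\alpha \imp \square\beta) \vdash \varphi \qquad \text{and} \qquad \Gamma, \square(\square\beta \imp \square\alpha) \vdash \varphi,
\]
which now have the form required by Lemma \ref{LEMA: disyuncion en la hipotesis}. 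Applying that lemma (with both conclusions equal to $\varphi$) yields
\[
\Gamma,\; \square(\square\alpha \imp \square\beta) \vee \square(\square\beta \imp \square\alpha) \vdash \varphi \vee \varphi,
\]
and since $\vdash (\varphi \vee \varphi) \imp \varphi$, we get $\Gamma,\; \square(\square\alpha \imp \square\beta) \vee \square(\square\beta \imp \square\alpha) \vdash \varphi$.

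Next I would show that the added premise is actually a theorem. In Basic Logic (and hence in $\mathrm{S5}(\mathcal{L})_\infty$) prelinearity gives $\vdash (\gamma \imp \delta) \vee (\delta \imp \gamma)$ for any formulas $\gamma,\delta$; instantiating with $\gamma := \square\alpha$ and $\delta := \square\beta$ yields $\vdash (\square\alpha \imp \square\beta) \vee (\square\beta \imp \square\alpha)$. Using the equivalence $\nu \leftrightarrow \square\nu$ from Remark \ref{OBS: props de la logica}.$(\ref{OBS: para todo de una formula constante})$ on each disjunct (both being propositional combinations of $\square$-formulas), we conclude $\vdash \square(\square\alpha \imp \square\beta) \vee \square(\square\beta \imp \square\alpha)$. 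Combining this with the derivation above via a cut (Modus Ponens after weakening) gives $\Gamma \vdash \varphi$, as desired.

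The main subtlety is the need to invoke Lemma \ref{LEMA: disyuncion en la hipotesis} with hypotheses of the exact form $\square\gamma$; without the preliminary rewriting via Remark \ref{OBS: props de la logica}.$(\ref{OBS: para todo de una formula constante})$, prelinearity alone would not be enough, since the lemma is proved only for hypotheses of boxed shape. Once this syntactic alignment is made, the rest of the argument is just packaging prelinearity and reasoning by cases.
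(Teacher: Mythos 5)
Your proof is correct and follows essentially the same route as the paper: rewrite the implications as boxed formulas via Remark \ref{OBS: props de la logica}.$(\ref{OBS: para todo de una formula constante})$, apply Lemma \ref{LEMA: disyuncion en la hipotesis}, and discharge the resulting disjunctive premise using prelinearity and $(\varphi\vee\varphi)\imp\varphi$ from Basic Logic. The paper's proof is just a terser statement of the same argument.
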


\begin{proof}
Apply the lemma noting that $\vdash \square(\square \alpha \imp \square \beta) \leftrightarrow (\square \alpha \imp \square \beta)$ to obtain that $\Gamma, (\square \alpha \imp \square \beta) \vee (\square \beta \imp \square \alpha) \vdash \varphi \vee \varphi$. Then use Basic Logic to get the final result.
\end{proof}

\begin{corollary} \label{CORO: disyuncion en la conclusion}
Let $\Gamma \cup \{\alpha,\beta,\varphi\}$ be a set of formulas. Then,
\[
\Gamma \vdash \varphi \vee \square \psi \text{ and } \Gamma, \square \psi \vdash \varphi \text{ imply } \Gamma \vdash \varphi.
\]
\end{corollary}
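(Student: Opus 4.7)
The plan is to reduce the hypothesis $\Gamma, \square\psi \vdash \varphi$ to a disjunctive hypothesis of the form $\Gamma, \square\varphi \vee \square\psi \vdash \varphi$, and then to combine it with a strengthened version of $\Gamma \vdash \varphi \vee \square\psi$ in which $\varphi$ appears preceded by $\square$.

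First I would strengthen the hypothesis $\Gamma \vdash \varphi \vee \square\psi$ to $\Gamma \vdash \square\varphi \vee \square\psi$. To this end, Necessitation applied to the proof of $\varphi \vee \square\psi$ from $\Gamma$ yields $\Gamma \vdash \square(\varphi \vee \square\psi)$; then the axiom $\square(\chi \vee \nu) \imp (\square\chi \vee \nu)$, instantiated with $\chi := \varphi$ and $\nu := \square\psi$ (note that $\square\psi$ qualifies as a propositional combination of formulas beginning with $\square$), combined with Modus Ponens gives $\Gamma \vdash \square\varphi \vee \square\psi$.

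Second, from the axiom $\square\varphi \imp \varphi$ and Modus Ponens we obtain trivially $\Gamma, \square\varphi \vdash \varphi$. Applying Lemma \ref{LEMA: disyuncion en la hipotesis} to this and to the given $\Gamma, \square\psi \vdash \varphi$ (taking $\alpha := \varphi$, $\beta := \psi$, and both conclusions equal to $\varphi$) yields $\Gamma, \square\varphi \vee \square\psi \vdash \varphi \vee \varphi$, which simplifies to $\Gamma, \square\varphi \vee \square\psi \vdash \varphi$.

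Finally, cutting the previously derived $\Gamma \vdash \square\varphi \vee \square\psi$ against this last derivation (the proof from $\Gamma \cup \{\square\varphi \vee \square\psi\}$ can have every use of $\square\varphi \vee \square\psi$ as a hypothesis replaced by its proof from $\Gamma$) yields $\Gamma \vdash \varphi$. I do not expect any serious obstacles: the whole argument is a short bookkeeping exercise once the right disjunctive form is identified. The most delicate point is merely to check that Necessitation is admissible over hypotheses in this infinitary consequence relation, which is built into the definition of $\vdash_{\mathrm{S5}(\mathcal{L})_\infty}$ used in the preceding lemmas.
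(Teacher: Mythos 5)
Your proof is correct and follows essentially the same route as the paper's: derive $\Gamma,\square\varphi\vee\square\psi\vdash\varphi$ from Lemma \ref{LEMA: disyuncion en la hipotesis} applied to $\Gamma,\square\varphi\vdash\varphi$ and $\Gamma,\square\psi\vdash\varphi$, then upgrade $\Gamma\vdash\varphi\vee\square\psi$ to $\Gamma\vdash\square\varphi\vee\square\psi$ via Necessitation and the axiom $\square(\varphi\vee\nu)\imp(\square\varphi\vee\nu)$, and cut. The only cosmetic difference is the order of the two steps.
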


\begin{proof}
Since $\Gamma, \square \varphi \vdash \varphi$ and $\Gamma, \square \psi \vdash \varphi$, by Lemma \ref{LEMA: disyuncion en la hipotesis} and the fact that $\vdash (\varphi \vee \varphi) \imp \varphi$, we have that $\Gamma, \square \varphi \vee \square \psi \vdash \varphi$. By hypothesis $\Gamma \vdash \varphi \vee \square \psi$, so, using Necessitation, Remark \ref{OBS: props de la logica}.$(\ref{OBS: para todo de una o pasa a la primera})$ and Modus Ponens, we get that $\Gamma \vdash \square \varphi \vee \square \psi$. Thus, $\Gamma \vdash \varphi$.
\end{proof}

A set of formulas $\Gamma$ is said to be a {\em theory of ${\mathrm{S5}(\mathcal{L})_\infty}$} provided that for every formula $\varphi$ such that $\Gamma \vdash \varphi$ we have that $\varphi \in \Gamma$. We say that a theory $\Gamma$ is {\em $\square$-prelinear} if for every formulas $\alpha, \beta$ we have that $\square \alpha \imp \square \beta \in \Gamma$ or $\square \beta \imp \square \alpha \in \Gamma$. The following theorem is an adaptation of \cite[Theorem 14]{Kulacka18} to the monadic case.

\begin{theorem} \label{TEO: prelinear extension}
Let $\Gamma \cup \{\varphi\}$ be a set of formulas such that $\Gamma \nvdash \varphi$. Then, there is a $\square$-prelinear theory $\Gamma^*$ containing $\Gamma$ such that $\Gamma^* \nvdash \varphi$. 
\end{theorem}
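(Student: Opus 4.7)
The plan is to carry out a Lindenbaum-style construction, using Corollary \ref{CORO: prelinealidad monadica} as the decisive tool for settling, at each step, which direction of a prelinearity implication can safely be added to the theory being built.

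First, since the language is countable, I would fix an enumeration $(\alpha_n, \beta_n)_{n \in \omega}$ of all ordered pairs of formulas, and recursively construct an ascending chain of deductively closed sets $\Gamma = \Gamma_0 \subseteq \Gamma_1 \subseteq \cdots$, each satisfying $\varphi \notin \Gamma_n$. Take $\Gamma_0$ to be the deductive closure of $\Gamma$; by hypothesis $\varphi \notin \Gamma_0$. Given $\Gamma_n$, Corollary \ref{CORO: prelinealidad monadica} rules out the possibility that $\Gamma_n, \square \alpha_n \imp \square \beta_n \vdash \varphi$ and $\Gamma_n, \square \beta_n \imp \square \alpha_n \vdash \varphi$ both hold (that would force $\Gamma_n \vdash \varphi$), so at least one of the two implications can be adjoined safely; let $\Gamma_{n+1}$ be the deductive closure of the safe extension. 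Set $\Gamma^* := \bigcup_{n \in \omega} \Gamma_n$. Then $\Gamma^*$ contains $\Gamma$, omits $\varphi$, and contains for every pair $(\alpha, \beta)$ at least one of $\square \alpha \imp \square \beta$ or $\square \beta \imp \square \alpha$, so the theorem will follow once $\Gamma^*$ is shown to be a theory.

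The main obstacle is verifying closure of $\Gamma^*$ under the infinitary rule $\square$Inf. Closure under Modus Ponens and Necessitation is routine: any finite set of premises fits inside a single $\Gamma_N$ (the maximum of the stages at which its elements first appear), and since $\Gamma_N$ is deductively closed the conclusion already lies in $\Gamma_N \subseteq \Gamma^*$. For $\square$Inf, however, the $\omega$ premises $\{\square \chi \vee (\square \rho \imp (\square \sigma)^k) : k \in \mathbb{N}\}$ drawn from $\Gamma^*$ may have stages $n_k$ unbounded in $\omega$, so a priori no single $\Gamma_N$ contains them all. I would overcome this following the technique of \cite[Theorem 14]{Kulacka18}: interleave the original enumeration with an enumeration of all triples $(\chi, \rho, \sigma)$ governing potential $\square$Inf instances, and at each such additional stage, whenever the $\omega$ premises of the associated instance all lie in the current $\Gamma_n$, adjoin its conclusion; Corollary \ref{CORO: disyuncion en la conclusion} applied to the boxed-disjunction form of the premises is used to ensure that this adjunction preserves $\varphi \notin \Gamma_n$. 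With this bookkeeping in place, $\Gamma^*$ is closed under $\square$Inf by construction, so a transfinite induction on proof length confirms that $\Gamma^*$ is a theory, and the prelinearity and non-derivability of $\varphi$ follow at once.
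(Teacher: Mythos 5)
Your construction handles prelinearity correctly via Corollary \ref{CORO: prelinealidad monadica}, and you rightly identify closure under $\square$Inf as the crux, but the fix you propose does not close the gap. If at the stage devoted to a triple $(\chi,\rho,\sigma)$ all $\omega$ premises $\square\chi\vee(\square\rho\imp(\square\sigma)^k)$ already lie in the current $\Gamma_n$, then, since $\Gamma_n$ is deductively closed under the infinitary consequence relation, the conclusion is already in $\Gamma_n$ and your extra stage does nothing. The genuinely problematic case is the one where the premises enter at stages $n_k$ unbounded in $\omega$: then they are never simultaneously present in any single $\Gamma_n$, your dedicated stage (whether visited once or cofinally often) never fires, and $\Gamma^*$ may end up containing all the premises but not the conclusion. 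Nothing in your construction prevents this.

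What is missing is a mechanism for \emph{blocking} a premise once the corresponding conclusion has been rejected. The paper achieves this by carrying along a growing target formula $\varphi_n$ in place of the fixed $\varphi$: when a candidate $\alpha_n=\square\psi\vee(\square\alpha\imp\square\alpha*\square\beta)$ cannot be consistently added, one shows (using $\square$Inf together with Remark \ref{OBS: props de la logica} and Corollary \ref{CORO: disyuncion en la conclusion}) that there is a least $k$ with $\Gamma_n\nvdash\varphi_n\vee\square\psi\vee(\square\alpha\imp(\square\beta)^k)$, and sets $\varphi_{n+1}$ equal to that disjunction. Since every later $\Gamma_m$ still fails to prove $\varphi_{n+1}$, the premise $\square\psi\vee(\square\alpha\imp(\square\beta)^k)$ can never enter $\Gamma^*$; hence any $\square$Inf instance all of whose premises lie in $\Gamma^*$ must have had its conclusion accepted when its turn came. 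Without this (or an equivalent blocking device) your $\Gamma^*$ need not be a theory of $\mathrm{S5}(\mathcal{L})_\infty$, so the argument as proposed is incomplete.
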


\begin{proof}
We define a sequence of sets $\Gamma_0, \Gamma_1, \ldots$ and a sequence of formulas $\varphi_0, \varphi_1, \ldots$ such that $\Gamma_n \nvdash \varphi_n$ for every $n \geq 0$. Let $\Gamma_0 := \Gamma$ and $\varphi_0 := \varphi$. Let $\alpha_0, \alpha_1, \ldots$ be an enumeration of all the formulas. Assume $\Gamma_n$ and $\varphi_n$ have already been defined so that $\Gamma_n \nvdash \varphi_n$. We define $\Gamma_{n+1}$ and $\varphi_{n+1}$ according to the following rules:
\begin{itemize}
\item If $\Gamma_n, \alpha_n \nvdash \varphi_n$, then $\Gamma_{n+1} := \Gamma_n \cup \{\alpha_n\}$ and $\varphi_{n+1} := \varphi_n$. Thus, $\Gamma_{n+1} \nvdash \varphi_{n+1}$.
\item If $\Gamma_n, \alpha_n \vdash \varphi_n$, then $\Gamma_{n+1} := \Gamma_n$ and
\begin{itemize}
\item if $\alpha_n$ is not of the form $\square \psi \vee (\square \alpha \imp \square \alpha * \square \beta)$, then $\varphi_{n+1} := \varphi_n$. Thus, $\Gamma_{n+1} \nvdash \varphi_{n+1}$.
\item if $\alpha_n$ is of the form $\square \psi \vee (\square \alpha \imp \square \alpha * \square \beta)$, then $\varphi_{n+1} := \varphi_n \vee \square \psi \vee (\square \alpha \imp (\square \beta)^k)$ where $k$ is the smallest natural number such that $\Gamma_n \nvdash \varphi_n \vee \square \psi \vee (\square \alpha \imp (\square \beta)^k)$. Such $k$ exists because, if $\Gamma_n \vdash \varphi_n \vee \square \psi \vee (\square \alpha \imp (\square \beta)^m)$ for every $m \in \mathbb{N}$, then, using Necessitation, Remark \ref{OBS: props de la logica}.$(\ref{OBS: para todo de una o pasa a la primera})$ and Modus Ponens, we get that $\Gamma_n \vdash \square(\varphi_n \vee \square \psi) \vee (\square \alpha \imp (\square \beta)^m)$ for every $m \in \mathbb{N}$; thus, by $\square$Inf, we have that $\Gamma_n \vdash \square(\varphi_n \vee \square \psi) \vee (\square \alpha \imp \square \alpha * \square \beta)$. Since $\vdash (\square \gamma \vee \delta) \imp (\gamma \vee \delta)$, we get that $\Gamma_n \vdash \varphi_n \vee \square \psi \vee (\square \alpha \imp \square \alpha * \square \beta)$, that is, $\Gamma_n \vdash \varphi_n \vee \alpha_n$. Note that $\alpha_n$ is a BL-propositional combination of formulas that start with $\square$; hence, by Remark \ref{OBS: props de la logica}.$(\ref{OBS: para todo de una formula constante})$, we have that $\vdash \alpha_n \leftrightarrow \square \alpha_n$. Using Basic Logic properties we can then derive that $\Gamma_n \vdash \varphi_n \vee \square \alpha_n$. Moreover, since $\Gamma_n, \alpha_n \vdash \varphi_n$, we also have that $\Gamma_n, \square \alpha_n \vdash \varphi_n$. Using Corollary \ref{CORO: disyuncion en la conclusion}, we obtain $\Gamma_n \vdash \varphi_n$, a contradiction. Thus, $\Gamma_{n+1} \nvdash \varphi_{n+1}$.
\end{itemize}
\end{itemize}
Observe that, by construction, for $n \leq m$ we have that $\Gamma_n \subseteq \Gamma_m$ and $\vdash \varphi_n \imp \varphi_m$. Clearly, $\Gamma_n \nvdash \varphi_n$ for every $n \geq 0$. Moreover, for every $n,m \geq 0$ we have that $\Gamma_n \nvdash \varphi_m$. Indeed, suppose $\Gamma_n \vdash \varphi_m$. If $n \leq m$, then $\Gamma_n \subseteq \Gamma_m$ and $\Gamma_m \vdash \varphi_m$, a contradiction. If $m \leq n$, then $\vdash \varphi_m \imp \varphi_n$ and $\Gamma_n \vdash \varphi_n$, again a contradiction.

Put $\Gamma^{*} := \bigcup \Gamma_n$. Let us show that $\Gamma^*$ is a theory of $\mathrm{S5}(\mathcal{L})_\infty$. It is enough to show that $\Gamma^*$ contains all the instances of axioms and is closed under the inference rules. Let $\alpha$ be an instance of an axiom of $\mathrm{S5}(\mathcal{L})_\infty$. Then, there is $n \geq 0$ such that $\alpha_n = \alpha$. Being $\alpha_n$ an instance of an axiom, if $\Gamma_n,\alpha_n \vdash \varphi_n$, then $\Gamma_n \vdash \varphi_n$, a contradiction. Thus, $\Gamma_n, \alpha_n \nvdash \varphi_n$ whence, by construction, $\alpha_n \in \Gamma_{n+1} \subseteq \Gamma^*$. Assume now that $\alpha, \alpha \imp \beta \in \Gamma^*$. Let $n$ be such that $\beta = \alpha_n$. If $\Gamma_n, \alpha_n \nvdash \varphi_n$, then $\beta = \alpha_n \in \Gamma_{n+1} \subseteq \Gamma^*$ and we are done. Otherwise, $\Gamma_n, \beta \vdash \varphi_n$. Let $m$ be sufficiently large such that $n \leq m$ and $\alpha, \alpha \imp \beta \in \Gamma_m$. Then $\Gamma_m \vdash \beta$ and $\Gamma_m, \beta \vdash \varphi_n$, so $\Gamma_m \vdash \varphi_n$, a contradiction. This shows that $\Gamma^*$ is closed under Modus Ponens. We now show that $\Gamma^*$ is closed under Necessitation. Suppose that $\alpha \in \Gamma^*$ and let $n$ be such that $\alpha_n = \square \alpha$. If $\Gamma_n, \alpha_n \nvdash \varphi_n$, then $\square \alpha = \alpha_n \in \Gamma_{n+1} \subseteq \Gamma^*$ and we are done. Otherwise, $\Gamma_n, \square \alpha \vdash \varphi_n$. Let $m$ be sufficiently large such that $m \geq n$ and $\alpha \in \Gamma_m$. Then, $\Gamma_m \vdash \alpha$ and $\Gamma_m, \square \alpha \vdash \varphi_n$. From $\Gamma_m \vdash \alpha$ and Necessitation, we get that $\Gamma_m \vdash \square \alpha$, so $\Gamma_m \vdash \varphi_n$, a contradiction. Finally, let us prove that $\Gamma^*$ is closed under $\square$Inf. Suppose $\square \psi \vee (\square \alpha \imp (\square \beta)^m)) \in \Gamma^*$ for every $m \in \mathbb{N}$. Let $n$ be such that $\alpha_n = \square \psi \vee (\square \alpha \imp \square \alpha * \square \beta)$. If $\Gamma_n, \alpha_n \nvdash \varphi_n$, then $\alpha_n \in \Gamma_{n+1} \subseteq \Gamma^*$ and we are done. Otherwise, $\Gamma_n, \alpha_n \vdash \varphi_n$. By construction, there is $k \in \mathbb{N}$ such that $\varphi_{n+1} = \varphi_n \vee \square \psi \vee (\square \alpha \imp (\square \beta)^k)$. Since $\square \psi \vee (\square \alpha \imp (\square \beta)^k) \in \Gamma^*$, there is a sufficiently large $m$ such that $m \geq n$ and $\square \psi \vee (\square \alpha \imp (\square \beta)^k) \in \Gamma_m$. Thus, since $\vdash \delta \imp (\gamma \vee \delta)$, we have that $\Gamma_m \vdash \varphi_{n+1}$, a contradiction.

It remains to show that $\Gamma^*$ is $\square$-prelinear. Let $\alpha, \beta$ be formulas and let $n,m$ be such that $\alpha_n = \square \alpha \imp \square \beta$ and $\alpha_m = \square \beta \imp \square \alpha$. If $\Gamma_n, \alpha_n \nvdash \varphi_n$, then $\alpha_n \in \Gamma_{n+1} \subseteq \Gamma^*$ and we are done. Analogously, in case $\Gamma_m, \alpha_m \nvdash \varphi_m$ we have that $\alpha_m \in \Gamma^*$. Thus, we can assume that $\Gamma_n, \alpha_n \vdash \varphi_n$ and $\Gamma_m, \alpha_m \vdash \varphi_m$. Letting $k := \max\{m,n\}$, we have that $\Gamma_k, \alpha_n \vdash \varphi_k$ and $\Gamma_k, \alpha_m \vdash \varphi_k$. By Corollary \ref{CORO: prelinealidad monadica}, $\Gamma_k \vdash \varphi_k$, a contradiction.
\end{proof}

Observe that $\mathrm{S5}(\mathcal{L})_\infty$ is an implicative logic, since it is an extension of $\mathrm{S5}(\mathcal{L})$, which is implicative (see \cite{CCDVR17}). Given a set $\Gamma \subseteq Fm$, let $\equiv_\Gamma$ be the relation on $Fm$ given by $\varphi \equiv_\Gamma \psi$ iff $\Gamma \vdash \varphi \imp \psi$ and $\Gamma \vdash \psi \imp \varphi$. Then, $\equiv_\Gamma$ is an equivalence relation on $Fm$. We write $[\varphi]_\Gamma$ for the equivalence class of $\varphi$ modulo $\equiv_\Gamma$. We can define an algebra $\mathbf{L}_\Gamma := \langle Fm/\mathord{\equiv}_\gamma, \wedge, \vee, *, \imp, 0, 1, \exists, \forall\rangle$ where
\begin{itemize}
\item $0 := [\bar{0}]_\Gamma$, $1 := [\bar{1}]_\Gamma$,
\item $[\varphi]_\Gamma \star [\psi]_\Gamma := [\varphi \star \psi]_\Gamma$ for $\star \in \{\wedge, \vee, *, \imp\}$,
\item $\forall [\varphi]_\Gamma := [\square \varphi]_\Gamma$, $\exists [\varphi]_\Gamma := [\lozenge \varphi]_\Gamma$.
\end{itemize}
Then, $\mathbf{L}_\Gamma$ is a monadic MV-algebra.

\begin{lemma} \label{LEMA: MV-chain + arquim = simple}
Let $\mathbf{A}$ be an MV-chain with the following property: for $a,b \in A$, if $a \leq b^n$ for every $n \geq 1$, then $a = a*b$. Then, $\mathbf{A}$ is simple.
\end{lemma}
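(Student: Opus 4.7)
The plan is to show that the hypothesis forces the radical of $\mathbf{A}$ to be trivial, and then to invoke the standard fact that for an MV-chain triviality of the radical is equivalent to simplicity. The second equivalence follows from the observation that the ideals of an MV-chain are linearly ordered by inclusion: if $I_1 \not\subseteq I_2$, pick $a \in I_1 \setminus I_2$; then every $b \in I_2$ must satisfy $b \leq a$ (otherwise $a$ would lie in $I_2$ by downward closure), whence $b \in I_1$ and so $I_2 \subseteq I_1$. Consequently the union of all proper ideals of $\mathbf{A}$ is itself a proper ideal, hence the unique maximal ideal, and this coincides with $\mathrm{Rad}(\mathbf{A})$. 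In particular, $\mathbf{A}$ is simple iff $\mathrm{Rad}(\mathbf{A}) = \{0\}$.

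To carry out the first step, I would take any $a \in \mathrm{Rad}(\mathbf{A})$ and use the usual characterization of the radical in terms of infinitesimals: $na \leq \neg a$ for every $n \geq 1$. By the De Morgan identity $na = \neg((\neg a)^n)$ and the involutivity of $\neg$, this is equivalent to $a \leq (\neg a)^n$ for every $n \geq 1$. The hypothesis of the lemma, applied with $b := \neg a$, then yields $a = a * \neg a$. But in every MV-algebra one has $a * \neg a = \neg(\neg a \oplus a) = \neg 1 = 0$, so $a = 0$. Hence $\mathrm{Rad}(\mathbf{A}) = \{0\}$, and combining with the previous paragraph, $\mathbf{A}$ is simple.

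There is no real obstacle in this argument; the whole proof reduces to the observation that, under the hypothesis, the substitution $b := \neg a$ converts the infinitesimal inequality into an immediate instance of the implication to be applied, after which the identity $a * \neg a = 0$ closes the computation. The only background to marshal is the standard MV-algebra folklore identifying simplicity of a totally ordered MV-algebra with the triviality of its radical and the equivalence between the infinitesimal condition $na \leq \neg a$ and its multiplicative form $a \leq (\neg a)^n$; both are classical and may be invoked from \cite{CDM00}.
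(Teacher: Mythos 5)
Your argument is correct, but it takes a genuinely different route from the paper's. The paper fixes an arbitrary $b \neq 1$ and shows that the only lower bound of $\{b^n : n \in \mathbb{N}\}$ is $0$: from $a = a * b$ it computes $\neg a = \neg a \oplus \neg b = b \imp \neg a$, hence $b \vee \neg a = (b \imp \neg a) \imp \neg a = 1$, and total order together with $b \neq 1$ forces $\neg a = 1$; consequently $\neg b \neq 0$ fails to be such a lower bound, so $b^n \leq \neg b$ for some $n$ and $b^{n+1} \leq b * \neg b = 0$, i.e.\ every non-unit is nilpotent and the only proper filter is $\{1\}$. You instead pass to the radical: after justifying that an MV-chain is simple iff its radical is trivial, you take $a$ with $na \leq \neg a$ for all $n$, rewrite this as $a \leq (\neg a)^n$ for all $n$, and apply the hypothesis exactly once with $b := \neg a$, whereupon $a = a * \neg a = 0$ identically. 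Your substitution makes the computational core essentially trivial---no order-theoretic case analysis is needed because $x * \neg x = 0$ holds in every MV-algebra---at the price of invoking two pieces of standard background (the ideals of a chain are linearly ordered, so the radical is the unique maximal ideal; the radical of a chain is the set of infinitesimals together with $0$), both of which you state correctly and which are available in \cite{CDM00}, and which for chains are easy to verify directly. The paper's proof is longer but self-contained modulo elementary MV-arithmetic. Either argument establishes the lemma.
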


\begin{proof}
Fix $b \in A$ with $b \ne 1$. If $a$ is a lower bound of the set $\{b^n: n \in \mathbb{N}\}$, then, by hypothesis, $a = a*b$. Thus $\neg a = \neg a \oplus \neg b = b \imp \neg a$. Hence $b \vee \neg a = (b \imp \neg a) \imp \neg a = \neg a \imp \neg a = 1$. Since $\mathbf{A}$ is a chain and $b \ne 1$, we get that $\neg a = 1$, that is, $a = 0$. This proves that $\inf\{b^n : n \in \mathbb{N}\} = 0$. Now, since $b \ne 1$, we have that $\neg b \ne 0$, thus there must exist $n \in \mathbb{N}$ such that $b^n \leq \neg b$, so $b^{n+1} \leq b * \neg b = 0$.
\end{proof}

\begin{lemma} \label{LEMA: el alg de Lindenbaum es monadica simple}
If $\Gamma$ is $\square$-prelinear, then $\mathbf{L}_\Gamma$ is a simple monadic MV-algebra.
\end{lemma}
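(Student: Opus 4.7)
The plan is to reduce simplicity of $\mathbf{L}_\Gamma$ to simplicity of the subalgebra $\exists \mathbf{L}_\Gamma$ via Lemma \ref{LEMA: props basicas}(7), and then verify the hypothesis of Lemma \ref{LEMA: MV-chain + arquim = simple} for $\exists \mathbf{L}_\Gamma$. So it suffices to prove two things: (i) $\exists \mathbf{L}_\Gamma$ is an MV-chain, and (ii) for every $u,v \in \exists L_\Gamma$, if $u \leq v^n$ for all $n \geq 1$, then $u = u * v$.

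For (i), recall that, by Lemma \ref{LEMA: props basicas}(1), $\exists L_\Gamma = \forall L_\Gamma$ coincides with the image of $\forall$, so every element of $\exists L_\Gamma$ has the form $[\square \varphi]_\Gamma$. Since $\Gamma$ is a theory, for any two formulas $\alpha, \beta$, the disjunct $\square\alpha \imp \square\beta \in \Gamma$ translates to $[\square\alpha]_\Gamma \leq [\square\beta]_\Gamma$, and similarly for the other direction. Thus $\square$-prelinearity of $\Gamma$ yields that any two elements of $\exists L_\Gamma$ are comparable.

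For (ii), suppose $[\square\alpha]_\Gamma \leq [\square\beta]_\Gamma^n$ for every $n \geq 1$, equivalently $\Gamma \vdash \square\alpha \imp (\square\beta)^n$ for every $n$. Here is where the infinitary rule $\square$Inf enters: taking the instance with $\varphi := \bar 0$ and weakening each premise by the disjunct $\square \bar 0$, we have $\Gamma \vdash \square \bar 0 \vee (\square\alpha \imp (\square\beta)^n)$ for every $n$, so by $\square$Inf we obtain $\Gamma \vdash \square \bar 0 \vee (\square\alpha \imp \square\alpha * \square\beta)$. Since $\vdash \square \bar 0 \leftrightarrow \bar 0$ (one direction is the axiom $\square\varphi \imp \varphi$, the other is trivial) and $\vdash \bar 0 \vee \psi \leftrightarrow \psi$ in BL, we conclude $\Gamma \vdash \square\alpha \imp \square\alpha * \square\beta$, i.e.\ $[\square\alpha]_\Gamma \leq [\square\alpha]_\Gamma * [\square\beta]_\Gamma$. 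The reverse inequality is a general MV-algebra fact. This establishes the required property.

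Applying Lemma \ref{LEMA: MV-chain + arquim = simple} to $\exists \mathbf{L}_\Gamma$ gives that $\exists \mathbf{L}_\Gamma$ is simple, and then Lemma \ref{LEMA: props basicas}(7) yields that $\mathbf{L}_\Gamma$ itself is simple. The only subtle step in the argument is the use of $\square$Inf with the dummy formula $\bar 0$: this is exactly the point where the infinitary content of $\mathrm{S5}(\mathcal{L})_\infty$ (and not merely the finitary $\mathrm{S5}(\mathcal{L})$) is required, and it is what allows us to promote the archimedean-like information about the chain $\exists \mathbf{L}_\Gamma$ into the algebraic equation needed for simplicity.
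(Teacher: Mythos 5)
Your proposal is correct and follows essentially the same route as the paper: $\square$-prelinearity makes $\exists\mathbf{L}_\Gamma$ a chain, the instance of $\square$Inf with $\varphi=\bar 0$ yields the archimedean-type condition, Lemma \ref{LEMA: MV-chain + arquim = simple} gives simplicity of $\exists\mathbf{L}_\Gamma$, and Lemma \ref{LEMA: props basicas}(7) lifts this to $\mathbf{L}_\Gamma$. You merely spell out a couple of steps the paper leaves implicit (that every element of $\exists L_\Gamma$ has the form $[\square\varphi]_\Gamma$, and the padding/removal of the dummy disjunct $\square\bar 0$), which is fine.
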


\begin{proof}
Clearly, $\square$-prelinarity implies that $\exists \mathbf{L}_\Gamma$ is totally ordered. However, the rule $\square$Inf implies a stronger condition on $\mathbf{L}_\Gamma$. Note that taking $\varphi = \bar{0}$ in $\square$Inf we have that $\Gamma \vdash \square \alpha \imp \square \alpha * \square \beta$ whenever $\Gamma \vdash \square \alpha \imp (\square \beta)^n $ for every $n \in \mathbb{N}$. This translates to the following property of $\mathbf{L}_\Gamma$: for $a,b \in L_\Gamma$, if $\forall a \leq (\forall b)^n$ for every $n \in \mathbb{N}$, then $\forall a = \forall a * \forall b$. Lemma \ref{LEMA: MV-chain + arquim = simple} implies that $\exists \mathbf{L}_\Gamma$ is a simple MV-algebra and, thus, $\mathbf{L}_\Gamma$ is a simple monadic MV-algebra.
\end{proof}

Simple monadic MV-algebras have a nice functional representation that we give in the following theorem.

\begin{theorem} \label{TEO: embedding de monadicas simples en funcionales estandar}
Let $\langle \mathbf{A}, \exists, \forall\rangle$ be a simple monadic MV-algebra. Then, there is a set $I$ and an embedding of $\langle \mathbf{A}, \exists, \forall\rangle$ into $\langle [0,1]_\textrm{\rm \L}^I, \exists_\vee, \forall_\wedge\rangle$. Moreover, the set $I$ can be taken to be the set of maximal filters of $\mathbf{A}$.
\end{theorem}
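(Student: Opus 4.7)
The plan is to use the functional representation from Theorem~\ref{TEO: las fsi son funcionales con para todo que se alcanza} to pin down the values of a suitable family of MV-homomorphisms from $\mathbf{A}$ to $[0,1]_\textrm{\rm \L}$, indexed directly by maximal MV-filters. Since $\langle\mathbf{A},\exists,\forall\rangle$ is simple, Lemma~\ref{LEMA: props basicas}$(7)$ gives that $\exists\mathbf{A}$ is a simple MV-chain; let $\iota\colon \exists\mathbf{A}\hookrightarrow [0,1]_\textrm{\rm \L}$ be the (classically unique) embedding. Take $I:=\mathrm{Max}(\mathbf{A})$. For each $M\in I$, $\mathbf{A}/M$ is a simple MV-algebra, and $M\cap \exists A=\{1\}$ because $M\cap \exists A$ is a proper filter of the simple algebra $\exists\mathbf{A}$. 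Define $\phi_M\colon \mathbf{A}\to [0,1]_\textrm{\rm \L}$ by composing the quotient map $\mathbf{A}\to \mathbf{A}/M$ with any embedding of $\mathbf{A}/M$ into $[0,1]_\textrm{\rm \L}$; by uniqueness of the embedding of a simple MV-chain, $\phi_M|_{\exists A}=\iota$.

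Next, I would define $\Phi\colon \mathbf{A}\to \langle [0,1]_\textrm{\rm \L}^I,\exists_\vee,\forall_\wedge\rangle$ by $\Phi(a)(M):=\phi_M(a)$. Clearly $\Phi$ is an MV-homomorphism, and both $\Phi(\forall a)$ and $\Phi(\exists a)$ are constant functions, equal to $\iota(\forall a)$ and $\iota(\exists a)$ respectively. To verify $\Phi(\forall a)=\forall_\wedge \Phi(a)$, I would apply Theorem~\ref{TEO: las fsi son funcionales con para todo que se alcanza} to write $\mathbf{A}\leq \mathbf{C}^X$ with witnesses, obtaining $x_a\in X$ with $a(x_a)=\forall a$. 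Evaluation at $x_a$ followed by the canonical map $\mathbf{C}\to \mathbf{C}/\mathrm{Rad}(\mathbf{C})\hookrightarrow [0,1]_\textrm{\rm \L}$ is a homomorphism with simple image, hence its kernel is a maximal filter $M_{x_a}\in I$, and $\phi_{M_{x_a}}(a)=\iota(a(x_a))=\iota(\forall a)$. Combined with the trivial $\phi_M(a)\geq \phi_M(\forall a)=\iota(\forall a)$ valid for every $M\in I$, this gives $\bigwedge_{M\in I}\phi_M(a)=\iota(\forall a)$; the dual argument, using a witness $y_a$ for $\exists a$, handles $\exists$. Injectivity reduces to the same point: if $\phi_M(a)=1$ for every $M$, then in particular $\phi_{M_{x_a}}(a)=\iota(\forall a)=1$, forcing $\forall a=1$ and hence $a=1$.

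I expect the main obstacle to be precisely the verification that $\bigwedge_{M\in I}\phi_M(a)=\iota(\forall a)$ (and its $\exists$-dual); the trivial inequality only gives $\geq$, and without further information it is not obvious that the infimum is actually attained. The witnesses provided by Theorem~\ref{TEO: las fsi son funcionales con para todo que se alcanza} are precisely what make the argument work, since they exhibit an explicit maximal filter at which the infimum is realised. A subsidiary, but standard, point is the uniqueness of the embedding of a simple MV-chain into $[0,1]_\textrm{\rm \L}$, which is used to guarantee that the restriction of every $\phi_M$ to $\exists A$ coincides with the fixed embedding $\iota$, so that $\Phi(\forall a)$ and $\Phi(\exists a)$ are genuinely constant functions with the required values.
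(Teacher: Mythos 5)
Your proposal is correct, and its skeleton matches the paper's: index by $I=\mathrm{Max}(\mathbf{A})$, use $M\cap\exists A=\{1\}$ together with the uniqueness of the embedding of a simple MV-chain into $[0,1]_\textrm{\rm \L}$ to make $\Phi(\forall a)$ and $\Phi(\exists a)$ constant, and then show the infimum/supremum is actually \emph{attained} at some maximal filter. Where you diverge is in how that witnessing filter is produced. The paper stays purely element-theoretic: it first shows the radical is trivial (via $2a^n=1\Rightarrow 2(\forall a)^n=1$ and simplicity of $\exists\mathbf{A}$) to get injectivity, and then uses the identity $\exists(\exists a\imp a)=1$ to conclude $(\exists a\imp a)^n\neq 0$ for all $n$, so that $\exists a\imp a$ generates a proper filter contained in some maximal $M^*$, at which $a/M^*=(\exists a)/M^*$. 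You instead import Theorem~\ref{TEO: las fsi son funcionales con para todo que se alcanza}: the witness point $x_a$, pushed through $\mathbf{C}\to\mathbf{C}/\mathrm{Rad}(\mathbf{C})\hookrightarrow[0,1]_\textrm{\rm \L}$, yields a maximal filter $M_{x_a}$ with $\phi_{M_{x_a}}(a)=\iota(\forall a)$, and injectivity falls out as a byproduct. Both routes are sound; yours buys a shorter verification at the cost of invoking the heavier functional-representation machinery (you should note explicitly that the concrete composite through $x_a$ and the abstract map $\phi_{M_{x_a}}$ coincide because both factor through $\mathbf{A}/M_{x_a}$ as embeddings into $[0,1]_\textrm{\rm \L}$, hence are equal by the same uniqueness fact), whereas the paper's argument is self-contained modulo Lemma~\ref{LEMA: prop de los cuantif en MMV} and elementary filter theory. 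The only blemish is notational: $\iota(a(x_a))$ is an abuse, since $\iota$ is defined on $\exists A$ while $a(x_a)\in C$; what is meant is the common value $\phi_{M_{x_a}}(\forall a)=\iota(\forall a)$.
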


\begin{proof}
Let $I$ be the set of maximal filters of $\mathbf{A}$. The filter $R := \bigcap \{M: M \in I\}$ is known as the radical of $\mathbf{A}$. It is well-known that $R = \{a \in A: 2a^n = 1 \text{ for every } n \in \mathbb{N}\}$. Suppose there is $a \in R$ with $a \ne 1$. Then, $2a^n = 1$ for every $n \in \mathbb{N}$. Using Lemma \ref{LEMA: prop de los cuantif en MMV}, we have that $1 = \forall 1 = \forall (2a^n) = 2(\forall a)^n$ for every $n \in \mathbb{N}$. Thus, $\forall a \in R$. Since $\forall a \leq a < 1$ and $\exists \mathbf{A}$ is a simple MV-algebra, there is $m \in \mathbb{N}$ such that $(\forall a)^m = 0$. Hence $1 = 2(\forall a)^m = 0$, a contradiction. This shows that $R = \{1\}$.

For each $M \in I$, let $g_M\colon \mathbf{A}/M \to [0,1]_\textrm{\L}$ be the unique embedding of the simple MV-algebra $\mathbf{A}/M$ into $[0,1]_\textrm{\L}$ (see \cite[Theorem 3.5.1 and Corollary 7.2.6]{CDM00}). Moreover, since $M \cap \exists A$ is a proper filter of the simple MV-algebra $\exists \mathbf{A}$, we have that $M \cap \exists A = \{1\}$. Letting $q_M$ be the restriction to $\exists A$ of the canonical embedding, we have that $g_M \circ q_M\colon \exists \mathbf{A} \to [0,1]_\textrm{\L}$ is an embedding. Since there is a unique embedding of $\exists \mathbf{A}$ into $[0,1]_\textrm{\L}$, we must have that $(g_M \circ q_M)(\exists a) = (g_{M'} \circ q_{M'})(\exists a)$, that is, $g_M((\exists a)/M) = g_{M'}((\exists a)/M')$ for every $a \in A$ and $M,M' \in I$, 

We define $f\colon \mathbf{A} \to [0,1]_\textrm{\L}^I$ by $f(a)(M) = g_M(a/M)$ for every $M \in I$. As $R = \{1\}$ the map $f$ is an embedding. In order to prove that $f\colon \langle \mathbf{A}, \exists, \forall\rangle \to \langle [0,1]_\textrm{\L}^I, \exists_\vee, \forall_\wedge\rangle$ is an embedding, it remains to show that $f(\exists a)(M) = \sup \{f(a)(M'): M' \in I\}$ and $f(\forall a)(M) = \inf \{f(a)(M'): M' \in I\}$ for every $a \in A$ and $M \in I$. By Lemma \ref{LEMA: prop de los cuantif en MMV}, we know that $\forall x = \neg \exists \neg x$ in every monadic MV-algebra, so it is enough to show the first of the previous equalities. Fix $a \in A$. Note that $(\exists a \imp a)^n \ne 0$ for every $n \in \mathbb{N}$. Indeed, if $(\exists a \imp a)^n = 0$, then, using Lemma \ref{LEMA: prop de los cuantif en MMV}, we get that $0 = \exists 0 = \exists (\exists a \imp a)^n = (\exists (\exists a \imp a))^n = 1$, a contradiction. Thus, there must exist $M^* \in I$ such that $\exists a \imp a \in M^*$. Since $a \leq \exists a$, we have that $a \to \exists a = 1 \in M^*$ as well; so $a/M^* = (\exists a)/M^*$. Thus, $f(a)(M^*) = g_{M^*}(a/M^*) = g_{M^*}((\exists a)/M^*) = f(\exists a)(M^*)$. In addition, for every $M' \in I$ we have that $a/M' \leq (\exists a)/M'$, so $f(a)(M') = g_{M'}(a/M') \leq g_{M'}((\exists a)/M') = g_{M^*}((\exists a)/M^*) = f(\exists a)(M^*)$. This shows that $\sup \{f(a)(M'): M' \in I\} = f(\exists a)(M^*) = f(\exists a)(M)$ for every $M \in I$.
\end{proof}

We are finally ready to prove the standard completeness theorem.

\begin{theorem} \label{TEO: completitud estandar infinitaria}
Let $\Gamma \cup \{\varphi\} \subseteq Fm$, then
\[
\Gamma \vdash \varphi  \text{ if and only if } \Gamma \vDash_{\mathrm{S5}([0,1]_\textrm{\rm \L})} \varphi.
\]
\end{theorem}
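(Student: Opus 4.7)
The soundness direction has already been established in Theorem \ref{TEO: strong standard soundness}, so it suffices to prove completeness: assuming $\Gamma \nvdash \varphi$, I would construct a safe structure $\mathbf{K}$ over $[0,1]_\textrm{\L}$ that is a model of $\Gamma$ but not of $\varphi$. The plan is a Lindenbaum--Tarski construction enhanced by the infinitary rule $\square$Inf, which forces the quotient algebra to be simple and hence, by the functional representation already at hand, embeddable into $\langle [0,1]_\textrm{\L}^I, \exists_\vee, \forall_\wedge\rangle$.

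First, apply Theorem \ref{TEO: prelinear extension} to extend $\Gamma$ to a $\square$-prelinear theory $\Gamma^*$ with $\Gamma^* \nvdash \varphi$; since $\Gamma^*$ is a theory, $\varphi \notin \Gamma^*$. Form the Lindenbaum--Tarski monadic MV-algebra $\mathbf{L}_{\Gamma^*}$ defined just before Lemma \ref{LEMA: MV-chain + arquim = simple}. By Lemma \ref{LEMA: el alg de Lindenbaum es monadica simple}, $\mathbf{L}_{\Gamma^*}$ is simple, and by Theorem \ref{TEO: embedding de monadicas simples en funcionales estandar} there is a set $I$ (the set of maximal filters of $\mathbf{L}_{\Gamma^*}$) and an embedding $f\colon \mathbf{L}_{\Gamma^*}\to \langle [0,1]_\textrm{\L}^I, \exists_\vee, \forall_\wedge\rangle$ of monadic MV-algebras.

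Next, define $\mathbf{K} := \langle I, e, [0,1]_\textrm{\L}\rangle$ by $e(x,p) := f([p]_{\Gamma^*})(x)$ for each $p \in Prop$ and $x \in I$. The key step is a truth lemma, proved by induction on the complexity of $\psi$: for every formula $\psi$ and every $x \in I$,
\[
\|\psi\|_{\mathbf{K},x} \;=\; f([\psi]_{\Gamma^*})(x).
\]
The propositional cases are immediate because $f$ preserves the MV-algebra operations; the modal cases use that $f$ commutes with $\exists$ and $\forall$ and that these operations in $[0,1]_\textrm{\L}^I$ are computed as pointwise supremum and infimum, exactly matching the definitions of $\|\lozenge\psi\|_{\mathbf{K},x}$ and $\|\square\psi\|_{\mathbf{K},x}$. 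In particular, the required suprema and infima always exist in $[0,1]_\textrm{\L}$, so $\mathbf{K}$ is safe.

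Finally, since $\Gamma \subseteq \Gamma^*$, every $\gamma \in \Gamma$ satisfies $[\gamma]_{\Gamma^*} = 1$, whence $\|\gamma\|_{\mathbf{K},x} = 1$ for all $x$ and $\mathbf{K}$ is a model of $\Gamma$; whereas $\varphi \notin \Gamma^*$ yields $[\varphi]_{\Gamma^*} \neq 1$, so $f([\varphi]_{\Gamma^*}) \neq 1$ in $[0,1]_\textrm{\L}^I$ and there is some $x \in I$ with $\|\varphi\|_{\mathbf{K},x} < 1$. This contradicts $\Gamma \vDash_{\mathrm{S5}([0,1]_\textrm{\L})} \varphi$ and finishes the proof. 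The conceptually hard step is not in this argument itself but in ensuring that $\mathbf{L}_{\Gamma^*}$ is simple, which is precisely where the infinitary rule $\square$Inf (via Lemma \ref{LEMA: MV-chain + arquim = simple}) is indispensable; the present theorem simply threads the prelinear extension, the simplicity of the Lindenbaum algebra, and the functional representation of simple monadic MV-algebras into a single canonical-model construction.
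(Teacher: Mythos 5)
Your proposal is correct and follows essentially the same route as the paper's own proof: extend $\Gamma$ to a $\square$-prelinear theory via Theorem \ref{TEO: prelinear extension}, invoke Lemma \ref{LEMA: el alg de Lindenbaum es monadica simple} for simplicity of $\mathbf{L}_{\Gamma^*}$, embed into $\langle [0,1]_\textrm{\L}^I, \exists_\vee, \forall_\wedge\rangle$ by Theorem \ref{TEO: embedding de monadicas simples en funcionales estandar}, and read off a countermodel through the truth lemma. The only difference is that you spell out the induction and the safety of $\mathbf{K}$ in slightly more detail than the paper does.
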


\begin{proof}
The soundness implication was already proved in \ref{TEO: strong standard soundness}. Suppose $\Gamma \nvdash \varphi$. By Theorem \ref{TEO: prelinear extension}, there is a $\square$-prealinear theory $\Gamma^*$ containing $\Gamma$ such that $\Gamma^* \nvdash \varphi$, and, by Lemma \ref{LEMA: el alg de Lindenbaum es monadica simple}, the $\mathbf{L}_{\Gamma^*}$ is a simple monadic MV-algebra. Using now Lemma {}, let $h\colon \mathbf{L}_{\Gamma^*} \to \langle [0,1]_\textrm{\L}^I, \exists_\vee, \forall_\wedge\rangle$ be an embedding for some nonempty set $I$. Define $\mathbf{K} := \langle I, e, [0,1]_\textrm{\L}\rangle$ where $e\colon I \times Prop \to [0,1]$ is defined by $e(i,p) := h([p]_{\Gamma^*})(i)$ for every $i \in I$ and $p \in Prop$. By induction on the structure of formulas it follows that $\|\psi\|_{\mathbf{K},i} = h([\psi]_{\Gamma^*})(i)$ for every $i \in I$ and $\psi \in Fm$. Thus, $\mathbf{K}$ is a safe model of $\Gamma^*$, and also of $\Gamma$. However, since $\Gamma^* \nvdash \varphi$, we have that $[\varphi]_{\Gamma^*} \ne 1$, so there is $i \in I$ such that $\|\varphi\|_{\mathbf{K},i} = h([\varphi]_{\Gamma^*})(i) \ne 1$. This shows that $\Gamma \nvDash_{\mathrm{S5}([0,1]_\textrm{\rm \L})} \varphi$.
\end{proof}

\subsection*{Alternative axiomatization of $\mathrm{S5}(\mathcal{L})_\infty$}

Consider the rule (this is the rule introduced by A. Ku\l acka in \cite{Kulacka18})
\begin{itemize}
\item[] Inf: $\displaystyle \underline{\varphi \vee (\alpha \imp \beta^n) \text{ for every } n \in \mathbb{N}} \atop \displaystyle {\varphi \vee (\alpha \imp \alpha * \beta)}$
\end{itemize}
Let $\mathrm{S5}(\mathcal{L})_\infty'$ be the extension of $\mathrm{S5}(\mathcal{L})$ by Inf.

\begin{theorem}
Let $\Gamma \cup \{\varphi\} \subseteq Fm$, then
\[
\Gamma \vdash_{\mathrm{S5}(\mathcal{L})_\infty} \varphi  \text{ if and only if } \Gamma \vdash_{\mathrm{S5}(\mathcal{L})_\infty'} \varphi.
\]
\end{theorem}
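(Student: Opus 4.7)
The plan is to show the two inclusions between the consequence relations separately, using the strong standard completeness theorem already proved for $\mathrm{S5}(\mathcal{L})_\infty$.

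For the forward direction $\Gamma \vdash_{\mathrm{S5}(\mathcal{L})_\infty} \varphi \Rightarrow \Gamma \vdash_{\mathrm{S5}(\mathcal{L})_\infty'} \varphi$, I would observe that every instance of $\square$Inf is already an instance of Inf: taking $\square \varphi$, $\square \alpha$, $\square \beta$ for the metavariables $\varphi, \alpha, \beta$ in Inf yields exactly the schema of $\square$Inf. A routine transfinite induction on the length of an $\mathrm{S5}(\mathcal{L})_\infty$-proof then shows that every $\mathrm{S5}(\mathcal{L})_\infty$-proof is already an $\mathrm{S5}(\mathcal{L})_\infty'$-proof.

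For the reverse direction $\Gamma \vdash_{\mathrm{S5}(\mathcal{L})_\infty'} \varphi \Rightarrow \Gamma \vdash_{\mathrm{S5}(\mathcal{L})_\infty} \varphi$, I would route the argument through standard semantics. The key step is the soundness of Inf with respect to $\mathrm{S5}([0,1]_\textrm{\L})$: given a safe structure $\mathbf{K} = \langle X, e, [0,1]_\textrm{\L}\rangle$ that models $\varphi \vee (\alpha \imp \beta^n)$ for every $n$, fix $x \in X$; if $\|\varphi\|_{\mathbf{K},x} = 1$ we are done, and otherwise $\|\alpha\|_{\mathbf{K},x} \leq \|\beta\|_{\mathbf{K},x}^n$ for all $n$. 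Since $[0,1]_\textrm{\L}$ is simple (the same argument used in the proof of Theorem \ref{TEO: strong standard soundness}), either $\|\beta\|_{\mathbf{K},x} = 1$, in which case $\|\alpha * \beta\|_{\mathbf{K},x} = \|\alpha\|_{\mathbf{K},x}$, or $\|\alpha\|_{\mathbf{K},x} = 0$, in which case $\|\alpha * \beta\|_{\mathbf{K},x} = 0$; either way $\|\alpha \imp \alpha * \beta\|_{\mathbf{K},x} = 1$, so the conclusion of Inf is forced. Combined with the soundness of all other axioms and rules of $\mathrm{S5}(\mathcal{L})_\infty'$ (which are those of $\mathrm{S5}(\mathcal{L})$, already handled in Theorem \ref{TEO: strong standard soundness}), a transfinite induction on proofs shows $\Gamma \vdash_{\mathrm{S5}(\mathcal{L})_\infty'} \varphi$ implies $\Gamma \vDash_{\mathrm{S5}([0,1]_\textrm{\L})} \varphi$. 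An application of Theorem \ref{TEO: completitud estandar infinitaria} then yields $\Gamma \vdash_{\mathrm{S5}(\mathcal{L})_\infty} \varphi$.

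There is no substantial obstacle here: the only delicate point is that the soundness of Inf truly requires the simplicity of the chain of truth values and would fail over arbitrary totally ordered MV-algebras, but this is precisely what the standard algebra $[0,1]_\textrm{\L}$ provides. In essence, passing through standard semantics is what lets us absorb the apparently stronger rule Inf into the weaker rule $\square$Inf.
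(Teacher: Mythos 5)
Your proposal is correct and follows essentially the same route as the paper: the forward direction is immediate because every instance of $\square$Inf is an instance of Inf, and the converse goes through soundness of Inf with respect to $\mathrm{S5}([0,1]_\textrm{\L})$ (using the simplicity of $[0,1]_\textrm{\L}$, exactly as in Theorem \ref{TEO: strong standard soundness}) followed by an application of Theorem \ref{TEO: completitud estandar infinitaria}. The only difference is that you spell out the soundness argument for Inf explicitly, whereas the paper just refers back to the proof of Theorem \ref{TEO: strong standard soundness}.
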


\begin{proof}
Since every instance of $\square$Inf is an instance of Inf, the forward implication is trivial. For the converse, suppose $\Gamma \vdash_{\mathrm{S5}(\mathcal{L})_\infty'} \varphi$. Exactly as in the proof of Theorem \ref{TEO: strong standard soundness}, we get that $\Gamma \vDash_{\mathrm{S5}([0,1]_\textrm{\L})} \varphi$. Finally, by Theorem \ref{TEO: completitud estandar infinitaria} we get that $\Gamma \vdash_{\mathrm{S5}(\mathcal{L})_\infty} \varphi$.
\end{proof}

\subsection*{The bounded universe case}

Given a natural number $k$, let $\mathrm{S5}_k(\mathcal{L})_\infty$ be the axiomatic extension of $\mathrm{S5}(\mathcal{L})_\infty$ by the axiom $(W_k)$. As an extension of the previous results we claim that $\mathrm{S5}_k(\mathcal{L})_\infty$ is strongly complete with respect to $\mathrm{S5}_k([0,1]_\textrm{\L})$. We start with the following basic result.

\begin{lemma}
Let $\langle \mathbf{A}, \exists, \forall\rangle$ be a FSI monadic MV-algebra of width $\leq k$. Then, $\mathbf{A}$ has at most $k$ maximal filters.
\end{lemma}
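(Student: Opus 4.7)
The plan is to invoke the functional representation of Theorem~\ref{TEO: FSI de ancho k son k funcionales} to embed $\mathbf{A}$ into a finite power of a totally ordered MV-algebra, and then argue that every maximal filter of $\mathbf{A}$ pulls back from one of the (few) maximal filters of that power.

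Let $r \leq k$ be the width of $\mathbf{A}$. By Theorem~\ref{TEO: FSI de ancho k son k funcionales} there is a totally ordered MV-algebra $\mathbf{V}$ and an embedding $\alpha\colon \mathbf{A} \hookrightarrow \mathbf{V}^r$. Since $\mathbf{V}$ is a chain, its proper filters are linearly ordered, so $\mathbf{V}$ has a unique maximal filter $M_\mathbf{V}$. I would then verify that any filter $F$ of $\mathbf{V}^r$ decomposes as a product $F = F_1 \times \cdots \times F_r$ with $F_i := \{v \in V : (1,\ldots,v,\ldots,1) \in F\}$ (one inclusion uses upward closedness applied to $(a_1,\ldots,a_r) \leq (1,\ldots,a_i,\ldots,1)$; the other uses that MV-algebra filters are closed under $\wedge$). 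From this decomposition it follows that $\mathbf{V}^r$ has exactly $r$ maximal filters, namely $\tilde M_i := \pi_i^{-1}(M_\mathbf{V})$ for $i = 1, \ldots, r$, where $\pi_i$ is the $i$-th projection.

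The central step is to show that every maximal filter $M$ of $\mathbf{A}$ takes the form $\alpha^{-1}(\tilde M_i)$ for some $i$. I would form the filter $F$ generated by $\alpha(M)$ in $\mathbf{V}^r$; since $M$ is closed under $*$, this $F$ coincides with the upward closure $\{v \in V^r : v \geq \alpha(m) \text{ for some } m \in M\}$. Then $F$ is proper (otherwise $\alpha(m) = 0$ for some $m \in M$, forcing the contradictory $m = 0 \in M$), and $\alpha^{-1}(F) = M$ because $\alpha$ is an order embedding and $M$ is upward closed. Extending $F$ via Zorn's lemma to a maximal filter $\tilde M$ of $\mathbf{V}^r$, the preimage $\alpha^{-1}(\tilde M)$ is a proper filter of $\mathbf{A}$ containing $M$, so equals $M$ by maximality. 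Consequently, $M$ coincides with one of the $\alpha^{-1}(\tilde M_i)$, and $\mathbf{A}$ has at most $r \leq k$ maximal filters.

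The main obstacle is the explicit product decomposition of filters of $\mathbf{V}^r$ (which pins down the fact that there are exactly $r$ maximal filters) together with the Zorn extension and the verification that the preimage of the extended filter is still $M$; both steps are routine but deserve to be written out carefully.
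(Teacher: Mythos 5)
Your proof is correct, but it reaches the conclusion by a different (and heavier) route than the paper. The paper's proof is a one-liner: by Theorem~\ref{TEO: equivalencias para ancho k}(3) there are prime filters $P_1,\dots,P_r$ of $\mathbf{A}$, $r\le k$, with $\bigcap_{i=1}^r P_i=\{1\}$, so $\mathbf{A}$ is a subdirect product of at most $k$ MV-chains, and ``the lemma follows.'' The counting step left implicit there is purely filter-theoretic: every maximal filter $M$ is prime, and a prime filter containing the finite intersection $\bigcap_i P_i$ must contain some $P_i$ (otherwise pick $a_i\in P_i\setminus M$ for each $i$; then $a_1\vee\dots\vee a_r\in\bigcap_i P_i\subseteq M$, and primeness forces some $a_i\in M$); since the filters above a prime filter of an MV-algebra form a chain, each $P_i$ lies below a unique maximal filter, giving at most $r$ maximal filters in total. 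You instead invoke Theorem~\ref{TEO: FSI de ancho k son k funcionales} to embed $\mathbf{A}$ into a single power $\mathbf{V}^r$ and count maximal filters there via the product decomposition of filters plus a Zorn pull-back. That is sound --- the decomposition $F=F_1\times\dots\times F_r$, the properness of the filter generated by $\alpha(M)$, and the identity $\alpha^{-1}(\tilde M)=M$ all go through exactly as you sketch --- but it imports more than is needed: Theorem~\ref{TEO: FSI de ancho k son k funcionales} rests on Lemma~\ref{LEMA: ampliacion de filtros de Rutledge} and the amalgamation property for MV-chains, none of which is required for this lemma. What your version buys in exchange is that it actually writes out the maximal-filter count that the paper's proof leaves entirely to the reader.
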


\begin{proof}
From Theorem \ref{TEO: equivalencias para ancho k}, $\mathbf{A}$ is a subdirect product of at most $k$ MV-chains. Therefore, the lemma follows.
\end{proof}

As a consequence of this lemma and Theorem \ref{TEO: embedding de monadicas simples en funcionales estandar} we get the following result.

\begin{corollary} \label{CORO: embedding de simples de ancho k en funcionales estandar de ancho k}
Let $\langle \mathbf{A}, \exists, \forall\rangle$ be a simple monadic MV-algebra of width $\leq k$. Then, there is an embedding from $\langle \mathbf{A}, \exists, \forall\rangle$ into $\langle [0,1]_\textrm{\rm \L}^r, \exists_\vee, \forall_\wedge\rangle$ for some natural number $r \leq k$.
\end{corollary}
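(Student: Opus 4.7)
The plan is to combine the preceding lemma with Theorem \ref{TEO: embedding de monadicas simples en funcionales estandar} directly; essentially no new ideas are needed. The key observation is that a simple monadic MV-algebra is in particular FSI (by Lemma \ref{LEMA: props basicas}, since simplicity of $\exists \mathbf{A}$ implies its total order).

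First, I would note that since $\langle \mathbf{A}, \exists, \forall\rangle$ is simple and has width $\leq k$, the preceding lemma applies and yields that the set $I$ of maximal filters of $\mathbf{A}$ is finite with $|I| = r \leq k$. Then Theorem \ref{TEO: embedding de monadicas simples en funcionales estandar} supplies an embedding
\[
f\colon \langle \mathbf{A}, \exists, \forall\rangle \hookrightarrow \langle [0,1]_\textrm{\L}^I, \exists_\vee, \forall_\wedge\rangle,
\]
where the index set is exactly $I$. Since $|I| = r$, identifying $[0,1]_\textrm{\L}^I$ with $[0,1]_\textrm{\L}^r$ via any bijection $I \to \{1,\ldots,r\}$ transports $f$ to an embedding into $\langle [0,1]_\textrm{\L}^r, \exists_\vee, \forall_\wedge\rangle$, which is the desired conclusion.

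There is no real obstacle here; the only slightly subtle point is making sure that the bijection $I \to \{1,\ldots,r\}$ induces an isomorphism between $\langle [0,1]_\textrm{\L}^I, \exists_\vee, \forall_\wedge\rangle$ and $\langle [0,1]_\textrm{\L}^r, \exists_\vee, \forall_\wedge\rangle$ that respects $\exists_\vee$ and $\forall_\wedge$. This is immediate because both operations are defined by taking the supremum or infimum over the whole index set, and these values are invariant under reindexing by a bijection (as already observed in Example \ref{EJ: canonical algebras}).
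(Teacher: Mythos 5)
Your proposal is correct and matches the paper exactly: the paper presents this corollary as an immediate consequence of the preceding lemma (an FSI algebra of width $\leq k$ has at most $k$ maximal filters) together with Theorem \ref{TEO: embedding de monadicas simples en funcionales estandar}, which is precisely the combination you use. Your extra remarks --- that simplicity implies FSI via Lemma \ref{LEMA: props basicas}, and that reindexing by a bijection preserves $\exists_\vee$ and $\forall_\wedge$ --- are exactly the routine details the paper leaves implicit.
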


We are now ready to prove the completeness result for the bounded universe case.

\begin{theorem} \label{TEO: strong standard completenes - width k}
Let $\Gamma \cup \{\varphi\} \subseteq Fm$, then
\[
\Gamma \vdash_{\mathrm{S5}_k(\mathcal{L})_\infty} \varphi  \text{ if and only if } \Gamma \vDash_{\mathrm{S5}_k([0,1]_\textrm{\rm \L})} \varphi.
\]
\end{theorem}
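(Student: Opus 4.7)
The plan is to mimic the proof of Theorem~\ref{TEO: completitud estandar infinitaria} but feed in the width-bounded ingredients already established. For soundness, I would argue that each axiom and rule of $\mathrm{S5}_k(\mathcal{L})_\infty$ is sound with respect to $\mathrm{S5}_k([0,1]_\textrm{\L})$: the axioms and rules inherited from $\mathrm{S5}(\mathcal{L})$ are sound by the soundness half of Theorem~\ref{TEO: finite strong completeness - width k}; the axiom $W_k$ is sound on structures of size $\leq k$ by the verification carried out in the proof of Theorem~\ref{TEO: finite strong completeness - width k}; and the rule $\square$Inf is sound on the standard \L ukasiewicz chain by the argument reproduced in Theorem~\ref{TEO: strong standard soundness} (which only uses that $[0,1]_\textrm{\L}$ is simple and nothing about the size of the universe).

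For the converse direction, assume $\Gamma \nvdash_{\mathrm{S5}_k(\mathcal{L})_\infty} \varphi$. I would first observe that the construction of Theorem~\ref{TEO: prelinear extension} goes through verbatim in the presence of $W_k$, because the proof only uses properties of the underlying calculus that are preserved by adding an axiom schema (instances of $W_k$ are just extra axioms, handled as in the ``instance of an axiom'' case). This yields a $\square$-prelinear theory $\Gamma^* \supseteq \Gamma$ of $\mathrm{S5}_k(\mathcal{L})_\infty$ with $\Gamma^* \nvdash \varphi$.

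Next, I would form the Lindenbaum--Tarski algebra $\mathbf{L}_{\Gamma^*}$ exactly as in Section~3. By Lemma~\ref{LEMA: el alg de Lindenbaum es monadica simple}, it is a simple monadic MV-algebra; since every theorem of $\mathrm{S5}_k(\mathcal{L})_\infty$ includes the instances of $W_k$, the algebra $\mathbf{L}_{\Gamma^*}$ belongs to $\mathbb{MMV}_k$, i.e.\ it has width $\leq k$. Being simple it is in particular FSI, so Corollary~\ref{CORO: embedding de simples de ancho k en funcionales estandar de ancho k} supplies an embedding $h\colon \mathbf{L}_{\Gamma^*} \to \langle [0,1]_\textrm{\L}^r, \exists_\vee, \forall_\wedge\rangle$ for some $r \leq k$.

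Finally, I would extract the countermodel exactly as in Theorem~\ref{TEO: completitud estandar infinitaria}: set $\mathbf{K} := \langle \{1,\dots,r\}, e, [0,1]_\textrm{\L}\rangle$ with $e(i,p) := h([p]_{\Gamma^*})(i)$, verify by induction on formulas that $\|\psi\|_{\mathbf{K},i} = h([\psi]_{\Gamma^*})(i)$, and conclude that $\mathbf{K}$ is a safe model of $\Gamma$ but not of $\varphi$. Since $r \leq k$, $\mathbf{K}$ is an admissible structure for $\mathrm{S5}_k([0,1]_\textrm{\L})$, giving $\Gamma \nvDash_{\mathrm{S5}_k([0,1]_\textrm{\L})} \varphi$. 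The only genuinely new ingredient is the width control on $|X|$, and this is handled cleanly by Corollary~\ref{CORO: embedding de simples de ancho k en funcionales estandar de ancho k}; the main thing to double-check is that the Lindenbaum construction and the prelinear extension from Theorem~\ref{TEO: prelinear extension} are robust under the addition of the schema $W_k$, which they are since $W_k$ enters only as further instances of axioms.
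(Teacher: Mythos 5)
Your proposal is correct and follows essentially the same route as the paper's own proof: soundness by combining the $W_k$ verification from Theorem~\ref{TEO: finite strong completeness - width k} with the $\square$Inf argument from Theorem~\ref{TEO: strong standard soundness}, then the prelinear extension of Theorem~\ref{TEO: prelinear extension} (which survives the addition of the axiom schema), Lemma~\ref{LEMA: el alg de Lindenbaum es monadica simple} to get a simple Lindenbaum algebra of width $\leq k$, Corollary~\ref{CORO: embedding de simples de ancho k en funcionales estandar de ancho k} for the embedding into $[0,1]_\textrm{\L}^r$ with $r \leq k$, and the countermodel extraction from Theorem~\ref{TEO: completitud estandar infinitaria}. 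The extra detail you supply (why the prelinear extension is robust under adding $W_k$, and the simple-implies-FSI bridge) is consistent with what the paper leaves implicit.
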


\begin{proof}
The soundness implication follows as in the proofs of Theorems \ref{TEO: finite strong completeness - width k} and \ref{TEO: strong standard soundness}. For the completeness implication, suppose $\Gamma \nvdash_{\mathrm{S5}_k(\mathcal{L})_\infty} \varphi$. Now, observe that Theorem \ref{TEO: prelinear extension} and the lemmas needed in its proof are also valid for the axiomatic extension $\mathrm{S5}_k(\mathcal{L})_\infty$. Thus, there is a $\square$-prelinear theory $\Gamma^*$ of $\mathrm{S5}_k(\mathcal{L}_\infty)$ extending $\Gamma$ such that $\Gamma^* \nvdash_{\mathrm{S5}_k(\mathcal{L}_\infty)} \varphi$. Hence, by Lemma \ref{LEMA: el alg de Lindenbaum es monadica simple} and the fact that $\Gamma^*$ is a theory of $\mathrm{S5}_k(\mathcal{L}_\infty)$, we conclude that $\mathbf{L}_{\Gamma^*}$ is a simple monadic MV-algebra satisfying the equation $(W_k)$, that is, $\mathbf{L}_{\Gamma^*}$ has width $\leq k$. Thus, by Corollary \ref{CORO: embedding de simples de ancho k en funcionales estandar de ancho k}, there is an embedding $h\colon \mathbf{L}_{\Gamma^*} \to \langle [0,1]_\textrm{\rm L}^r, \exists_\vee, \forall_\wedge\rangle$ for some natural number $r \leq k$. Finally, we proceed as in the proof of Theorem \ref{TEO: completitud estandar infinitaria} and define a model $\mathbf{K}$ of $\Gamma^*$ whose universe has $r$ elements that is not a model of $\varphi$.
\end{proof}

\section*{Acknowledgements}
We would like to thank the support of CONICET (PIP 11220170100195CO) and Departamento de Matemática (UNS) (PGI 24/L108).

\bibliographystyle{plain}

\

\

\noindent \textsc{D. Castaño} \\
Departamento de Matemática (Universidad Nacional del Sur) \\
Insituto de Matemática (INMABB) - UNS-CONICET \\
Bahía Blanca, Argentina \\
diego.castano@uns.edu.ar

\

\noindent \textsc{J. P. Díaz Varela} \\
Departamento de Matemática (Universidad Nacional del Sur) \\
Insituto de Matemática (INMABB) - UNS-CONICET \\
Bahía Blanca, Argentina \\
usdiavar@criba.edu.ar

\

\noindent \textsc{G. Savoy} \\
Departamento de Matemática (Universidad Nacional del Sur) \\
Insituto de Matemática (INMABB) - UNS-CONICET \\
Bahía Blanca, Argentina \\
gabriel\_savoy@hotmail.com

\end{document}